\documentclass[hyperref]{ctexart}

\usepackage[english]{babel}

\usepackage[letterpaper,top=2cm,bottom=2cm,left=3cm,right=3cm,marginparwidth=1.75cm]{geometry}

\usepackage{helvet}
\usepackage{booktabs}
\usepackage{amsthm}
\usepackage[titletoc]{appendix}
\usepackage{float}
\usepackage{amsmath}
\allowdisplaybreaks
\usepackage{graphicx}
\usepackage{amssymb}
\usepackage{subcaption}
\usepackage{multirow}
\usepackage{algorithmic}
\usepackage{tikz}
\usepackage{authblk}
\usepackage{placeins} 
\usepackage{afterpage} 

\usepackage[ruled,vlined]{algorithm2e}  % 算法包
\newtheorem{thm}{\textbf{Theorem}}
\newtheorem{lem}{\textbf{Lemma}}

\newtheorem{rmk}{\textbf{Remark}}
\newtheorem{assumption}{Assumption}
\usepackage[colorlinks=true, allcolors=blue]{hyperref}

\title{Bridging Scales in Chemotaxis: Scale-Uniform Forward Stability for Run-and-Tumble Kernel Estimation}
\author[1]{Jos\'e A. Carrillo}
\author[2]{Jiangjun Ma}
\author[3]{Min Tang}

\affil[1]{Mathematical Institute, University of Oxford, Woodstock Road, Oxford, OX2 6GG, UK. (jose.carrillo@maths.ox.ac.uk)}
\affil[2]{School of Mathematical Sciences, Institute of Natural Sciences, Shanghai Jiao
Tong University, Shanghai, P.R. China. (dreamheaven@sjtu.edu.cn).
}
\affil[3]{School of Mathematical Sciences, Institute of Natural Sciences and MOE-LSC, Shanghai Jiao
Tong University, Shanghai, P.R. China. (tangmin@sjtu.edu.cn).
}

\begin{document}
\maketitle

\begin{abstract}
Chemotactic motion is described by run-and-tumble kinetic models at microscopic scales and by Keller--Segel equations at macroscopic scales. We develop variational loss functionals for estimating the two components \(T_0(x)\) and \(T_1(x)\) of a turning kernel \(T_{\epsilon}=T_0+\epsilon T_1\), where $T_0$ determines the leading-order turning rate and diffusion, while $T_1$ governs the macroscopic chemotactic drift. Under suitable regularity and data-informativeness assumptions, we establish conditional scale-uniform forward-stability estimates showing that a small loss leads to a small discrepancy between the forward solutions generated by the true and estimated kernels across the kinetic and diffusive regimes. Combined with sparse inversion, the method accurately recovers smooth, nonsmooth, and strongly heterogeneous kernels and remains robust under measurement noise.
\end{abstract}
\section{Introduction}

Chemotaxis, the directed migration of cells or organisms in response to chemical gradients \cite{willard2006signaling}, is a fundamental biological process observed across a wide range of species, from unicellular bacteria to multicellular organisms \cite{willard2006signaling, de2016neutrophil}.
A widely adopted mathematical model of chemotaxis was introduced by Keller and Segel \cite{keller1970initiation, keller1971model, keller1971traveling}, which describes the time evolution of cell density $\rho(x,t)$ via the following equation:
\begin{equation}\label{Keller_Segel}
        \partial_t \rho + \nabla \cdot (\bar\Gamma(S,\nabla S) \rho - \bar D(S,\nabla S)\cdot \nabla \rho) = 0. 
\end{equation}
Here, $S(x,t)$ denotes the concentration of the chemical attractant. The model accounts for the cellular response to the chemical stimulus through a drift coefficient $\bar\Gamma(S,\nabla S)$ and a diffusion coefficient $\bar D(S,\nabla S)$, both of which depend in a generally nonlinear manner on $S$ and its spatial gradient $\nabla S$.  

Experimental studies---such as the pioneering work by Berg \cite{berg1972chemotaxis} tracking \textit{E. coli} trajectories---have shown that individual bacterial movement alternates between two distinct states: a ``run,'' characterized by straight-line swimming at velocity \( v \), and a ``tumble,'' which involves a random reorientation that results in a new velocity \( v' \). To describe this behavior, run-and-tumble kinetic models have been developed \cite{stroock1974some,alt1980biased,othmer1988models}. These mesoscopic models capture chemotactic dynamics through the following transport equation:
\begin{equation}\label{kinetic_theory}
    \partial_t f + v\cdot\nabla_x f = \int_V \bigl[\bar T(v',v,S,\nabla S)f' -\bar T(v,v',S,\nabla S)f \bigr]dv',
\end{equation}
where \( f(x,t,v) \) represents the density of cells at position \( x \in \mathbb{R}^3 \), moving with velocity \( v \in V \subseteq \mathbb{S}^2 \) at time \( t \geq 0 \), and \( f' := f(x,t,v') \). The tumbling kernel \( \bar T(v',v,S,\nabla S) \) gives the probability that cells moving with velocity $v'$ reorientate to velocity $v$. Thus $\int_V\bar T(v,v',S,\nabla S)dv'$ encodes the reorientation rate of cells moving with velocity $v$, which may depend on the local chemoattractant concentration \( S \) and its spatial gradient.

The Keller--Segel model \eqref{Keller_Segel} and the kinetic model \eqref{kinetic_theory} describe chemotaxis behavior at different temporal and spatial scales. Changes in cell density occur over time scales much longer than the typical run duration of a bacterium. For \textit{E. coli}, for instance, runs last roughly one second, while experimental observations of the density evolution often extend over several minutes \cite{patteson2015running, othmer2000diffusion}. To connect these scales, one may apply a parabolic scaling to the kinetic equation by introducing the rescaled coordinates \( \tau = \epsilon^2 t \) and \( \xi = \epsilon x \), yielding:
\begin{equation}\label{kinetic_theory_diffusion}
    \partial_{\tau} f_{\epsilon} + \frac{1}{\epsilon} v\cdot\nabla_{\xi} f_{\epsilon} = \frac{1}{\epsilon^2}\int_V \bigl[ \bar T_{\epsilon}(v',v,S,\nabla S)f'_{\epsilon} - \bar T_{\epsilon}(v,v',S,\nabla S)f_{\epsilon} \bigr]dv'.
\end{equation}
According to \cite[Lemma 1]{chalub2004kinetic}, one can further assume that  the tumbling kernel expands as 
\begin{equation} \label{eq:T0T1bar}\bar T_{\epsilon}(v',v,S,\nabla S) = \bar T_0 (v',v,S,\nabla S)+ \epsilon \bar T_{1} (v',v,S,\nabla S),
\end{equation} 
where  $\bar T_0(-v,v',\cdot) = \bar T_0(v,-v',\cdot)$, $\bar T_1(-v,v',\cdot) = -\bar T_1(v,-v',\cdot)$, which means $T_0$ is symmetric and $T_1$ is antisymmetric. When $\bar T_0(v',v,\cdot), \bar T_1(v',v,\cdot)=O(1)$, in the limit \( \epsilon \ll 1 \), the bacterial density \( \rho_\epsilon(x,t) := \int_V f_\epsilon(x,t,v) dv \) converges to a solution of the Keller--Segel equation \eqref{Keller_Segel} \cite{othmer2000diffusion, chalub2004kinetic, othmer2002diffusion}. In the asymptotic expansion \eqref{eq:T0T1bar}, the symmetric part $\bar T_0$ governs the effective diffusion coefficient $\bar D(S,\nabla S)$, while the antisymmetric correction $\bar T_1$ determines the advection coefficient $\bar\Gamma(S,\nabla S)$.
Nevertheless, it has been noted in \cite{saragosti2011directional} that certain experimental observations are better explained by the detailed kinetic description \eqref{kinetic_theory} than by the macroscopic Keller--Segel approximation, highlighting the complementary roles of these two modeling frameworks.

A critically important issue of using models \eqref{Keller_Segel} and \eqref{kinetic_theory} to fit the experimental data is that their coefficients---such as $\bar D$, $\bar\Gamma$, and $\bar T$---depend on both the cell species and the types of chemoattractant. 
 In practice, explicit forms for $\bar D$, $\bar\Gamma$, and $\bar T$ are known only for a few well-studied bacteria, such as  the complete proteome of \textit{E. coli} \cite{kalwarczyk2012biologistics}. In most cases, however, how $\bar D$, $\bar\Gamma$, and $\bar T$ depend on $(S, \nabla S)$ remain unknown.
The first way to determine the functional dependence of $\bar D$, $\bar\Gamma$, and $\bar T$ on $(S, \nabla S)$ is from a bottom up approach starting from the chemotactic signaling-pathway of individual cells as have been done for E.coli chemotaxis \cite{xue2021individual, xue2025crossover}. However, for general microorganisms, when the signaling pathway is not known, one can only determined them empirically. For example, in \cite{giometto2015generalized}, by relating $\bar \Gamma$ to the macroscopic steady-state bacterial density, the functional form of $\bar\Gamma$ is determined through a systematic model selection process based on the Akaike Information Criterion (AIC) and data fitting.

To determined $\bar D$, $\bar\Gamma$, and $\bar T$ empirically, the first step is to find their spatial dependence given a fixed chemical concentration field \( S(x) \).
 Recovering a spatially dependent tumbling kernel \(  T(x, v', v) = \bar T(v', v, S, \nabla S) \) from the time evolution of the probability density function \( f(x, t, v) \) would require simultaneously tracking the trajectories of many individual cells and performing the necessary statistical analysis. On the other hand, if the time evolution of cell density can be modeled by the Keller--Segel model, one must recover the effective drift and diffusion fields \( \Gamma(x) = \bar\Gamma(S, \nabla S) \) and \(  D(x) =\bar D(S, \nabla S) \) from measurements of the density evolution \( \rho(x, t) \), without the need to track individual cells. In both scenarios, PDE-constrained optimization is a widely adopted framework for solving such inverse problems.

For the Keller--Segel model \eqref{Keller_Segel}, PDE-constrained optimization has been employed in \cite{karalashvili2011identification,fister2008identification} to determine \(D(x)\) and \(\Gamma(x)\). In \cite{hellmuth2025reconstructing}, macroscopic particle density information is leveraged to reconstruct the tumbling kernel based on \eqref{kinetic_theory}. However, for a given species and chemical environment \(S(x)\), the appropriate movement description may shift between microscopic (kinetic) and macroscopic (Keller--Segel) scales depending on the magnitude of the chemical gradient \(|\nabla S|\) or when \(f(x,t,v)\) is far from local equilibrium. In some regions, the detailed run-and-tumble model \eqref{kinetic_theory} may be necessary, while in others, the macroscopic Keller--Segel approximation \eqref{Keller_Segel} may suffice. It is difficult to establish a quantitative criterion for determining where the kinetic description should be retained and where the Keller--Segel approximation is valid; there need not be a sharply defined physical interface between the two regimes.

The core multiscale challenge, can be approached from two complementary perspectives. The first is an identifiability problem: one may ask whether the leading-order turning component \(\bar{T}_0\) and the anisotropic correction \(\bar{T}_1\) can be uniquely and uniformly recovered from observations of \(f(x,t,v)\) as \(\epsilon\) varies. The second is a predictive-consistency problem: using the experimentally available density and flux data, one may instead seek admissible candidate components that reproduce the observed dynamics across both the kinetic and macroscopic regimes, without requiring or establishing uniqueness of the underlying turning kernel. The present work follows the second perspective, focusing on loss functionals that select candidate kernels whose induced density and flux fields remain consistent with the observations across scales.

Inverse problems involving kinetic equations and their diffusion-type limit equations have been studied previously in the context of optical tomography and photoacoustic tomography. Depending on the optical properties of the material, when the mean free path $\epsilon$ of the particle becomes small, the model equation shifts from a transport equation to a diffusion-type model. Since $\epsilon$ is not known in general, an important aspect is stability as $\epsilon\rightarrow0$. Optical tomography needs to reconstruct the internal optical properties of an object using non-invasive boundary measurements \cite{bal2008inverse, lai2019inverse}. As noted in \cite{chen2018stability}, for inverse problems in optical tomography, the reconstruction error is amplified as $\epsilon\to 0$ and the ill-posedness in the diffusive limit is investigated. Photoacoustic tomography reconstructs internal optical properties from interior measurements \cite{bal2010inverse, lai2022inverse}. When such interior data are available, both the transport model and its diffusion‑limit formulation yield theoretically stable reconstructions in photoacoustic imaging \cite{bal2010inverse, lai2022inverse}.
 Furthermore, multiscale inverse problems solved via Bayesian inference have become an active research area in recent years. In \cite{hellmuth2021multiscale}, the authors show that as the parameter $\epsilon$ tends to zero, the two Bayesian posterior distributions—one derived from the microscopic chemotaxis kinetic equation and the other from the macroscopic Keller–Segel model—converge uniformly under suitable metrics. However, the construction of data-driven loss functionals whose induced forward predictions remain stable across the kinetic-to-diffusive transition has received comparatively little attention.

 When the turning kernel is independent of the post-tumble velocity, the expansion reduces to
\[
T_{\epsilon}(v',x) = T_0(x) + \epsilon v' T_1(x).
\]
The objective of this work is to construct practically implementable variational loss functionals for estimating one of the two kernel components when the other is known. Because \(T_0\) and \(T_1\) enter the kinetic equation at different asymptotic orders, their effects on the observed density and flux are expressed on different spatial and temporal scales. Rather than proving unique identifiability of \(T_0\) and \(T_1\), we investigate a conditional loss-to-solution stability property. We design loss functionals such that, under suitable regularity and data-informativeness assumptions, a small loss implies that the forward solution generated by the estimated component remains close to that generated by the true component. By combining kinetic estimates with estimates for the Keller--Segel limit, we obtain a conditional scale-uniform stability argument across the kinetic-to-diffusive transition. 

The resulting framework estimates \(T_1\) given \(T_0\), or \(T_0\) given \(T_1\), from observed density and flux data. Distinct kernel components may produce identical or nearly identical forward dynamics, so the present results do not establish uniqueness of the recovered parameters. Nevertheless, the proposed losses provide a principled way to select kernel candidates with accurate predictive behavior. Numerical experiments combined with sparse inversion illustrate the reconstruction of several dictionary-sparse smooth, nonsmooth, and strongly heterogeneous kernels over a range of constant values of \(\epsilon\), including tests with synthetic measurement noise.

The remainder of this paper is organized as follows. In Section \ref{sec:Model and the Loss Function}, we introduce the mathematical setup of the inverse problem under consideration and propose two loss functions for recovering \(T_0(x)\) and \(T_1(x)\), respectively. In Section \ref{sec:Stability Analysis}, we present the main theoretical results along with a formal argument for uniform stability. The proofs of the main theorems are provided in Section 4. In Section \ref{sec:Numerical results}, we present numerical examples demonstrating the performance of the proposed loss functions. Finally, in Section \ref{sec:Discussion}, we conclude with a discussion and offer future perspectives.

\section{Model and the Loss Function}
\label{sec:Model and the Loss Function}
\subsection{The two flux model}

The full kinetic model \eqref{kinetic_theory_diffusion} is defined for \((x,v) \in \mathbb{R}^3 \times \mathbb{S}^2\). Because microfluidic experiments are often carried out in narrow, elongated channels that effectively confine bacterial motion to quasi-one-dimensional geometries, the two-velocity model has become a standard simplification in kinetic descriptions of chemotaxis. \cite{saragosti2011directional,si2014pathway}. In the two velocity model, the bacteria movement can be considered restricted to one dimension, then \((x,v) \in \mathbb{R} \times \{-1, +1\}\). For a fixed chemical concentration \(S(x)\) inside the channel, one can then observe and analyze bacterial migration behavior in response to the stimulus \cite{ahmed2008experimental,perez2022microfluidic}. After introducing parabolic scaling, the simplified one-dimensional two-flux kinetic model for chemotaxis is written as
\begin{equation*}
    \begin{aligned}
    \epsilon^2 \partial_t f^+ + \epsilon v\partial_x f^+ &= T_{\epsilon}(-v,v,S,\nabla S)f^- - T_{\epsilon}(v,-v,S,\nabla S)f^+,\\
    \epsilon^2 \partial_t f^- - \epsilon v\partial_x f^- &= T_{\epsilon}(v,-v,S,\nabla S)f^+ - T_{\epsilon}(-v,v,S,\nabla S)f^-,
    \end{aligned} 
\end{equation*}
with the boundary conditions
\begin{equation}\label{boundarycond}
\lim_{|x|\to \infty} f^{\pm}(x,t) = 0,\quad \forall t\geq 0.
\end{equation}
Here, \(f^\pm(x,t)\) denotes the number of bacteria moving with constant velocity \(\pm v\), where \(v\) is a positive scalar. Initial data are prescribed at \(t = 0\) such that \(f^\pm(x,0) = \phi^\pm(x)\). Furthermore, we assume that the turning kernel admits an asymptotic expansion of the form given in \eqref{eq:T0T1bar}.

For simplicity, we consider a fixed \(S(x)\), so that \(T_0\) and \(T_1\) depend only on the spatial variable \(x\). The two-flux model then becomes
\begin{equation}\label{twoflux}
    \begin{aligned}
    \epsilon^2 \partial_t f^+ + \epsilon v\partial_x f^+ &= (T_0(x)-\epsilon T_1(x))f^- - (T_0(x)+\epsilon T_1(x))f^+,\\
    \epsilon^2 \partial_t f^- - \epsilon v\partial_x f^- &= (T_0(x)+\epsilon T_1(x))f^+ - (T_0(x)-\epsilon T_1(x))f^-,
    \end{aligned} 
\end{equation}
where the form follows from the symmetry of $T_0$ and antisymmetry of $T_1$.
Here, \(T_0(x) > 0\) and \(T_1(x) \in \mathbb{R}\), with the physical constraints \(T_0(x) \pm \epsilon T_1(x) > 0\).

Let $$\rho(x,t)=f^+(x,t)+f^-(x,t),\qquad J(x,t)=\frac{f^+(x,t)-f^-(x,t)}{\epsilon}.$$  Then the model equation \eqref{twoflux} can be written as
\begin{subequations}\label{twoflux_1}
    \begin{align}
        \partial_t \rho + v &\partial_x J = 0,\label{twoflux_1_1}\\
        \epsilon^2 \partial_t J + v\partial_x \rho = -&2T_0(x)J - 2 T_1(x)\rho \label{twoflux_1_2}.
    \end{align}
\end{subequations}
The initial data are $\rho(x,0)=\phi^+(x,0)+\phi^-(x,0)$, $J(x,0)=\frac{\phi^+(x,0)-\phi^-(x,0)}{\epsilon}$.
Since $f^+$ and $f^-$ represent the densities of bacteria moving in two distinct velocity directions, $\rho$ can be interpreted as the bacterial density, while $J$ corresponds to the flux. As a result, $\rho$ must be nonnegative over the entire spatial domain, whereas the sign of $J$ cannot be determined apriori. 
When $\epsilon \rightarrow 0$, denote $\rho_0 := \lim_{\epsilon\rightarrow 0} \rho$, $J_0 := \lim_{\epsilon\rightarrow 0} J$. From \eqref{twoflux_1_2}, one has 
\begin{equation}\label{eq:limitJrho}
    J_0 = -\frac{v\partial_x \rho_0}{2T_0} - \frac{T_1 \rho_0}{ T_0}.
\end{equation}Substituting \eqref{eq:limitJrho} into equation \eqref{twoflux_1} gives the following limiting equations
\begin{subequations}\label{limiteqtion}
    \begin{align}
        \partial_t \rho_0 + v \partial_x \left[ \frac{-v\partial_x \rho_0}{2T_0} - \frac{T_1}{T_0} \rho_0 \right] = 0, \label{limiteqtion_1}\\
        v\partial_x \rho_0 = -2T_0(x)J_0 - 2 T_1(x)\rho_0,
        \label{limiteqtion_2}
    \end{align}
\end{subequations}
determining $\rho_0$ and $J_0$.

\subsection{Loss functions}
This section is dedicated to recovering \(T_0(x)\) and \(T_1(x)\) using information from the known solutions \(f^\pm(x,t) \in C^1(\mathbb{R}\times [0,T])\). Following the variational approach used in \cite{carrillo2025sparse}, the loss function consists of two parts. The first part quantifies the fidelity of the estimate via the squared error between the true and estimated function values. The second part is constructed exclusively from known quantities and is designed to cancel out the unknown true value appearing in the first part, thereby converting the original optimization problem into a solvable one. Subsequently, we propose two different loss functions: one for recovering \(T_1\) given \(T_0\), and the other for recovering \(T_0\) given \(T_1\).

\subsubsection{Recover $T_1$}
Assume that we know the solution \((\rho, J) \in \mathbb{R} \times [0, T]\) of equation \eqref{twoflux_1} and the function \(T_0\). Let \(\mathcal{E}[\hat{T}_1]\) be the loss function for recovering \(T_1\). The general form of the optimization problem is then written as:
\begin{equation*}
    \mathop{\min}\limits_{\hat{T}_1} \ \mathcal{E}[\hat{T}_1] =\mathop{\min}\limits_{\hat{T}_1} \frac{1}{T}\int_0^T \int_\mathbb{R} |\hat{T}_1 - T_1|^2 \, E(\rho, J, T_0) \, dx \, dt,
\end{equation*}
where the specific choice of $E(\rho,J,T_0)$ is to be determined below.
Expanding the quadratic terms in the loss functional yields
\begin{align}
    \mathcal{E}[\hat{T}_1] &= \frac{1}{T}\int_0^T \int_\mathbb{R} |\hat{T}_1 - T_1|^2 \, E(\rho, J, T_0) \, dx \, dt \nonumber \\
    &= \frac{1}{T}\int_0^T \int_\mathbb{R} \left( \hat{T}_1^2 E(\rho, J, T_0) - 2\hat{T}_1 T_1 E(\rho, J, T_0) + T_1^2 E(\rho, J, T_0) \right) dx \, dt. \nonumber
\end{align}
The third term on the right-hand side, \(T_1^2 E\), contains only the true function \(T_1\) and known quantities \(E(\rho, J, T_0)\); therefore, it does not affect the minimum of the loss function. Consequently, we only need to minimize the following functional \(\tilde{\mathcal{E}}[\hat{T}_1]\) defined by
\begin{align}
    \tilde{\mathcal{E}}[\hat{T}_1] = \frac{1}{T}\int_0^T \int_\mathbb{R} \left( \hat{T}_1^2 E(\rho, J, T_0) - 2\hat{T}_1 T_1 E(\rho, J, T_0) \right) dx \, dt. \nonumber
\end{align}
It is clear that
\begin{equation*}
    \mathop{\arg\min}\limits_{\hat{T}_1} \, \mathcal{E}[\hat{T}_1] = \mathop{\arg\min}\limits_{\hat{T}_1} \, \tilde{\mathcal{E}}[\hat{T}_1].
\end{equation*}
However, the second term in \(\tilde{\mathcal{E}}[\hat{T}_1]\) contains the unknown true value \(T_1\). Therefore, it is necessary to design the specific form of \(E(\rho, J, T_0)\) such that the unknown quantity \(T_1\) in the second term can be expressed in terms of known quantities.

 Since \eqref{twoflux_1} allows us to express $2T_1(x) \rho$ as $-\epsilon^2 \partial_t J - v\partial_x \rho -2T_0(x)J$ (all known quantities),
we choose $E(\rho,J,T_0) = \frac{\rho}{T^2_0}$. 
 We ultimately recover $T_1$ by minimizing the following functional:
\begin{equation*}
    T_1\in \mathop{\arg\min}\limits_{\hat{T}_1} \ \tilde{\mathcal{E}}[\hat{T}_1],
\end{equation*}
with
\begin{equation}\label{loss_1}
    \tilde{\mathcal{E}}[\hat{T}_1] = \frac{1}{ T}\int_0^{T}\int_\mathbb{R}  \hat{T}_1^2(x)\frac{\rho}{T^2_0} + \frac{\hat{T}_1(x)}{T^2_0} \left[\epsilon^2 \partial_t J + v\partial_x \rho +2T_0(x)J \right] dxdt.
\end{equation} 
The advantage of choosing \(E(\rho, J, T_0) = \frac{\rho}{T_0^2}\) is that it enables the recovery of both the turning kernel \(T_1\) in the kinetic equation and the drift coefficient in the limiting equation \eqref{limiteqtion_1} by minimizing the same loss functional. We illustrate this advantage by examining the limit \(\epsilon \to 0\) in the loss function.

When \(\epsilon \to 0\), the term \(-2 T_1(x) \rho\) in \(\tilde{\mathcal{E}}[\hat{T}_1]\) becomes \(v \partial_x \rho_0 + 2 T_0(x) J_0\), and the loss function \eqref{loss_1} is given by
\begin{equation}\label{loss_limit}
    \tilde{\mathcal{E}}_0[\hat{T}_1] = \frac{1}{T} \int_0^{T} \int_\mathbb{R} \left( \hat{T}_1^2(x) \frac{\rho_0}{T_0^2} + \frac{\hat{T}_1(x)}{T_0^2} \left[ v \partial_x \rho_0 + 2 T_0 J_0 \right] \right) dx \, dt.
\end{equation}
By the same reasoning used for the kinetic model, minimizing \(\tilde{\mathcal{E}}_0[\hat{T}_1]\) is equivalent to minimizing
\begin{equation*}
    \mathcal{E}_0[\hat{T}_1] = \frac{1}{T} \int_0^T \int_\mathbb{R} |\hat{T}_1 - T_1|^2 \frac{\rho_0}{T_0^2} \, dx \, dt = \frac{1}{T} \int_0^T \int_\mathbb{R} \left| \frac{\hat{T}_1}{T_0} - \frac{T_1}{T_0} \right|^2 \rho_0 \, dx \, dt.
\end{equation*}
Since \(\frac{T_1}{T_0}\) is the drift coefficient in the limiting equation \eqref{limiteqtion_1}, a small value of \(\mathcal{E}_0[\hat{T}_1]\) indicates that the estimated drift coefficient \(\hat{\Gamma}(x) = -v \frac{\hat{T}_1}{T_0}\) is close to its true value \(\Gamma(x) = -v \frac{T_1}{T_0}\) in the weak sense.

Moreover, choosing $E(\rho,J,T_0) = \frac{\rho}{T^2_0}$ is beneficial to the uniform stability analysis in Section 3, while numerically, $E(\rho,J,T_0)=\rho, \frac{\rho}{T_0}$ are all potentially feasible forms of $E(\rho,J,T_0)$.

\subsubsection{Recover $T_0$}
When \(T_1\) is known, the general form of optimization problem for recovering \(T_0\) can be written as:
\begin{equation*}
    \mathop{\min}\limits_{\hat{T}_0} \ \mathcal{G}[\hat{T}_0] = \mathop{\min}\limits_{\hat{T}_0} \frac{1}{T}\int_0^T \int_\mathbb{R} |\hat{T}_0 - T_0|^2 \, G(\rho, J, T_1) \, dx \, dt,
\end{equation*}
where \(\rho\), \(J\), and \(T_1\) are all known quantities. Expanding the quadratic term yields:
\begin{align}\label{GT0}
    \mathcal{G}[\hat{T}_0] &= \frac{1}{T}\int_0^T \int_\mathbb{R} |\hat{T}_0 - T_0|^2 \, G(\rho, J, T_1) \, dx \, dt \nonumber\\
    &= \frac{1}{T}\int_0^T \int_\mathbb{R} \left( \hat{T}_0^2 G(\rho, J, T_1) - 2\hat{T}_0 T_0 G(\rho, J, T_1) + T_0^2 G(\rho, J, T_1) \right) dx \, dt.
\end{align}
The last term on the right-hand side of \eqref{GT0} depends on the true \(T_0(x)\) and the known quantities \(\rho, J, T_1\), and therefore plays no role in the minimization problem.

When we choose \(G(\rho, J, T_1) = J^2\), one can recover \(T_0\) by minimizing the following functional:
\begin{equation*}
    T_0 \in \mathop{\arg\min}\limits_{\hat{T}_0} \ \tilde{\mathcal{G}}[\hat{T}_0],
\end{equation*}
where
\begin{align}
    \tilde{\mathcal{G}}[\hat{T}_0] &= \frac{1}{T}\int_0^{T}\int_\mathbb{R} \left( \hat{T}_0^2(x) J^2 - 2 T_0(x) \hat{T}_0(x) J^2 \right) dx \, dt \nonumber\\
    &= \frac{1}{T}\int_0^{T}\int_\mathbb{R} \left( \hat{T}_0^2(x) J^2 - 2\hat{T}_0(x) J (T_0(x) J) \right) dx \, dt.
\end{align}
Using the form of equation \eqref{twoflux_1}, we can replace \(-2 T_0(x) J\) with \(\epsilon^2 \partial_t J + v \partial_x \rho + 2 T_1(x) \rho\). The loss function used to recover \(T_0\) then becomes:
\begin{align}\label{loss_2}
    \tilde{\mathcal{G}}[\hat{T}_0] = \frac{1}{T}\int_0^{T}\int_\mathbb{R} \left( \hat{T}_0^2(x) J^2 + \hat{T}_0(x) J \left( \epsilon^2 \partial_t J + v \partial_x \rho + 2 T_1(x) \rho \right) \right) dx \, dt.
\end{align}

In the limit \(\epsilon \to 0\), the above loss function becomes
\begin{equation*}
    \tilde{\mathcal{G}}_0[\hat{T}_0] = \frac{1}{T}\int_0^{T}\int_\mathbb{R} \left( \hat{T}_0^2(x) J_0^2 + \hat{T}_0(x) J_0 \left[ v \partial_x \rho_0 + 2 T_1(x) \rho_0 \right] \right) dx \, dt.
\end{equation*}
This is equivalent to minimizing
\begin{equation*}
    \mathcal{G}_0[\hat{T}_0] = \frac{1}{T}\int_0^{T}\int_\mathbb{R} |\hat{T}_0 - T_0|^2 J_0^2 \, dx \, dt,
\end{equation*}
which can be considered an appropriate loss function for recovering \(T_0\) in the limiting equation \eqref{limiteqtion}.

%As shown in the stability analysis of Section \ref{sec:Stability Analysis}, for \(G(\rho, J, T_1) = J^2\), uniform stability with respect to the multiscale parameter \(\epsilon\) can be proved rigorously. However, in actual numerical experiments, other functional forms---such as \(G(\rho, J, T_1) = (\partial_t J)^2\)---are also potentially feasible.

\section{Main theorems and the uniform stability}
\label{sec:Stability Analysis}

\subsection{Main Theorems}

 Assume that we know the time evolution of the solution for the time period $t\in[0,T]$. We denote 
 \[
 \|f\|_{L^2_x}^2 := \int_{\mathbb{R}} |f|^2 dx, \qquad\|f\|_{L^2_x, L^p_t}^p := \int_0^T \|f\|^p_{L^2_x} dt,\quad\mbox{for $p=1,2$}.
 \]
 Then we have the following stability results for the kinetic model \eqref{twoflux_1} (Theorem 1) and limit Keller-Segel model \eqref{limiteqtion_1} (Theorem 2). We only present the Theorems in this section and leave the detailed proof for the next section.

\begin{thm}\label{thm1}
Let $(\rho,J)$, $(\hat{\rho},\hat{J})$ be solutions to \eqref{twoflux_1} with the true $(T_0, T_1)$ and the estimated $(\hat{T}_0, \hat{T}_1)$ respectively. Assume that $\rho T_0^2$ is bounded on $\mathbb{R}\times[0,T]$, and $\hat T_0$, and $\hat T_1$ are bounded on $\mathbb{R}$. Then for $\bar{\rho}=\rho-\hat{\rho}$ and $\bar{J}=J-\hat{J}$, we have the following stability estimate:
\begin{equation*}
    \begin{aligned}
        \|\bar{\rho}(x,T)\|^2_{L^2_x}+\|\bar{J}(x,T)\|^2_{L^2_x} 
        \leq&   C_1 \|\bar{\rho}(x,0)\|^2_{L^2_x} + C_2 \|\bar{J}(x,0)\|^2_{L^2_x} + C_3\mathcal{E}[\hat{T}_1] + C_4\mathcal{G}[\hat{T}_0],
    \end{aligned}
\end{equation*}
where 
\begin{equation*}
    \mathcal{E}[\hat{T}_1] = \frac{1}{ T}\int_0^{T}\int_\mathbb{R} |\hat{T}_1 - T_1|^2 \frac{\rho}{T_0^2} dxdt,\qquad
    \mathcal{G}[\hat{T}_0] = \frac{1}{T}\int_0^{T}\int_\mathbb{R} |\hat{T}_0 - T_0|^2 J^2 dxdt.
\end{equation*}
Here $C_1,\ C_2,\ C_3,\ C_4 >0$ are constants depending on $\epsilon$ and $T$ diverging as $\epsilon\to 0$.
\end{thm}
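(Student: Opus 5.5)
Writing $(\rho,J)$ and $(\hat\rho,\hat J)$ for the solutions of \eqref{twoflux_1} with $(T_0,T_1)$ and $(\hat T_0,\hat T_1)$, I will run a standard $L^2$ energy estimate on the difference system, treating the kernel mismatch as a source term.

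\textbf{Step 1: difference system.} Subtracting the two copies of \eqref{twoflux_1} gives
\begin{align*}
\partial_t\bar\rho + v\partial_x\bar J &= 0,\\
\epsilon^2\partial_t\bar J + v\partial_x\bar\rho &= -2\hat T_0\bar J - 2\hat T_1\bar\rho - 2(T_0-\hat T_0)J - 2(T_1-\hat T_1)\rho .
\end{align*}
I add and subtract so that the estimated kernels multiply the unknown differences. The true solution $(\rho,J)$ then appears only in the source terms, multiplied by the kernel errors, which is exactly the structure of $\mathcal{G}$ and $\mathcal{E}$.

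\textbf{Step 2: energy identity.} Multiply the first equation by $\bar\rho$ and the second by $\bar J$, then integrate in $x$. By the decay condition \eqref{boundarycond}, the transport terms combine into $v\int\partial_x(\bar\rho\bar J)\,dx=0$. This yields
\[
\frac12\frac{d}{dt}\Big(\|\bar\rho\|_{L^2_x}^2+\epsilon^2\|\bar J\|_{L^2_x}^2\Big)
= -2\!\int\hat T_0\bar J^2 - 2\!\int\hat T_1\bar\rho\bar J - 2\!\int(T_0-\hat T_0)J\bar J - 2\!\int(T_1-\hat T_1)\rho\bar J .
\]
I do not assume a sign for $\hat T_0$, so I bound the first two terms by $C(\|\hat T_0\|_\infty,\|\hat T_1\|_\infty)(\|\bar\rho\|^2+\|\bar J\|^2)$. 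Young's inequality gives $2|\int(T_0-\hat T_0)J\bar J|\le\int|T_0-\hat T_0|^2J^2+\|\bar J\|^2$, and the time integral of the first piece is $T\,\mathcal{G}[\hat T_0]$.

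\textbf{Step 3: the drift source (main obstacle).} The delicate term is $\int(T_1-\hat T_1)\rho\bar J$, since $\mathcal{E}$ carries the weight $\rho/T_0^2$ rather than $\rho^2$. I write
\[
(T_1-\hat T_1)\rho=\frac{(T_1-\hat T_1)\sqrt{\rho}}{T_0}\cdot T_0\sqrt{\rho}.
\]
The hypothesis that $\rho T_0^2$ is bounded gives $|T_0\sqrt\rho|\le M^{1/2}$, where $M:=\sup\rho T_0^2$. Young's inequality then yields
\[
2\Big|\int(T_1-\hat T_1)\rho\bar J\Big|\le M\int|T_1-\hat T_1|^2\frac{\rho}{T_0^2}+\|\bar J\|^2 ,
\]
and the time integral of the first term is $MT\,\mathcal{E}[\hat T_1]$. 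Here I use $\rho\ge0$, which holds since $f^\pm\ge0$. This is the only place the boundedness of $\rho T_0^2$ enters, and it explains the choice $E=\rho/T_0^2$.

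\textbf{Step 4: Gronwall.} With $Y(t)=\|\bar\rho\|^2+\epsilon^2\|\bar J\|^2$, the bounds above combine to
\[
Y'\le \frac{C}{\epsilon^2}\,Y + s(t), \qquad \int_0^T s\,dt\le 2T\big(M\,\mathcal{E}[\hat T_1]+\mathcal{G}[\hat T_0]\big),
\]
where the factor $\epsilon^{-2}$ comes from controlling $\|\bar J\|^2\le\epsilon^{-2}Y$. Gronwall's inequality gives
\[
Y(T)\le e^{CT/\epsilon^2}\Big(Y(0)+2T\big(M\,\mathcal{E}[\hat T_1]+\mathcal{G}[\hat T_0]\big)\Big).
\]
Dividing by $\min(1,\epsilon^2)$ produces the claimed estimate for $\|\bar\rho(T)\|^2+\|\bar J(T)\|^2$. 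The resulting constants $C_1,\dots,C_4$ blow up like $\epsilon^{-2}e^{CT/\epsilon^2}$, consistent with the statement that they diverge as $\epsilon\to0$.
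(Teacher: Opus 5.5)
Your proof is correct, but it takes a different route from the paper. The paper writes the two-flux system in mild (Duhamel) form along the characteristics $x\mp vt/\epsilon$ and reconstructs $\bar\rho,\bar J$ from $f^\pm$. It then takes $L^2_x$ norms using translation invariance and applies Gronwall to the unsquared quantity $\|\bar\rho\|_{L^2_x}+\|\bar J\|_{L^2_x}$ with $L^2_xL^1_t$ sources, followed by H\"older in time and squaring. The weight $\rho/T_0^2$ is handled exactly as in your Step 3, via $\rho^2\le \|\rho T_0^2\|_{L^\infty}\,\rho/T_0^2$. Your energy identity instead uses the symmetric hyperbolic structure, so the transport terms cancel and $\epsilon$ enters only through the weight in $Y$.

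The characteristic route needs only mild $L^2$ solutions and no integration by parts. However, it treats the whole collision operator as a source, so every term carries $\epsilon^{-1}$ or $\epsilon^{-2}$, and $C_3,C_4$ come out of size $\epsilon^{-4}e^{CT/\epsilon^2}$. Your route needs enough regularity and decay to justify $\int\partial_x(\bar\rho\bar J)\,dx=0$. That is available under the standing $C^1$ and decay assumptions, or by density. It gives the somewhat cleaner $\epsilon^{-2}e^{CT/\epsilon^2}$.

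More importantly, your route exposes the damping term $-2\int\hat T_0\bar J^2\,dx$, which you discard by bounding it in absolute value. Suppose you keep it and assume $\hat T_0\ge c_0>0$, which is physically natural and is imposed in the appendix. Then Young's inequality with weight $2c_0/3$ on $\|\bar J\|^2_{L^2_x}$ in each of the three cross terms absorbs them all, giving
\[
Y'\le \frac{3\|\hat T_1\|_{L^\infty}^2}{c_0}\,Y+\frac{3}{c_0}\Bigl(\|(T_0-\hat T_0)J\|_{L^2_x}^2+\|(T_1-\hat T_1)\rho\|_{L^2_x}^2\Bigr).
\]
Hence $\|\bar\rho(T)\|^2_{L^2_x}\le Y(T)$ is bounded by $e^{3\|\hat T_1\|_{L^\infty}^2T/c_0}$ times $\|\bar\rho(0)\|^2_{L^2_x}+\epsilon^2\|\bar J(0)\|^2_{L^2_x}+\frac{3T}{c_0}\bigl(M\,\mathcal{E}[\hat T_1]+\mathcal{G}[\hat T_0]\bigr)$, with no explicit $\epsilon$-dependence in the constants. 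Only $\|\bar J(T)\|^2_{L^2_x}\le\epsilon^{-2}Y(T)$ still degenerates, and merely polynomially. Under that extra positivity assumption this is strictly stronger than Theorem 1 for the density. It is the kind of $\epsilon$-uniform control the paper reaches in Section 3 only conditionally, through the Keller--Segel limit and Wasserstein interpolation, and its characteristic estimate does not see it.
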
 

The proof of stability for the limiting Keller-Segel model is based on stochastic analysis methods using the Wasserstein-2 distance.
The limit equation \eqref{limiteqtion_1} can be written into the following Fokker-Planck type equation:
\begin{equation}\label{eq:FK}
    \partial_t \rho_0 + \partial_x \left[ \left(-\frac{vT_1}{T_0}-\frac{v^2\partial_x T_0}{2T_0^2}\right) \rho_0 \right]- \partial_x^2 \left[\frac{v^2}{2T_0} \rho_0\right] = 0.
\end{equation}
 By It\^o's formula, \eqref{eq:FK} can be interpreted as the evolution of probability density function of the following stochastic differential equation (SDE)
\cite{ikeda2014stochastic, carmona2016lectures}:
\begin{align}
    dX_t &= \left(-\frac{vT_1}{T_0}-\frac{v^2 \partial_x T_0}{2T_0^2}\right)(X_t)dt + \frac{v}{\sqrt{T_0}}(X_t)dB_t, \label{eq:dX_t}\\
    X_0 &= X^0\in L^2_x\ \text{independent of}\ (B_t)_{t\in[0,T]}.
\end{align}
Denote $\mu_t(x)$ as the probability measure of the density $\rho_0(x,t)$, which means $\rho_0(x,t)dx = d\mu_t(x)$. The Wasserstein-2 distance $d_2: \mathcal{P}^2(\mathbb{R})\times\mathcal{P}^2(\mathbb{R})\rightarrow\mathbb{R}$ between two measures $\mu,\xi\in\mathcal{P}^2(\mathbb{R})$ is defined as
\begin{equation*}
    d_2(\mu, \xi) := \min{\left\{ \int_{\mathbb{R}\times \mathbb{R}} |x-y|^2 d\gamma(x,y):\ \gamma\in\Pi(\mu,\xi) \right\}^{1/2}}.
\end{equation*}
Here $\mathcal{P}^2(\mathbb{R})$ denotes the space of probability measures with finite second moments; and for any $\mu,\xi \in \mathcal{P}(\mathbb{R})$ we consider $\Pi(\mu,\xi):=\{ \gamma\in \mathcal{P}^2(\mathbb{R}\times \mathbb{R})\ |\ (\pi_x)\#\gamma = \mu,\ (\pi_y)\#\gamma = \xi \}$ is the set of transport plans between the measures $\mu$ and $\xi$. 

Let $\hat{X}_t$ be the approximation of $X_t$ with parameters ($\hat{T}_0$, $\hat{T}_1$), which satisfies
\begin{equation}\label{eq:dhatX_t}
    d\hat{X}_t = \left(-\frac{v\hat{T}_1}{\hat{T}_0}-\frac{v^2\partial_x \hat{T}_0}{2\hat{T}_0^2}\right)(\hat{X}_t)dt + \frac{v}{\sqrt{\hat{T}_0}}(\hat{X}_t)dB_t,
\end{equation}
with the same Brownian motion as $B_t$. With these notations, we can then present our stability result for the limiting Keller-Segel equation.
\begin{thm}\label{thm2}
Let $\rho_0$ and $\hat{\rho}_0$ be solutions to the limiting equation \eqref{eq:FK} with the parameters $(T_0, T_1)$ and $(\hat{T}_0, \hat{T}_1)$ respectively. $\mu$ and $\hat{\mu}$ are respectively the probability measures of the density $\rho_0$ and $\hat{\rho}_0$. We have the following stability estimate:

\textup{\textbf{(\romannumeral1)}} Assume the functions $\frac{v\hat{T}_1}{\hat{T}_0} + \frac{v^2\partial_x \hat{T}_0}{2\hat{T}_0^2}$ and $\frac{v}{\sqrt{\hat{T}_0}}$ are both Lipschitz continuous with Lipschitz constants $L_1$ and $L_2$, respectively. The initial data $X_0$ and $\hat{X}_0$ are chosen such that $d_2^2(\mu_0, \hat{\mu}_0) = E|X_0 − \hat{X}_0|^2$, then 
\begin{align}
    d_2^2(\mu_T, \hat{\mu}_T) \leq& C^0_1 d_2^2(\mu_0, \hat{\mu}_0) +  C^0_2 \int_0^T \int_\mathbb{R} \left|\left(\frac{v\hat{T}_1}{\hat{T}_0} + \frac{v^2\partial_x \hat{T}_0}{2\hat{T}_0^2}\right)-\left(\frac{vT_1}{T_0}+\frac{v^2\partial_x T_0}{2T_0^2}\right)\right|^2 d\mu_t dt \nonumber \\
    &+ C^0_3 \int_0^T \int_\mathbb{R} \left|\frac{v}{\sqrt{T_0}} - \frac{v}{\sqrt{\hat{T}_0}}\right|^2 d\mu_t dt,\nonumber
\end{align}
where $C_{1}^0,\ C^0_2,\ C^0_3 >0$ are constants depending on the final time $T$ and $L_1$, $L_2$.

\textup{\textbf{(\romannumeral2)}} Let $$ J_0 = -\frac{v\partial_x \rho_0}{2T_0} - \frac{T_1 \rho_0}{ T_0},\qquad  \hat J_0 = -\frac{v\partial_x \hat\rho_0}{2\hat T_0} - \frac{\hat T_1 \hat\rho_0}{ \hat T_0},$$ and $\bar{J}_0=J_0-\hat{J}_0$. Assume that $\partial_x(\frac{\hat{T}_1}{\hat{T}_0}), \hat{T}_0, \partial_x\left(\frac{1}{\hat{T}_0}\right)\in L^\infty(\mathbb{R})$ and 
$$
\left(\frac{vT_1}{T_0}-\frac{v\hat{T}_1}{\hat{T}_0}\right)\partial_x J_0+\left(\frac{v^2}{2T_0}-\frac{v^2}{2\hat{T}_0}\right)\partial_x^2 J_0 \in L^2(\mathbb{R}),
$$ 
then
\begin{align}
    \|\bar{J}_0(x,T)\|^2_{L^2_x}
    \leq C^0_4\|\bar{J}_0(x,0)\|^2_{L^2_x} + C^0_5\int_0^T\int_\mathbb{R}\left(\left(\frac{vT_1}{T_0}-\frac{v\hat{T}_1}{\hat{T}_0}\right)\partial_x J_0 + \left(\frac{v^2}{2T_0}-\frac{v^2}{2\hat{T}_0}\right)\partial_x^2 J_0\right)^2 dx dt,\nonumber
\end{align}
where $C^0_4,\ C^0_5 >0$ are constants depending on the final time $T$ and the upper bounds of $\|\partial_x(\frac{\hat{T}_1}{\hat{T}_0})\|_{L^\infty_x}$, $\|\hat{T}_0\|_{L^\infty_x}$, $\|\partial_x\left(\frac{1}{\hat{T}_0}\right)\|_{L^\infty_x}$, but not on $\epsilon$.
\end{thm}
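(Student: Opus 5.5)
The two parts are proved independently: part (i) by a synchronous coupling of the SDEs \eqref{eq:dX_t} and \eqref{eq:dhatX_t}, part (ii) by an $L^2$ energy estimate for a parabolic equation satisfied by $J_0$ itself.

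\textbf{Part (i).} Write $b=-\frac{vT_1}{T_0}-\frac{v^2\partial_xT_0}{2T_0^2}$ and $\sigma=\frac{v}{\sqrt{T_0}}$, and $\hat b,\hat\sigma$ for the analogues built from $(\hat T_0,\hat T_1)$. By hypothesis $\hat b$ and $\hat\sigma$ are Lipschitz with constants $L_1,L_2$.

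1. I drive $X_t$ and $\hat X_t$ by the same Brownian motion. I take the initial pair to be an optimal coupling, so that $E|X_0-\hat X_0|^2=d_2^2(\mu_0,\hat\mu_0)$.
2. Apply It\^o's formula to $|X_t-\hat X_t|^2$:
\[
\frac{d}{dt}E|X_t-\hat X_t|^2=2E\big[(X_t-\hat X_t)(b(X_t)-\hat b(\hat X_t))\big]+E|\sigma(X_t)-\hat\sigma(\hat X_t)|^2 .
\]
3. Split each difference by inserting the intermediate point $\hat b(X_t)$, respectively $\hat\sigma(X_t)$:
\[
b(X_t)-\hat b(\hat X_t)=[b-\hat b](X_t)+[\hat b(X_t)-\hat b(\hat X_t)],
\]
and similarly for $\sigma$.
4. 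Bound the Lipschitz pieces by $L_1|X_t-\hat X_t|$ and $L_2|X_t-\hat X_t|$. Handle the cross terms with Young's inequality.
5. Since $X_t\sim\mu_t$, the remaining pieces are exactly $E|(b-\hat b)(X_t)|^2=\int|b-\hat b|^2d\mu_t$ and $\int|\sigma-\hat\sigma|^2d\mu_t$. This gives
\[
\frac{d}{dt}E|X_t-\hat X_t|^2\le (1+2L_1+2L_2^2)E|X_t-\hat X_t|^2+\int|b-\hat b|^2d\mu_t+2\int|\sigma-\hat\sigma|^2d\mu_t .
\]
6. Gronwall's inequality, combined with $d_2^2(\mu_T,\hat\mu_T)\le E|X_T-\hat X_T|^2$ (the law of $(X_T,\hat X_T)$ is an admissible plan), yields the estimate with $C^0_1=e^{(1+2L_1+2L_2^2)T}$ and $C^0_2,C^0_3$ of the same type.

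The only delicate point is well-posedness of \eqref{eq:dX_t}. I assume it tacitly, as the statement does; only $\hat b,\hat\sigma$ need to be Lipschitz because of the way the splitting is arranged.

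\textbf{Part (ii).} First derive a closed equation for $J_0$. From \eqref{limiteqtion_1}, $\partial_t\rho_0=-v\partial_xJ_0$. Differentiating the definition of $J_0$ in time (the coefficients do not depend on $t$) gives
\[
\partial_tJ_0=-\frac{v}{2T_0}\partial_x\partial_t\rho_0-\frac{T_1}{T_0}\partial_t\rho_0=a\,\partial_x^2J_0+b_1\partial_xJ_0,
\]
with $a=\frac{v^2}{2T_0}$ and $b_1=\frac{vT_1}{T_0}$; the same holds for $\hat J_0$ with $\hat a,\hat b_1$. Subtracting,
\[
\partial_t\bar J_0=\hat a\,\partial_x^2\bar J_0+\hat b_1\partial_x\bar J_0+R,\qquad R=(b_1-\hat b_1)\partial_xJ_0+(a-\hat a)\partial_x^2J_0 ,
\]
and $R\in L^2$ by assumption. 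Now multiply by $\bar J_0$, integrate over $\mathbb{R}$, and use the decay at infinity to integrate by parts.

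1. Diffusion term: $\int\hat a\bar J_0\partial_x^2\bar J_0=-\int\hat a|\partial_x\bar J_0|^2-\int\partial_x\hat a\,\bar J_0\partial_x\bar J_0$.
2. Coercivity: $\hat T_0\in L^\infty$ gives $\hat a\ge \frac{v^2}{2\|\hat T_0\|_\infty}=:a_*>0$.
3. Cross term: $\partial_x\hat a=\frac{v^2}{2}\partial_x(1/\hat T_0)\in L^\infty$, so Young's inequality absorbs this term into $\frac{a_*}{2}\|\partial_x\bar J_0\|^2$, at the cost of $C\|\bar J_0\|^2$.
4. Drift term: $\int\hat b_1\bar J_0\partial_x\bar J_0=-\frac12\int\partial_x\hat b_1|\bar J_0|^2$, which is bounded by $\|\partial_x(\hat T_1/\hat T_0)\|_\infty\|\bar J_0\|^2$.
5. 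Source term: $\int R\bar J_0\le\frac12\|R\|^2+\frac12\|\bar J_0\|^2$.
6. Conclusion: $\frac{d}{dt}\|\bar J_0\|^2\le C\|\bar J_0\|^2+\|R\|^2_{L^2_x}$. Gronwall gives the claim, with constants depending only on $T$ and the three $L^\infty$ norms, and in particular not on $\epsilon$.

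The main obstacle is step 3 of part (ii): the diffusion coefficient is non-divergence form, so integration by parts produces a first-order term $\partial_x\hat a\,\bar J_0\partial_x\bar J_0$. It can only be absorbed because of the lower bound on $\hat a$ and the $L^\infty$ bound on $\partial_x(1/\hat T_0)$. A secondary obstacle is justifying the integrations by parts, for which I would assume enough regularity and decay of $J_0,\hat J_0$, consistent with \eqref{boundarycond}.
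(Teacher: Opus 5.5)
Your proposal is correct and follows the paper's proof in both parts. In (i) it uses the synchronous coupling, splits the drift and diffusion mismatches at $\hat b(X_t)$ and $\hat\sigma(X_t)$, and closes with Gronwall; in (ii) it derives the same closed parabolic equation for $J_0$ and runs the same energy estimate, absorbing the term $\partial_x\hat a\,\bar J_0\,\partial_x\bar J_0$ via the lower bound $v^2/(2\|\hat T_0\|_{L^\infty})$. The only differences are harmless technical ones: you apply It\^o's formula to $\mathbb{E}|X_t-\hat X_t|^2$ instead of bounding $\mathbb{E}\sup_t|X_t-\hat X_t|^2$ with Burkholder--Davis--Gundy, which suffices since only time $T$ matters. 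You also use Young's inequality on the source term rather than running Gronwall on $\|\bar J_0\|_{L^2_x}$ and then applying H\"older in time.
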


\subsection{Conditional Uniform stability}

\paragraph{Conditional Scale-Uniform Forward Stability for  $T_1$.}
    When $T_0$ is known, which means $\hat{T}_0=T_0$, then from \textup{\textbf{Theorem 1}}, one gets 
    \begin{equation}\label{rhoboundk}
        \| \bar\rho(x,T) \|^2_{L^2_x} \leq C_1\| \rho(x,0)-\hat{\rho}(x,0) \|^2_{L^2_x} + C_2\| J(x,0)-\hat{J}(x,0) \|^2_{L^2_x} + C_3\mathcal{E}[\hat{T}_1].
    \end{equation} 
    Here $\bar\rho(x,T)=\rho(x,T)-\hat{\rho}(x,T)$, the constant $C_3\to\infty$ as $\epsilon\to 0$, which indicates that even if $\mathcal{E}[\hat{T}_1]$ is small, the obtained $\hat T_1$ may provide a wrong time evolution of the density $\rho$ when $\epsilon$ is small.

Let us now assume uniform in time spatial Sobolev regularity on the densities $\rho_0(x,t)$ and $\hat{\rho}_0(x,t)$, so we can make use of interpolation inequalities involving $d_2$ and Sobolev norms as in \cite[Propostion 2.23 and Proposition 2.12]{CTPortoErcole} and \cite[Theorem 4.1]{CGT99}.
From \textup{\textbf{Theorem 2 (\romannumeral1)}} and this interpolation conditional to enough regularity, one can control the $L^2$ norm of the density by the weaker Wasserstein-2 distance such that
    $$
    \| \rho_0(x,T)-\hat{\rho}_0(x,T) \|^2_{L^2_x}\leq C_rd_2^{2\beta}(\mu_T, \hat{\mu}_T), 
    $$ 
where $0<\beta<1$ and $C_r$ are constants related to the assumed uniform in $[0,T]$ spatial Sobolev regularity of the solutions $\rho_0$ and $\hat\rho_0$. Then, if the solutions are regular enough, one can derive 
    \begin{align}
         \| \bar\rho_0(x,T) \|^{2/\beta}_{L^2_x} \leq&\, C^0_1 d_2^2(\mu(0), \hat{\mu}(0)) +  C^0_2v^2 \int_0^T \int_\mathbb{R} \left|\frac{\hat{T}_1}{T_0}-\frac{T_1}{T_0}\right|^2 \rho_0 dx dt\nonumber\\
        =&\, C^0_1 d_2^2(\mu(0), \hat{\mu}(0)) +  C^0_2 v^2T\mathcal{E}_0[\hat{T}_1],
        \label{rho0distance}
    \end{align}
with $C^0_2$ being independent of $\epsilon$.
    
    The combination of \eqref{rhoboundk} and \eqref{rho0distance} allows us to construct a uniform stability analysis with respect to the multiscale parameter $\epsilon$ for the recovery of $T_1$. More precisely, as illustrated in Figure \ref{Fig_uniform},  
    $$\|\bar \rho(x,T) \|^2_{L^2_x}\leq  3\| \rho(x,T)-\rho_0(x,T) \|^2_{L^2_x}+ 3\| \rho_0(x,T)-\hat{\rho}_0(x,T) \|^2_{L^2_x}+ 3\| \hat\rho(x,T)-\hat{\rho}_0(x,T) \|^2_{L^2_x}.$$
    It is known from \cite{chalub2004kinetic} that, away from the boundary and initial layers, $\| \rho(x,T)-\rho_0(x,T) \|^2_{L^2_x},  \| \hat\rho(x,T)-\hat{\rho}_0(x,T) \|^2_{L^2_x}\sim O(\epsilon)$, combined with the bound provided in \eqref{rho0distance}, one has
    \begin{equation}
    \|\bar\rho(x,T)\|^{2/\beta}_{L^2_x}=\| \rho(x,T)-\hat{\rho}(x,T) \|^{2/\beta}_{L^2_x}\leq O(\epsilon^{1/\beta})+ 3C^0_1 d_2^2(\mu(0), \hat{\mu}(0)) +  3C^0_2 v^2T\mathcal{E}_0[\hat{T}_1].\label{eq:rhodif}
    \end{equation}

    Since $\rho$ and $J$ are obtained from the real experimental data, noise is unavoidable, thus
    $\mathcal{E}_0[\hat{T}_1]$ is at least at the magnitude of noise, which can be $1-10$ percent of the magnitude of the true data in reality. On the other hand, according to the rescaling considered in real E. coli chemotaxis \cite{si2014pathway, sun2017macroscopic, calvez2015confinement}, when the environment changes slowly, i.e., the chemical gradient is small, the Keller-Segel equation is a good approximation to the kinetic model, in which the rescaled parameter $\epsilon$ is around $0.01\sim 0.1$. 
    Therefore, in the Keller-Segel regime where $\epsilon$ is small, the bound in \eqref{rhoboundk} can be large (due to the way of $C_3$'s dependence on $\epsilon$). On the other hand, when $\epsilon$ is small, $\|\bar\rho(x,T)\|_{L^2_x}$ is at similar magnitude as $\mathcal{E}_0[\hat{T}_1]$ thanks to \eqref{eq:rhodif}. 
    Therefore, one can expect, the distance between the solutions $\hat{\rho}$ and $\rho$ obtained from the estimated $\hat{T}_1$ and the true $T_1$ will be at the magnitude of the noise, regardless of the value of $\epsilon$ under this regularity assumption.

    \begin{figure}[!htbp]
    \centering
    \begin{tikzpicture}[
        node distance=2cm and 4cm,
        every node/.style={font=\large, color=black}
    ]
        \node (hatrho) at (0,0) {$\hat{\rho}$};
        \node (hatrho0) at (4,0) {$\hat{\rho}_0$};
        \node (rho)  at (0,-2) {$\rho$};
        \node (rho0)  at (4,-2) {$\rho_0$};
        
        \draw[->, blue, thick] (hatrho) -- node[above] {$O(\epsilon) $} (hatrho0);
        \draw[->, blue, thick] (rho)  -- node[above] {$O(\epsilon) $} (rho0);
        
        \draw[->, blue, thick] (hatrho) -- node[left]  {$\mathcal{E}[\hat{T}_1] $} (rho);
        \draw[->, blue, thick] (hatrho0) -- node[right] {$\mathcal{E}_0[\hat{T}_1] $} (rho0);
        
    \end{tikzpicture}
    \caption{Illustration of Conditional Scale-Uniform Forward Stability for $T_1$. $\rho$ ($\hat{\rho}$) is the density obtained from the kinetic model \eqref{twoflux_1} with $(T_0,T_1)$ ($(\hat T_0,\hat T_1)$), and $\rho_0$ ($\hat{\rho}_0$) is the solution to its macroscopic limit equation \eqref{limiteqtion} with $(T_0,T_1)$ ($(\hat T_0,\hat T_1)$). From \textup{\textbf{Theorem 1}} and \textup{\textbf{Theorem 2}}, we know that $\hat{\rho}\ (\hat{\rho}_0)$ is a good approximation of $\rho\ (\rho_0)$ for all $\epsilon$ when $\mathcal{E}[\hat{T}_1]\ (\mathcal{E}_0[\hat{T}_1])$ is small under regularity assumptions.}\label{Fig_uniform}
    \end{figure}

\paragraph{Conditional Scale-Uniform Forward Stability for $T_0$.}
    When $T_1$ is known, then $\hat{T}_1=T_1$ in \textup{\textbf{Theorem 1}}, we obtain 
    \begin{equation}\label{J_bound}
        \| \bar J(x,T) \|^2_{L^2_x} \leq C_1\| \rho(x,0)-\hat{\rho}(x,0) \|^2_{L^2_x} + C_2\| J(x,0)-\hat{J}(x,0) \|^2_{L^2_x} + C_4\mathcal{G}[\hat{T}_0].
    \end{equation}

In \textup{\textbf{Theorem 2 (\romannumeral2)}}, the right-hand side of the inequality used to control $\bar J_0$ is not $\mathcal{G}[\hat{T}_0]$; rather, it involves the spatial derivative of the current $J_0$. However, in practice $J_0$ can be noisy, and $\partial_x J_0$ is even worse. Therefore, we have used $J_0$ within $\mathcal{G}[\hat{T}_0]$. To ensure that the theorem is consistent with the numerical implementation, in Lemma \ref{lemma_forJ} (Appendix~\ref{Appendix_A}) we add further assumptions on the regularity of the solutions to provide an upper bound that includes $\mathcal{G}[\hat{T}_0]$. If $\hat{T}_1=T_1$ and \textbf{Assumptions} \ref{ass:T_0}-\ref{ass:data_gram} in Appendix~\ref{Appendix_A} hold, then we can obtain that
\begin{equation*}
    \|\bar J_0(x,T)\|^2_{L^2_x} \leq C^0_4\|J_0(x,0)-\hat{J}_0(x,0)\|^2_{L^2_x} + C^0_6\mathcal{G}_0[\hat{T}_0],
\end{equation*}
with $C_6^0>0$ is a constant independent of $\epsilon$. This allows us to get uniform stability results with respect to the multiscale parameter $\epsilon$ for the recovery of $T_0$. Similarly as for the recovery of $T_1$, as illustrated in Figure \ref{Fig:uniform_J},
\begin{equation}\label{eq:difJ}
\| \bar J(x,T)\|^2_{L^2_x}\leq O(\epsilon)+ 3C^0_4\|J_0(x,0)-\hat{J}_0(x,0)\|^2_{L^2_x} + 3C^0_6\mathcal{G}_0[\hat{T}_0].
\end{equation}
Therefore, when $\epsilon$ is small, though the bound of $\|\bar J\|_{L_x^2}$in \eqref{J_bound} can be large (due to the $C_4$’s dependence on $\epsilon$),  \eqref{eq:difJ} provides a good bound when $\epsilon$ is small under the assumed regularity of solutions in \textbf{Assumptions} \ref{ass:T_0}-\ref{ass:data_gram} in Appendix~\ref{Appendix_A}. As a consequence, the distance between the solutions $\hat{J}$ and $J$ obtained respectively from the estimated $\hat{T}_0$ and the true $T_0$ should be at the magnitude of the possible noise similarly to the case of the $T_1$ recovery.

\begin{figure}[!htbp]
    \centering
    \begin{tikzpicture}[
        node distance=2cm and 4cm,
        every node/.style={font=\large, color=black}
    ]
        \node (hatJ) at (0,0) {$\hat{J}$};
        \node (hatJ0) at (4,0) {$\hat{J}_0$};
        \node (J)  at (0,-2) {$J$};
        \node (J0)  at (4,-2) {$J_0$};
        
        \draw[->, blue, thick] (hatJ) -- node[above] {$O(\epsilon) $} (hatJ0);
        \draw[->, blue, thick] (J)  -- node[above] {$O(\epsilon) $} (J0);
        
        \draw[->, blue, thick] (hatJ) -- node[left]  {$\mathcal{G}[\hat{T}_0] $} (J);
        \draw[->, blue, thick] (hatJ0) -- node[right] {$\mathcal{G}_0[\hat{T}_0] $} (J0);
    \end{tikzpicture}
    \caption{Illustration of conditional Scale-Uniform Forward Stability for $T_0$. $J\ (\hat{J})$ is flux density obtained from the kinetic model \eqref{twoflux_1} with $(T_0,T_1)$ ($(\hat T_0,\hat T_1)$), and $J_0$ ($\hat{J}_0$) is the solution to its macroscopic limit equation \eqref{limiteqtion} with $(T_0,T_1)$ ($(\hat T_0,\hat T_1)$). From \textup{\textbf{Theorem 1}} and \textup{\textbf{Theorem 2}}, we know that $\hat{J}\ (\hat{J}_0)$ is a good approximation of $J\ (J_0)$ for all $\epsilon$ when $\mathcal{G}[\hat{T}_0]\ (\mathcal{G}_0[\hat{T}_0])$ is small under regularity assumptions. }\label{Fig:uniform_J}
    \end{figure}

\section{Proof of the main theorems}
\subsection{Proof of \textbf{Theorem 1}}
\begin{proof}
Let $X^+(t) = x-\frac{v}{\epsilon}t$, $X^-(t) = x+\frac{v}{\epsilon}t$.
Using the characteristic method, the solutions to Eq. \eqref{twoflux} can be written as
\begin{align}\label{solutionbychar}
        f^+(x,T)=& f^+(X^+(T),0) + \int_0^T \left[ \frac{T_0(X^+(T-t))-\epsilon T_1(X^+(T-t))}{\epsilon^2}f^-(X^+(T-t),t)\right. \nonumber\\
        &-\left. \frac{T_0(X^+(T-t))+\epsilon T_1(X^+(T-t))}{\epsilon^2}f^+(X^+(T-t),t)\right]dt,\nonumber\\
        f^-(x,T)=&f^-(X^-(T),0) + \int_0^T \left[ \frac{T_0(X^-(T-t))+\epsilon T_1(X^-(T-t))}{\epsilon^2}f^+(X^-(T-t),t)\right. \nonumber\\
        &-\left. \frac{T_0(X^-(T-t))-\epsilon T_1(X^-(T-t))}{\epsilon^2}f^-(X^-(T-t),t)\right]dt.
\end{align}

Since $\rho(x,t)=f^+(x,t)+f^-(x,t)$, $J(x,t)=\frac{f^+(x,t)-f^-(x,t)}{\epsilon}$, and $f^+=\frac{\rho+\epsilon J}{2}$, $f^-=\frac{\rho-\epsilon J}{2}$, then
\begin{align}
        \rho(x,T)=&\frac{\rho+\epsilon J}{2} (X^+(T),0) + \frac{\rho-\epsilon J}{2} (X^-(T),0)  \nonumber\\
        &+ \int_0^T \left[ \frac{T_0-\epsilon T_1}{\epsilon^2}\frac{\rho-\epsilon J}{2}\right](X^+(T-t),t)dt-\int_0^T\left[ \frac{T_0+\epsilon T_1}{\epsilon^2}\frac{\rho+\epsilon J}{2}\right](X^+(T-t),t)dt\nonumber\\
        &+ \int_0^T\left[ \frac{T_0+\epsilon T_1}{\epsilon^2}\frac{\rho+\epsilon J}{2}\right](X^-(T-t),t)dt -\int_0^T\left[ \frac{T_0-\epsilon T_1}{\epsilon^2}\frac{\rho-\epsilon J}{2}\right](X^-(T-t),t)dt \nonumber\\
        =&\frac{\rho+\epsilon J}{2} (X^+(T),0) + \frac{\rho-\epsilon J}{2} (X^-(T),0) + \int_0^T \left[ -\frac{T_1}{\epsilon} \rho-\frac{T_0}{\epsilon}J\right](X^+(T-t),t)dt \nonumber\\
        &+\int_0^T \left[ \frac{T_1}{\epsilon} \rho+\frac{T_0}{\epsilon}J\right](X^-(T-t),t)dt,\nonumber
\end{align}
\begin{align}
        J(x,T)=&\frac{\rho+\epsilon J}{2\epsilon} (X^+(T),0) - \frac{\rho-\epsilon J}{2\epsilon} (X^-(T),0)  \nonumber\\
        &+ \frac{1}{\epsilon} \int_0^T \left[ \frac{T_0-\epsilon T_1}{\epsilon^2}\frac{\rho-\epsilon J}{2}\right](X^+(T-t),t)dt- \frac{1}{\epsilon} \int_0^T\left[ \frac{T_0+\epsilon T_1}{\epsilon^2}\frac{\rho+\epsilon J}{2}\right](X^+(T-t),t)dt\nonumber\\
        &+ \frac{1}{\epsilon} \int_0^T\left[ \frac{T_0-\epsilon T_1}{\epsilon^2}\frac{\rho-\epsilon J}{2}\right](X^-(T-t),t)dt - \frac{1}{\epsilon} \int_0^T\left[ \frac{T_0+\epsilon T_1}{\epsilon^2}\frac{\rho+\epsilon J}{2}\right](X^-(T-t),t)dt\nonumber\\
        =&\frac{\rho+\epsilon J}{2\epsilon} (X^+(T),0) - \frac{\rho-\epsilon J}{2\epsilon} (X^-(T),0) + \frac{1}{\epsilon}\int_0^T \left[ -\frac{T_1}{\epsilon} \rho-\frac{T_0}{\epsilon}J\right](X^+(T-t),t)dt \nonumber\\
        &+ \frac{1}{\epsilon} \int_0^T \left[ -\frac{T_1}{\epsilon} \rho-\frac{T_0}{\epsilon}J\right](X^-(T-t),t)dt. \nonumber
\end{align}

Since $\bar{\rho}=\rho-\hat{\rho}$ and $\bar{J}=J-\hat{J}$, we can then get
\begin{align}
        \bar{\rho}(x,T) =& \frac{\bar{\rho}+\epsilon \bar{J}}{2}(X^+(T),0)+\frac{\bar{\rho}-\epsilon \bar{J}}{2}(X^-(T),0)\nonumber\\
        &+\int_0^T \left[ -\left( \frac{T_1}{\epsilon}\rho - \frac{\hat{T}_1}{\epsilon}\hat{\rho} \right) - \left( \frac{T_0}{\epsilon}J - \frac{\hat{T}_0}{\epsilon}\hat{J} \right) \right] (X^+(T-t),t) dt\nonumber\\
        &+\int_0^T \left[ \left( \frac{T_1}{\epsilon}\rho - \frac{\hat{T}_1}{\epsilon}\hat{\rho} \right) + \left( \frac{T_0}{\epsilon}J - \frac{\hat{T}_0}{\epsilon}\hat{J} \right) \right] (X^-(T-t),t) dt,\nonumber
\end{align}
\begin{align}
        \bar{J}(x,T) =& \frac{\bar{\rho}+\epsilon\bar{J}}{2\epsilon}(X^+(T),0)-\frac{\bar{\rho}-\epsilon\bar{J}}{2\epsilon}(X^-(T),0)\nonumber\\
        &+\frac{1}{\epsilon}\int_0^T \left[ -\left( \frac{T_1}{\epsilon}\rho - \frac{\hat{T}_1}{\epsilon}\hat{\rho} \right) - \left( \frac{T_0}{\epsilon}J - \frac{\hat{T}_0}{\epsilon}\hat{J} \right) \right] (X^+(T-t),t) dt\nonumber\\
        &+\frac{1}{\epsilon}\int_0^T \left[ -\left( \frac{T_1}{\epsilon}\rho - \frac{\hat{T}_1}{\epsilon}\hat{\rho} \right) - \left( \frac{T_0}{\epsilon}J - \frac{\hat{T}_0}{\epsilon}\hat{J} \right) \right] (X^-(T-t),t) dt.\nonumber
\end{align}
 By considering the $L^2_x$ norm of $\bar{\rho}$ and $\bar{J}$, we obtain
\begin{align}
    \|\bar{\rho}(x,T)\|_{L^2_x} \leq& \|\bar{\rho}(x,0)\|_{L^2_x} + \epsilon \|\bar{J}(x,0)\|_{L^2_x}+ 2 \left\lVert\left( \frac{\hat{T}_1}{\epsilon}\hat{\rho} - \frac{T_1}{\epsilon}\rho \right) + \left( \frac{\hat{T}_0}{\epsilon}\hat{J} - \frac{T_0}{\epsilon}J \right)\right\rVert_{L^2_x,L^1_t} \nonumber\\
    \leq& \|\bar{\rho}(x,0)\|_{L^2_x} + \epsilon \|\bar{J}(x,0)\|_{L^2_x}\nonumber\\
    &+ 2 \left\lVert\left( \frac{\hat{T}_1}{\epsilon}\hat{\rho} - \frac{\hat{T}_1}{\epsilon}\rho + \frac{\hat{T}_1}{\epsilon}\rho- \frac{T_1}{\epsilon}\rho \right)\right\rVert_{L^2_x,L^1_t} + 2 \left\lVert\left( \frac{\hat{T}_0}{\epsilon}\hat{J} -\frac{\hat{T}_0}{\epsilon}J+\frac{\hat{T}_0}{\epsilon}J-\frac{T_0}{\epsilon}J \right)\right\rVert_{L^2_x,L^1_t} \nonumber\\
    \leq& \|\bar{\rho}(x,0)\|_{L^2_x} + \epsilon \|\bar{J}(x,0)\|_{L^2_x}+ 2\left\lVert \frac{\hat{T}_1}{\epsilon}\right\rVert_{L^\infty_x} \|\bar{\rho}\|_{L^2_x,L^1_t} + 2\left\lVert \frac{\hat{T}_0}{\epsilon}\right\rVert_{L^\infty_x} \|\bar{J}\|_{L^2_x,L^1_t}\nonumber\\
    &+ \frac{2}{\epsilon} \|(\hat{T}_1 - T_1)\rho\|_{L^2_x,L^1_t} + \frac{2}{\epsilon} \|(\hat{T}_0 - T_0)J\|_{L^2_x,L^1_t},  \nonumber
\end{align}
\begingroup
\allowdisplaybreaks[0]
\begin{align}
    \|\bar{J}(x,T)\|_{L^2_x} 
    \leq& \frac{1}{\epsilon} \|\bar{\rho}(x,0)\|_{L^2_x} + \|\bar{J}(x,0)\|_{L^2_x} + \frac{2}{\epsilon} \left\lVert \frac{\hat{T}_1}{\epsilon}\right\rVert_{L^\infty_x} \|\bar{\rho}\|_{L^2_x,L^1_t} + \frac{2}{\epsilon} \left\lVert \frac{\hat{T}_0}{\epsilon}\right\rVert_{L^\infty_x} \|\bar{J}\|_{L^2_x,L^1_t}\nonumber\\
    &+ \frac{2}{\epsilon^2}  \|(\hat{T}_1 - T_1)\rho\|_{L^2_x,L^1_t} + \frac{2}{\epsilon^2} \|(\hat{T}_0 - T_0)J\|_{L^2_x,L^1_t}.  \nonumber
\end{align}
\endgroup
Then, denote
\begin{equation}
    C_{\epsilon} = \max\left\{\left(\frac{2}{\epsilon}+\frac{2}{\epsilon^2}\right)\left\lVert \hat{T}_1\right\rVert_{L^\infty_x} , \left(\frac{2}{\epsilon}+\frac{2}{\epsilon^2}\right)\left\lVert \hat{T}_0\right\rVert_{L^\infty_x}\right\},
\end{equation}
and by Gr$\ddot{\text{o}}$nwall's inequality and H$\ddot{\text{o}}$lder's inequality,
\begin{align*}
        \|\bar{\rho}(x,T)\|_{L^2_x}+\|\bar{J}(x,T)\|_{L^2_x} 
        \leq& e^{T C_{\epsilon}} \bigg( \left(1+\frac{1}{\epsilon}\right)\|\bar{\rho}(x,0)\|_{L^2_x} + (\epsilon+1) \|\bar{J}(x,0)\|_{L^2_x} \\
        &+ \frac{2 (1+\epsilon)}{\epsilon^2} \|(\hat{T}_1 - T_1)\rho\|_{L^2_x,L^1_t} + \frac{2 (1+\epsilon)}{\epsilon^2} \|(\hat{T}_0 - T_0)J\|_{L^2_x,L^1_t} \bigg)\\
        \leq& e^{T C_{\epsilon}}\Bigg( \left(1+\frac{1}{\epsilon}\right)\|\bar{\rho}(x,0)\|_{L^2_x} + (\epsilon+1) \|\bar{J}(x,0)\|_{L^2_x} \\
        &+ \frac{2\sqrt{T} (1+\epsilon)}{\epsilon^2} \|(\hat{T}_1 - T_1)\rho\|_{L^2_x,L^2_t} + \frac{2\sqrt{T} (1+\epsilon)}{\epsilon^2} \|(\hat{T}_0 - T_0)J\|_{L^2_x,L^2_t} \bigg).\nonumber
\end{align*}
Therefore, squaring both sides and applying the Cauchy–Schwarz inequality, we obtain
\begin{align}
    \|\bar{\rho}(x,T)\|^2_{L^2_x}+\|\bar{J}(x,T)\|^2_{L^2_x} \leq& \big( \|\bar{\rho}(x,T)\|_{L^2_x}+\|\bar{J}(x,T)\|_{L^2_x}\big)^2\nonumber\\
        \leq& 4e^{2TC_\epsilon}\Bigg( \left(1+\frac{1}{\epsilon}\right)^2\|\bar{\rho}(x,0)\|^2_{L^2_x} + (\epsilon+1)^2 \|\bar{J}(x,0)\|^2_{L^2_x} \\
        &+ \frac{4T (1+\epsilon)^2}{\epsilon^4} \|(\hat{T}_1 - T_1)\rho\|^2_{L^2_x,L^2_t} + \frac{4T (1+\epsilon)^2}{\epsilon^4} \|(\hat{T}_0 - T_0)J\|^2_{L^2_x,L^2_t} \bigg).\nonumber
\end{align}
Then from
\begin{align}
    \|(\hat{T}_1 - T_1)\rho\|^2_{L^2_x,L^2_t} &= \int_0^{T}\int_\mathbb{R} |\hat{T}_1 - T_1|^2 \rho^2 dxdt\leq T\|\rho T_0^2\| _{L^\infty_x} \mathcal{E}[\hat{T}_1], \nonumber\\
    \|(\hat{T}_0 - T_0)J\|^2_{L^2_x,L^2_t} &=\int_0^{T}\int_\mathbb{R} |\hat{T}_0 - T_0|^2 J^2 dxdt = T \mathcal{G}[\hat{T}_0],\nonumber 
\end{align}
we can conclude that
\begin{align}
    \|\bar{\rho}(x,T)\|^2_{L^2_x}+\|\bar{J}(x,T)\|^2_{L^2_x} 
        \leq&   C_1 \|\bar{\rho}(x,0)\|^2_{L^2_x} + C_2 \|\bar{J}(x,0)\|^2_{L^2_x} + C_3\mathcal{E}[\hat{T}_1] + C_4\mathcal{G}[\hat{T}_0].\nonumber
\end{align}
\end{proof}

\subsection{Proof of \textbf{Theorem 2}}
\begin{proof}
 
\textup{\textbf{(\romannumeral1)}}:    
The proof follows the strategy used in \cite{carrillo2025sparse}. By the definition of the Wasserstein-2 distance we have
\begin{equation*}
    d_2^2(\mu_T, \hat{\mu}_T) \leq \mathbb{E} \sup_{t\in[0,T]} |X_t-\hat{X}_t|^2.
\end{equation*}
From Eq. \eqref{eq:dX_t} and \eqref{eq:dhatX_t},
\begin{align}\label{difference_of_SDE}
    d(X_t-\hat{X}_t) =& \left[\left(-\frac{vT_1}{T_0}-\frac{v^2\partial_x T_0}{2T_0^2}\right)(X_s) - \left(-\frac{v\hat{T}_1}{\hat{T}_0} - \frac{v^2\partial_x \hat{T}_0}{2\hat{T}_0^2}\right)(\hat{X}_s)\right] dt \nonumber\\
    &+ \left[ \frac{v}{\sqrt{T_0}}(X_s) - \frac{v}{\sqrt{\hat{T}_0}}(\hat{X}_s)\right] dB_t.
\end{align}
Integrating both sides of Eq. \eqref{difference_of_SDE} with respect to $t$, squaring the result, and applying the Cauchy–Schwarz inequality, we obtain the following estimate:
\begin{align*}
    \mathbb{E} \sup_{t\in[0,T]} |X_t-\hat{X}_t|^2 \leq& 3\mathbb{E} \sup_{t\in[0,T]}\Bigg( |X_0-\hat{X}_0|^2 \\
    &+\left| \int_0^t \left[ \left(-\frac{vT_1}{T_0}-\frac{v^2\partial_x T_0}{2T_0^2}\right)(X_s) - \left(-\frac{v\hat{T}_1}{\hat{T}_0} - \frac{v^2\partial_x \hat{T}_0}{2\hat{T}_0^2}\right)(\hat{X}_s) \right]ds \right|^2\\
    &+ \left| \int_0^t \left[ \frac{v}{\sqrt{T_0}}(X_s) - \frac{v}{\sqrt{\hat{T}_0}}(\hat{X}_s)\right] dB_s \right|^2 \Bigg)\\
    :=& \ \uppercase\expandafter{\romannumeral1}\ + \ \uppercase\expandafter{\romannumeral2}\ + \ \uppercase\expandafter{\romannumeral3}.
\end{align*}

From H$\ddot{\text{o}}$lder's inequality, we then have
\begin{align*}
    \uppercase\expandafter{\romannumeral2} =& 3\mathbb{E} \sup_{t\in[0,T]} \left| \int_0^t \left[ \left(-\frac{vT_1}{T_0}-\frac{v^2\partial_x T_0}{2T_0^2}\right)(X_s) - \left(-\frac{v\hat{T}_1}{\hat{T}_0} - \frac{v^2\partial_x \hat{T}_0}{2\hat{T}_0^2}\right)(\hat{X}_s) \right]ds \right|^2 \\
    \leq& 3\mathbb{E} \sup_{t\in[0,T]} t\int_0^t \left| \left(-\frac{vT_1}{T_0}-\frac{v^2\partial_x T_0}{2T_0^2}\right)(X_s) - \left(-\frac{v\hat{T}_1}{\hat{T}_0} - \frac{v^2\partial_x \hat{T}_0}{2\hat{T}_0^2}\right)(\hat{X}_s)\right|^2 ds \\
    \leq&6T \mathbb{E} \sup_{t\in[0,T]}\int_0^t \left| \left(-\frac{vT_1}{T_0}-\frac{v^2\partial_x T_0}{2T_0^2}\right)(X_s) - \left(-\frac{v\hat{T}_1}{\hat{T}_0} - \frac{v^2\partial_x \hat{T}_0}{2\hat{T}_0^2}\right)(X_s)\right|^2 ds\\
    &+ 6T \mathbb{E} \sup_{t\in[0,T]}\int_0^t \left| \left(-\frac{v\hat{T}_1}{\hat{T}_0} - \frac{v^2\partial_x \hat{T}_0}{2\hat{T}_0^2}\right)(X_s) - \left(-\frac{v\hat{T}_1}{\hat{T}_0} - \frac{v^2\partial_x \hat{T}_0}{2\hat{T}_0^2}\right)(\hat{X}_s)\right|^2 ds\\
    \leq& 6T \int_0^T \int_\mathbb{R} \left|\left(-\frac{vT_1}{T_0}-\frac{v^2\partial_x T_0}{2T_0^2}\right) - \left(-\frac{v\hat{T}_1}{\hat{T}_0} - \frac{v^2\partial_x \hat{T}_0}{2\hat{T}_0^2}\right)\right|^2 d\mu_t dt\\
    &+6TL_1^2 \int_0^T \mathbb{E}\sup_{s\in[0,t]} |X_s-\hat{X}_s|^2dt.
\end{align*}
Since $\int_0^t \left[ \frac{v}{\sqrt{T_0}}(X_s) - \frac{v}{\sqrt{\hat{T}_0}}(\hat{X}_s)\right] dB_s$ is a continuous martingale, we can use Burkholder-Davis-Gundy inequality, as has been done in \cite[ Section A.2]{carrillo2025sparse}, and get
\begin{align*}
    \uppercase\expandafter{\romannumeral3} =& 3\mathbb{E} \sup_{t\in[0,T]}\left| \int_0^t \left[ \frac{v}{\sqrt{T_0}}(X_s) - \frac{v}{\sqrt{\hat{T}_0}}(\hat{X}_s)\right] dB_s \right|^2
    \leq 3CT\  \mathbb{E} \int_0^T \left| \frac{v}{\sqrt{T_0}}(X_t) - \frac{v}{\sqrt{\hat{T}_0}}(\hat{X}_t) \right|^2dt\\
    \leq& 6CT\ \mathbb{E} \int_0^T \left| \frac{v}{\sqrt{T_0}}(X_t) - \frac{v}{\sqrt{\hat{T}_0}}(X_t) \right|^2dt + 6CT\ \mathbb{E} \int_0^T \left| \frac{v}{\sqrt{\hat{T}_0}}(X_t) - \frac{v}{\sqrt{\hat{T}_0}}(\hat{X}_t) \right|^2dt\\
    \leq& 6CT \int_0^T \int_\mathbb{R} \left|\frac{v}{\sqrt{T_0}} - \frac{v}{\sqrt{\hat{T}_0}}\right|^2 d\mu_t dt + 6CTL_2^2 \int_0^T \sup_{s\in[0,t]} \mathbb{E} |X_s - \hat{X}_s|^2 dt.
\end{align*}
Then,
\begin{align*}
    \mathbb{E} \sup_{t\in[0,T]} |X_t-\hat{X}_t|^2 
    \leq& 3\mathbb{E} |X_0-\hat{X}_0|^2 +  6T \int_0^T \int_\mathbb{R} \left|\left(\frac{v\hat{T}_1}{\hat{T}_0} + \frac{v^2\partial_x \hat{T}_0}{2\hat{T}_0^2}\right)-\left(\frac{vT_1}{T_0}+\frac{v^2\partial_x T_0}{2T_0^2}\right)\right|^2 d\mu_t dt\\
    &+ 6TL_1^2 \int_0^T \mathbb{E}\sup_{s\in[0,t]} |X_s-\hat{X}_s|^2dt + 6CT \int_0^T \int_\mathbb{R} \left|\frac{v}{\sqrt{T_0}} - \frac{v}{\sqrt{\hat{T}_0}}\right|^2 d\mu_t dt\\
    &+ 6CTL_2^2 \int_0^T \sup_{s\in[0,t]} \mathbb{E} |X_s - \hat{X}_s|^2 dt.
\end{align*}
Since $\sup_{s\in[0,t]} \mathbb{E} |X_s - \hat{X}_s|^2 \leq \mathbb{E} \sup_{s\in[0,t]}  |X_s - \hat{X}_s|^2$, by Gr$\ddot{\text{o}}$nwall's inequality
\begin{align*}
    \mathbb{E} \sup_{t\in[0,T]} |X_s-\hat{X}_s|^2 \leq& \Bigg(3\mathbb{E} |X_0-\hat{X}_0|^2 +  6T \int_0^T \int_\mathbb{R} \left|\left(\frac{v\hat{T}_1}{\hat{T}_0} + \frac{v^2\partial_x \hat{T}_0}{2\hat{T}_0^2}\right)-\left(\frac{vT_1}{T_0}+\frac{v^2\partial_x T_0}{2T_0^2}\right)\right|^2 d\mu_t dt\\
    &+ 6CT \int_0^T \int_\mathbb{R} \left|\frac{v}{\sqrt{T_0}} - \frac{v}{\sqrt{\hat{T}_0}}\right|^2 d\mu_t dt\Bigg) \exp{(6T^2 L_1^2 + 6CT^2L_2^2)}.
\end{align*}
Since $d_2^2(\mu(0), \hat{\mu}(0)) = E|X_0 − \hat{X}_0|^2$, we can get the conclusion:
\begin{align}
    d_2^2(\mu(T), \hat{\mu}(T)) \leq& C^0_1 d_2^2(\mu(0), \hat{\mu}(0)) +  C^0_2 \int_0^T \int_\mathbb{R} \left|\left(\frac{v\hat{T}_1}{\hat{T}_0} + \frac{v^2\partial_x \hat{T}_0}{2\hat{T}_0^2}\right)-\left(\frac{vT_1}{T_0}+\frac{v^2\partial_x T_0}{2T_0^2}\right)\right|^2 d\mu_t dt \nonumber \\
    &+ C^0_3 \int_0^T \int_\mathbb{R} \left|\frac{v}{\sqrt{T_0}} - \frac{v}{\sqrt{\hat{T}_0}}\right|^2 d\mu_t dt.\nonumber
\end{align}

\noindent \textup{\textbf{(\romannumeral2)}}: Taking derivative with respect to $t$ on both sides of \eqref{limiteqtion_2}, we can get
\begin{equation*}
    v\partial_t\partial_x \rho_0 = -2T_0(x)\partial_t J_0 - 2T_1(x)\partial_t\rho_0.
\end{equation*}
From Eq. \eqref{twoflux_1_1}, $\partial_t \rho_0 = -v\partial_x J_0$, we have
\begin{equation}\label{equation_J}
    -v^2\partial_x^2 J_0 = -2T_0(x)\partial_t J_0+2vT_1(x)\partial_x J_0.
\end{equation}
Similarly, for $\hat{J}_0$,
\begin{equation}\label{equation_Jhat}
    -v^2\partial_x^2 \hat{J}_0 = -2\hat{T}_0(x)\partial_t \hat{J}_0+2v\hat{T}_1(x)\partial_x \hat{J}_0.
\end{equation}
Taking difference between \eqref{equation_J} and \eqref{equation_Jhat}, we get
\begin{align}\label{equation _of_difference_of_J}
    \partial_t \bar{J}_0 =& \frac{vT_1}{T_0}\partial_x J_0-\frac{v\hat{T}_1}{\hat{T}_0}\partial_x \hat{J}_0+\frac{v^2\partial_x^2 J_0}{2T_0}-\frac{v^2\partial_x^2 \hat{J}_0}{2\hat{T}_0} \nonumber\\
    =& \frac{v\hat{T}_1}{\hat{T}_0}\partial_x \bar{J}_0 + \frac{v^2}{2\hat{T}_0}\partial_x^2 \bar{J}_0 + \left(\frac{vT_1}{T_0}-\frac{v\hat{T}_1}{\hat{T}_0}\right)\partial_x J_0 + \left(\frac{v^2}{2T_0}-\frac{v^2}{2\hat{T}_0}\right)\partial_x^2 J_0.
\end{align}
Multiply both sides of Eq. \eqref{equation _of_difference_of_J} by $\bar{J}_0$ and integrate with respect to $x$ over $\mathbb{R}$, we have
\begin{align}
    \int_\mathbb{R} \bar{J}_0\partial_t \bar{J}_0 dx =& \int_\mathbb{R} \frac{v\hat{T}_1}{\hat{T}_0}\bar{J}_0\partial_x \bar{J}_0\ dx + \int_\mathbb{R} \frac{v^2}{2\hat{T}_0}\bar{J}_0\partial_x^2 \bar{J}_0\ dx \nonumber\\
    &+ \int_\mathbb{R}\left[ \bar{J}_0\left(\frac{vT_1}{T_0}-\frac{v\hat{T}_1}{\hat{T}_0}\right)\partial_x J_0+ \bar{J}_0\left(\frac{v^2}{2T_0}-\frac{v^2}{2\hat{T}_0}\right)\partial_x^2 J_0\ \right] dx \nonumber\\
    :=& A_{\uppercase\expandafter{\romannumeral1}}\ + A_{\uppercase\expandafter{\romannumeral2}} + A_{\uppercase\expandafter{\romannumeral3}}.\nonumber
\end{align}

In the subsequent part, we control each of them.
\begin{itemize}
    \item By using integration by parts and the boundary condition for $\bar{J}_0$, we get
    \begin{align}
        |A_{\uppercase\expandafter{\romannumeral1}}| =& \left|\int_\mathbb{R} \frac{v\hat{T}_1}{\hat{T}_0}\bar{J}_0\partial_x \bar{J}_0\ dx \right|= \left|\frac{1}{2}\int_\mathbb{R} \frac{v\hat{T}_1}{\hat{T}_0}\partial_x\bar{J}_0^2\ dx\right|= \left|\frac{1}{2} \frac{v\hat{T}_1}{\hat{T}_0}\bar{J}_0^2 \Bigg|_{-\infty}^{+\infty}-\frac{1}{2}\int_\mathbb{R} \bar{J}_0^2 \partial_x(\frac{v\hat{T}_1}{\hat{T}_0})\ dx\right|\nonumber\\
        \leq& \frac{1}{2}\left\lVert \partial_x(\frac{v\hat{T}_1}{\hat{T}_0})\right\rVert_{L^{\infty}_x} \|\bar{J}_0\|_{L^2_x}^2. \nonumber
    \end{align}
    \item Using the integration by parts and boundary condition for $\bar{J}_0$,
    \begin{align}
        A_{\uppercase\expandafter{\romannumeral2}} =& \int_\mathbb{R} \frac{v^2}{2\hat{T}_0}\bar{J}_0\partial_x^2 \bar{J}_0\ dx = -\frac{1}{2}\int_\mathbb{R} \partial_x\left(\frac{v^2}{\hat{T}_0}\right)\bar{J}_0\partial_x \bar{J}_0\ dx -\frac{1}{2}\int_\mathbb{R} \frac{v^2}{\hat{T}_0}(\partial_x \bar{J}_0)^2dx. \nonumber
    \end{align}
    Then, from H$\ddot{\text{o}}$lder's inequality and Young's inequality
    \begin{align}
        A_{\uppercase\expandafter{\romannumeral2}} \leq& \frac{1}{2} \left\lVert \partial_x\left(\frac{v^2}{\hat{T}_0}\right) \right\rVert_{L^{\infty}_x} \|\bar{J}_0\|_{L^2_x} \|\partial_x \bar{J}_0\|_{L^2_x}-\frac{v^2}{2\|\hat{T}_0\|_{L^\infty}} \|\partial_x \bar{J}_0\|^2_{L^2_x} \nonumber\\
        \leq& \frac{1}{4} \left\lVert \partial_x\left(\frac{v^2}{\hat{T}_0}\right) \right\rVert_{L^{\infty}_x} \left(\frac{1}{c}\|\bar{J}_0\|^2_{L^2_x} + c\|\partial_x \bar{J}_0\|^2_{L^2_x}\right)-\frac{v^2}{2\|\hat{T}_0\|_{L^\infty}} \|\partial_x \bar{J}_0\|^2_{L^2_x}.\nonumber
    \end{align}
    If we take $c = (\|\hat{T}_0\|_{L^\infty} \left\lVert \partial_x\left(\frac{1}{\hat{T}_0}\right) \right\rVert_{L^{\infty}_x})^{-1}$, we can get
    \begin{equation*}
        A_{\uppercase\expandafter{\romannumeral2}} \leq \frac{v^2}{4c} \left\lVert \partial_x\left(\frac{1}{\hat{T}_0}\right) \right\rVert_{L^{\infty}_x}  \|\bar{J}_0\|_{L^2_x}^2 - \frac{v^2}{4\|\hat{T}_0\|_{L^\infty}} \|\partial_x \bar{J}_0\|^2_{L^2_x} \leq \frac{v^2}{4c} \left\lVert \partial_x\left(\frac{1}{\hat{T}_0}\right) \right\rVert_{L^{\infty}_x}  \|\bar{J}_0\|_{L^2_x}^2.
    \end{equation*}
    \item By H$\ddot{\text{o}}$lder's inequality, 
    \begin{align}
        |A_{\uppercase\expandafter{\romannumeral3}}| =& \left|\int_\mathbb{R} \bar{J}_0\left(\frac{vT_1}{T_0}-\frac{v\hat{T}_1}{\hat{T}_0}\right)\partial_x J_0\ +\bar{J}_0\left(\frac{v^2}{2T_0}-\frac{v^2}{2\hat{T}_0}\right)\partial_x^2 J_0\ dx\right|\nonumber\\
        \leq& \|\bar{J}_0\|_{L^2_x} \left\lVert\left(\frac{vT_1}{T_0}-\frac{v\hat{T}_1}{\hat{T}_0}\right)\partial_x J_0+\left(\frac{v^2}{2T_0}-\frac{v^2}{2\hat{T}_0}\right)\partial_x^2 J_0\right\rVert_{L^2_x}.\nonumber
    \end{align}
\end{itemize}

Therefore, since $\partial_x(\frac{\hat{T}_1}{\hat{T}_0}), \hat{T}_0, \partial_x\left(\frac{1}{\hat{T}_0}\right)\in L^\infty(\mathbb{R})$ and $\left(\frac{vT_1}{T_0}-\frac{v\hat{T}_1}{\hat{T}_0}\right)\partial_x J_0+\left(\frac{v^2}{2T_0}-\frac{v^2}{2\hat{T}_0}\right)\partial_x^2 J_0 \in L^2(\mathbb{R})$, we can conclude that
\begin{equation*}
    \frac{1}{2}\frac{d}{dt}\|\bar{J}_0\|_{L^2_x}^2(t) \leq C \|\bar{J}_0\|_{L^2_x}^2(t) + D(t) \|\bar{J}_0\|_{L^2_x}(t),
\end{equation*}
which means
\begin{equation*}
    \frac{d\|\bar{J}_0\|_{L^2_x} (t)}{dt}\leq C \|\bar{J}_0\|_{L^2_x}(t) + D(t),
\end{equation*}
where
\begin{align}
    C =& \frac{1}{2}\left\lVert \partial_x(\frac{v\hat{T}_1}{\hat{T}_0})\right\rVert_{L^{\infty}_x} + \frac{v^2\|\hat{T}_0\|_{L^\infty}}{4}\left\lVert \partial_x\left(\frac{1}{\hat{T}_0}\right) \right\rVert^2_{L^{\infty}_x},\\
    D(t) =& \left\lVert\left(\frac{vT_1}{T_0}-\frac{v\hat{T}_1}{\hat{T}_0}\right)\partial_x J_0 + \left(\frac{v^2}{2T_0}-\frac{v^2}{2\hat{T}_0}\right)\partial_x^2 J_0\right\rVert_{L^2_x}.\nonumber
\end{align}
By Gr$\ddot{\text{o}}$nwall's inequality and H$\ddot{\text{o}}$lder's inequality, we have
\begin{align}\label{difference_of_J}
    \|\bar{J}_0(x,T)\|_{L^2_x}\leq& \exp{(C T)} \left(\|\bar{J}_0(x,0)\|_{L^2_x} + \int_0^T D(t)dt \right)
    = \exp{(C T)}\left(\|\bar{J}_0(x,0)\|_{L^2_x} + \left\lVert D(t)\right\rVert_{L^1_t}\right)\nonumber\\
    \leq&\exp{(C T)} \left( \|\bar{J}_0(x,0)\|_{L^2_x} + \sqrt{T}\left\lVert D(t)\right\rVert_{L^2_t} \right).
\end{align}
In summary, squaring both sides and applying the Cauchy–Schwarz inequality, we can get the conclusion:
\begin{align}
    \|\bar J_0(x,T)\|^2_{L^2_x}
    \leq C^0_4\|\bar J_0(x,0)\|^2_{L^2_x}+ C^0_5\int_0^T\int_\mathbb{R}\left(\left(\frac{vT_1}{T_0}-\frac{v\hat{T}_1}{\hat{T}_0}\right)\partial_x J_0 + \left(\frac{v^2}{2T_0}-\frac{v^2}{2\hat{T}_0}\right)\partial_x^2 J_0\right)^2 dx dt.\nonumber
\end{align}

\end{proof}

\section{Numerical results}
\label{sec:Numerical results}
\subsection{Optimization problem formulation}

As in \cite{carrillo2025sparse}, we assume that $T_0$ and $T_1$ are composed of a set of basis functions. Meanwhile, considering the ill-posedness of the inverse problem \cite{lang2023identifiability,lu2021learning,li2021identifiability,tang2024identifiability}, we regularize the loss function by promoting sparsity and apply the PartInv (Partial Inversion) algorithm to address the optimization problems, as was done in \cite{carrillo2025sparse}.

Specifically, we consider $\hat{T}_0$ and $\hat{T}_1$ to be composed of some basis functions, that is,  we let $\hat{T}_1 = \sum_{i=1}^{n} c_i \Psi_i$, for $i=1,\cdots, n$ and define
\begin{align*}
    A_{ij} &= \frac{1}{T}\int_0^{T}\int_\mathbb{R} \Psi_i(x) \Psi_j(x) \frac{\rho}{T^2_0}dxdt,\\
    b_i &= -\frac{1}{2 T}\int_0^{T}\int_\mathbb{R} \frac{\Psi_i(x)}{T^2_0(x)} \left[ \epsilon^2 \partial_t J + v\partial_x \rho +2T_0(x)J \right] dxdt. 
\end{align*}
Since in practice, the data obtained from experiments are only discrete measurements of $\rho$ and $J$ at given time and spatial grid points, while for the basis functions, we can set them as continuous functions of the spatial variable $x$. Therefore, we use integration by parts to rewrite $b_i$ into the following form:
\begin{align}
    b_i 
    =&-\frac{\epsilon^2}{2 T}\int_\mathbb{R} \frac{\Psi_i(x)}{T^2_0 (x)} \left[ J(x,T)-J(x,0) \right] dx-\frac{1}{T}\int_0^T \int_\mathbb{R} \frac{\Psi_i(x)}{T_0 (x)} J dxdt \nonumber\\
    &- \frac{v}{2T}\int_0^T \left( \frac{\Psi_i}{T^2_0} \rho \right) \Bigg|_{-\infty}^{+\infty} dt + \frac{v}{2 T}\int_0^{T}\int_\mathbb{R} \partial_x\left(\frac{\Psi_i}{T^2_0}\right) \rho dxdt.\nonumber
\end{align}
The discrete version of the loss function now becomes:
\begin{equation*}
    \tilde{\mathcal{E}}[\hat{T}_1] = c^T A c - 2b^T c.
\end{equation*}

Similarly, we can also let $\hat{T}_0 = \sum_{i=1}^{n} \tilde{c}_i \Psi_i$, for $i=1,\cdots, n$ and since $\partial_t \rho + v \partial_x J = 0$ in Eq. \eqref{twoflux_1_1}, we can define 
\begin{align}
    \tilde{A}_{ij} =& \frac{1}{T}\int_0^{T}\int_\mathbb{R} \Psi_i(x) \Psi_j(x) J^2 dxdt,\nonumber\\
    \tilde{b}_i =& -\frac{1}{2T}\int_0^{T}\int_\mathbb{R} \Psi_i(x)  J \left[ \epsilon^2 \partial_t J + v\partial_x \rho +2 T_1(x) \rho \right] dxdt\nonumber\\
    =& -\frac{\epsilon^2}{4T}\int_\mathbb{R} \Psi_i(x) J^2  \Bigg|_{0}^{T}dx - \frac{v}{2T}\int_0^{T} \Psi_i(x) J \rho  \Bigg|_{-\infty}^{+\infty} dt \nonumber\\
    &+ \frac{v}{2T}\int_0^{T} \int_{\mathbb{R}} \partial_x (\Psi_i) J\rho dx dt - \frac{1}{4T}\int_{\mathbb{R}}\Psi_i(x) \rho^2  \Bigg|_{0}^{T}dx  - \frac{1}{T}\int_0^{T}\int_\mathbb{R}  T_1(x)\Psi_i(x) J \rho dxdt.\nonumber
\end{align}
The discrete loss function to recover $T_0$ now becomes:
\begin{equation*}
    \tilde{\mathcal{G}}[\hat{T}_0] = \tilde{c}^T \tilde{A} \tilde{c} - 2\tilde{b}^T \tilde{c}.
\end{equation*}

Let $R$ be chosen large enough such that the essential supports of $\rho$ and $J$ are contained in $[0, T] \times [-R,R]$, with $T,R>0$.
In the subsequent numerical tests, we present the fully discretized optimization problem in a computational domain $[0, T] \times [-R,R]$.

Given the space-time mesh size to be $(\Delta x, \Delta t)$, let's denote $t_l = l\Delta t$, where $l$ ranges from $0$ to $L = T/\Delta t$, and $x_m = m\Delta x$, where $m$ spans from $-M$ to $M$, with $M$ defined as $R/\Delta x$. For any function $f(x,t)$, let $ f_m^l\approx  f(x_m,t_l) $ and $f_m(t) \approx f(x_m,t)$. The given discrete input data are $\{ \rho_m^l:=\rho(x_m,t_l) \}_{m=-M, l=0}^{m=M, l=L}$, $\{ J_m^l:=J(x_m,t_l) \}_{m=-M, l=0}^{m=M, l=L}$.

Then, to recover $T_1$, the matrix $A$ and vector $b$ will take the following form:
\begin{align}
    A_{ij} \approx& \frac{1}{T} \sum_{m=-M, l=1}^{M, L} \Psi_{i,m} \Psi_{j,m} \frac{\rho_m^l}{T_{0,m}^2} \Delta x \Delta t,\\
    b_i \approx& -\frac{1}{2 T} \left[ \sum_{m=-M}^{M}  \epsilon^2 \frac{\Psi_{i,m}}{T_{0,m}^2} (J_m^T-J_m^0) \Delta x +  \sum_{l=1}^{L} v  \left(\frac{\Psi_{i,M}}{T_{0,M}^2} \rho_M^l - \frac{\Psi_{i,-M}}{T_{0,-M}^2} \rho_{-M}^l\right) \Delta t \right. \nonumber \\
    &\left. + \sum_{m=-M, l=1}^{M, L}\left( -v \rho_m^l\partial_x \left( \frac{\Psi_i (x_m)}{T_0^2(x_m)}\right) + 2 \frac{\Psi_{i,m}}{T_{0,m}}J_m^l \right) \Delta x  \Delta t \right] .\nonumber
\end{align}

Similarly, to recover $T_0$, the matrix $\tilde{A}$ and vector $\tilde{b}$ will take the following form:
\begin{align}
    \tilde{A}_{ij} \approx& \frac{1}{T} \sum_{m=-M, l=1}^{M, L} \Psi_{i,m} \Psi_{j,m} (J_m^l)^2 \Delta x \Delta t,\\
    \tilde{b}_i \approx& -\frac{1}{4T} \left[ \sum_{m=-M}^{M} \epsilon^2 \Psi_{i,m} \bigg((J_m^L)^2 - (J_m^0)^2 \bigg) \Delta x +  \sum_{l=1}^{L} 2v  \left(\Psi_{i,M}J_M^l \rho_M^l - \Psi_{i,-M}J_{-M}^l \rho_{-M}^l\right) \Delta t \right. \nonumber \\
    & - \sum_{m=-M, l=1}^{M, L} 2v\partial_x(\Psi_{i,m})J_m^l \rho_m^l\Delta x  \Delta t + \sum_{m=-M}^{M} \Psi_{i,m} \bigg((\rho_m^L)^2 - (\rho_m^0)^2 \bigg) \Delta x  \nonumber\\
    &\left. + \sum_{m=-M, l=1}^{M, L} 4 T_{1,m}\Psi_{i,m} J_m^l  \rho_m^l  \Delta x  \Delta t \right]. \nonumber
\end{align}

\subsection{Data generation}
The data are generated by solving the equations \eqref{twoflux} and \eqref{twoflux_1} using an asymptotic preserving (AP) scheme proposed in \cite{carrillo2013asymptotic}, with a space-time mesh size of $(\Delta x, \Delta t)$. One can refer Appendix. \ref{Appendix_B} for the details of the AP discretization. Let $f^+(x,0)=f^-(x,0)$, for $x\in[-R,R]$. Reflective boundary conditions are applied to both $f^+$ and $f^−$, such that
\begin{equation*}
    f^+(\pm R,t)=f^-(\pm R,t), \quad \forall t \in [0,T].
\end{equation*}
Thus, the boundary conditions for Eq. \eqref{twoflux_1} are for the flux density $J$, such that
\begin{equation*}
    J(-R,t)=J(R,t)=0, \quad \forall t \in [0,T].
\end{equation*} 
In this way, not only does it satisfy the boundary condition of $J$ required in the Section \ref{sec:Stability Analysis} as being 0, but it also ensures that the solution maintains conservation of mass throughout the entire time evolution process. 

Specifically, the data is produced with the parameters listed in Table \ref{Parameters_to_produce_data}, where $\mathcal{N}(\mu, \sigma^2)$ represents a Gaussian distribution with a mean of $\mu$ and a variance of $\sigma^2$.

\begin{table}[h]
    \centering
    \begin{tabular}{|c|c|c|c|c|c|}
        \hline
        $\Delta t$ & $\Delta x$ & Time domain & Spatial domain & Initial condition  \\ 
        \hline
        $1.25\cdot 10^{-5}$ & $5\cdot 10^{-3}$ & $[0,1]$ & $[-2,2]$ & $\rho: \mathcal{N}(0,0.5^2) $, $J: 0$  \\
        \hline
    \end{tabular}  
    \caption{Parameters to generate the solution data for recovery, using an numerical scheme that has uniform accuracy and stability with respect to $\epsilon$. The details of the numerical discretization is shown in Appendix. \ref{Appendix_B}.} 
    \label{Parameters_to_produce_data}
\end{table}

Since the real experimental data are noisy, to test the robustness of our recovery method, we add noise to the obtained numerical solution. The data used in the loss function are given by 
\begin{equation*}
    \{\rho(x_m,t_l) + \epsilon_m^l \}_{m=-M, l=0}^{m=M, l=L}, \quad\ \{ J(x_m,t_l) + \tilde{\epsilon}_m^l \}_{m=-M, l=0}^{m=M, l=L}.
\end{equation*}
Here $\{\epsilon_m^l \}_{m=-M, l=0}^{m=M, l=L}$ and $\{\tilde{\epsilon}_m^l \}_{m=-M, l=0}^{m=M, l=L}$ are the discrete added noise which satisfies a mean zero normal distribution, such that
\begin{align}
    \epsilon_m^l \textasciitilde \mathcal{N} (0, \sigma_l^2),\quad \sigma_l = s_{n} \times \sqrt{\frac{1}{2M+1} \Sigma_{m=-M}^{m=M} |\rho_m^l|^2},\nonumber\\
     \tilde{\epsilon}_m^l \textasciitilde \mathcal{N} (0, \tilde{\sigma}_l^2),\quad \tilde{\sigma}_l = s_{n} \times \sqrt{\frac{1}{2M+1} \Sigma_{m=-M}^{m=M} |J_m^l|^2},\nonumber
\end{align}
where $s_n$ is the noise level which can be taken as $1\%,\ 5\% $, and so on. 

\subsection{Setup of the test cases}\label{Sec:setup of the text cases}
The performance of the recovery method is systematically evaluated across three representative test cases, each featuring distinct functional forms of the turning kernel components $T_0(x)$ and $T_1(x)$. The true $T_0(x)$, $T_1(x)$, and their corresponding basis function sets $\{\Psi_k\}_{k=1}^{10}$ are specified as follows:
\begin{itemize}
    \item Case 1: 
    \begin{equation*}
        \begin{cases}
            T_0(x)= 3.5(1-0.2 \tanh(x)),\\
            T_1(x)= 3(1-0.2 \tanh(x)),
        \end{cases}
        \ \text{with}\ 
        \begin{cases}
            \Psi_1 = 1,\\
            \Psi_k(x) = \tanh((k-1)x),\ k\geq 2.
        \end{cases}
    \end{equation*}

    \item Case 2: 
    \begin{equation*}
        \begin{cases}
            T_0(x) = \frac{3}{\sqrt{\pi}} \exp(-x^2)+\frac{3}{\sqrt{2 \pi}} \exp(-x^2/2),\\
            T_1(x) = \frac{2}{\sqrt{\pi}} \exp(-x^2)+\frac{2}{\sqrt{2 \pi}} \exp(-x^2/2),
        \end{cases}
        \ \text{with }\
        \Psi_k(x) = \frac{\exp(-k x^2/2)}{\sqrt{\pi}}.
    \end{equation*}

    \item Case 3: 
    \begin{equation*}
        \begin{cases}
            T_0(x) &= \frac{|x|+1}{4},\\
             T_1(x) &= \frac{|x|+1}{8},
        \end{cases}
        \ \text{with }\
        \begin{cases}
            \Psi_1 = 1,\\
            \Psi_k(x) = (\frac{|x|}{2})^{k-1}, k\geq 2.
        \end{cases}
    \end{equation*}
\end{itemize}

The true $T_0(x)$, $T_1(x)$, and basis functions for all three cases are visualized in Figure \ref{fig:T0T1} and Figure \ref{fig:basis_functions}, respectively. The corresponding true coefficient vectors $\mathbf{c} = (c_1, c_2, c_3, \dots)^\intercal$  for the optimization problem are summarized in Table \ref{true_solutions}.

\begin{figure}[!htbp]  
    \centering 
    \begin{subfigure}{0.32\textwidth}
        \centering
        \includegraphics[width=\linewidth]{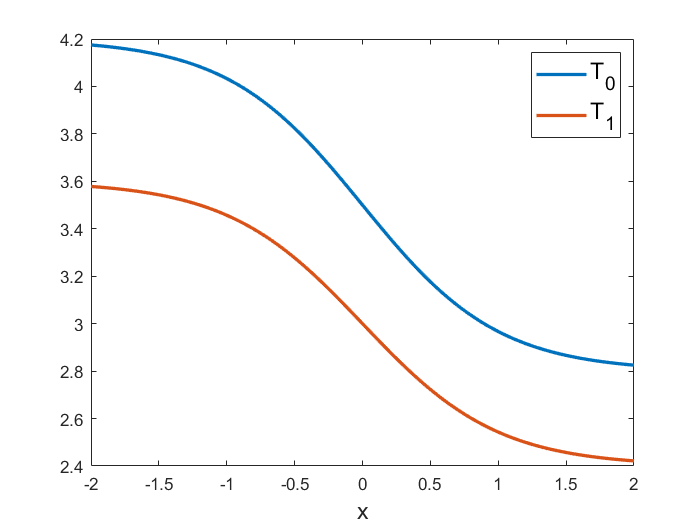} 
        \vspace{-2em}  
        \caption{Case 1}
    \end{subfigure}
    \begin{subfigure}{0.32\textwidth}
        \centering
        \includegraphics[width=\linewidth]{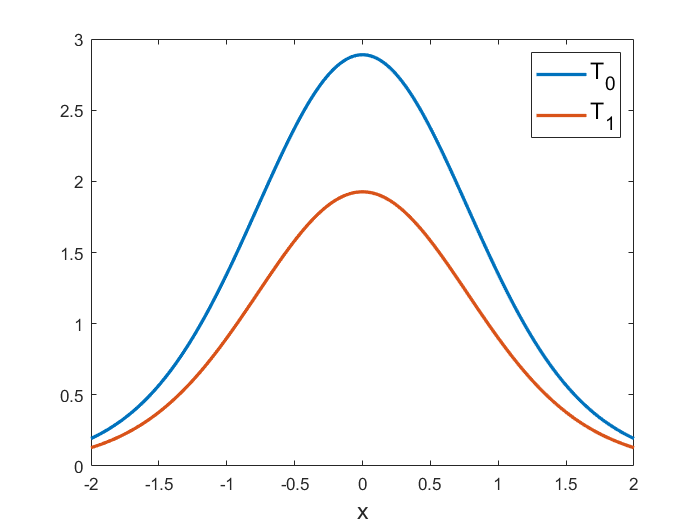}  
        \vspace{-2em}  
        \caption{Case 2}
    \end{subfigure}
    \begin{subfigure}{0.32\textwidth}
        \centering
        \includegraphics[width=\linewidth]{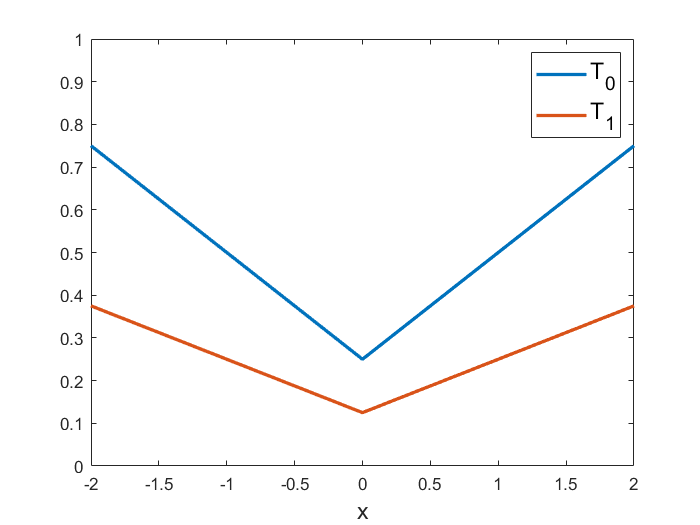}  
        \vspace{-2em} 
        \caption{Case 3}
    \end{subfigure}
    \caption{True $T_0(x)$ and $T_1(x)$ used in the three test cases. (a) Case 1: $T_0(x)$ and $T_1(x)$ are smooth monotone decreasing functions constructed from a constant term and a hyperbolic-tangent term. (b) Case 2: $T_0(x)$ and $T_1(x)$ are symmetric functions formed by linear combinations of two Gaussian profiles with different widths.  (c) Case 3: $T_0(x)$ and $T_1(x)$ are piecewise-linear functions proportional to $|x|+1$. }
    \label{fig:T0T1} 
\end{figure}
\begin{figure}[!htbp]  
    \centering 
    
    \begin{subfigure}{0.31\textwidth}
        \centering
        \includegraphics[width=\linewidth]{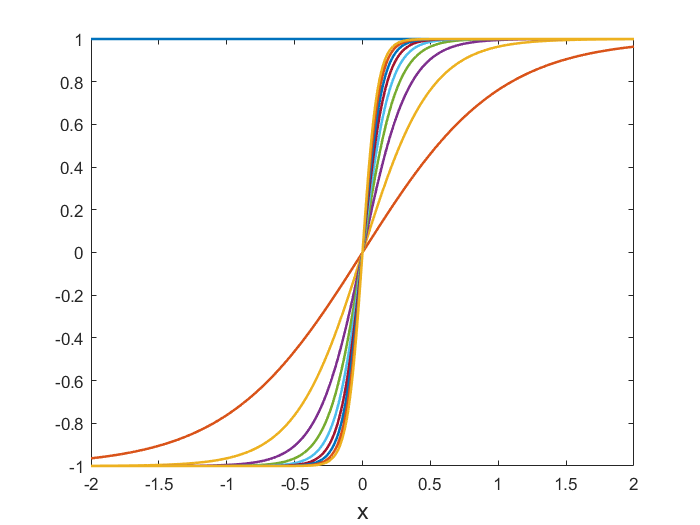} 
        \vspace{-2em}  
        \caption{Case 1}
    \end{subfigure}
    \begin{subfigure}{0.31\textwidth}
        \centering
        \includegraphics[width=\linewidth]{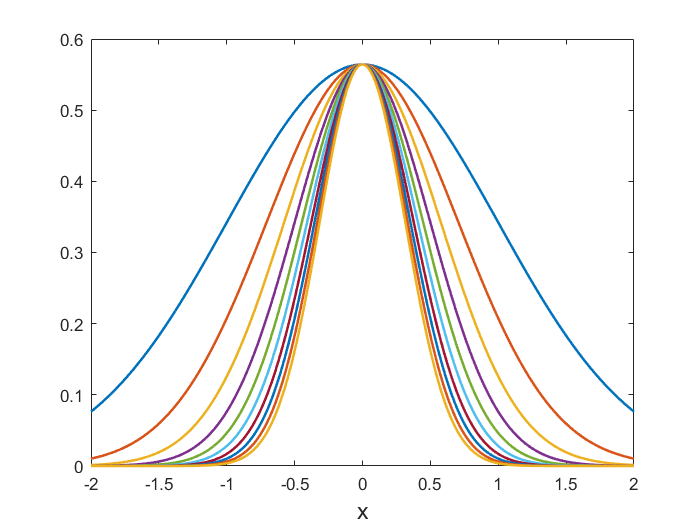}  
        \vspace{-2em} 
        \caption{Case 2}
    \end{subfigure}
    \begin{subfigure}{0.31\textwidth}
        \centering
        \includegraphics[width=\linewidth]{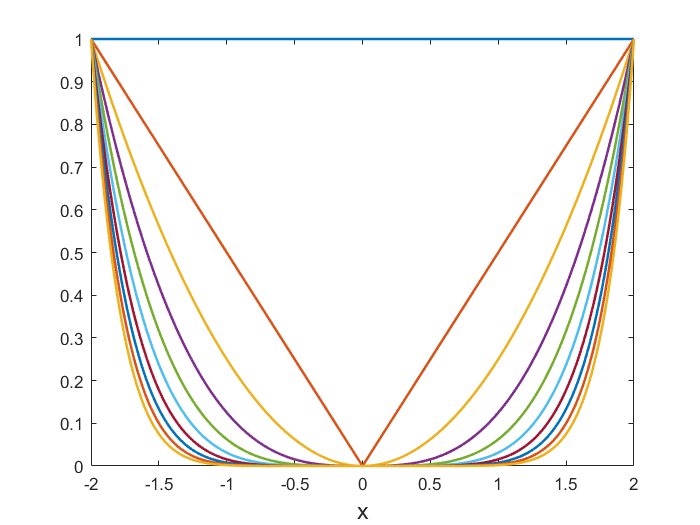}  
        \vspace{-2em}  
        \caption{Case 3}
    \end{subfigure}
    \hfill
    \hspace{-0.7cm}
    \begin{subfigure}{0.06\textwidth}
        \centering
        \includegraphics[width=\linewidth]{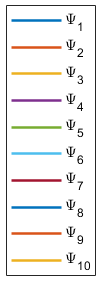} 
        \vspace{2.2em}
    \end{subfigure}
    \begin{minipage}{\textwidth}
    \centering
    \caption{Ten basis functions used in the three test cases. (a) Case 1: a constant basis function and nine hyperbolic-tangent basis functions with different spatial scales. (b) Case 2: symmetric Gaussian basis functions with different widths. (c) Case 3: a constant function and nine symmetric polynomial basis functions of $|x|/2$.}
    \label{fig:basis_functions}
    \end{minipage}
\end{figure}

\begin{table}[H]
    \centering
    \begin{tabular}{c|cc|cc}
        \hline
         & \multicolumn{2}{c|}{$T_0$} & \multicolumn{2}{c}{$T_1$}\\
         \hline
         & $c_1$ & $c_2$ & $c_1$ & $c_2$  \\
         \hline
        \textbf{Case 1}  & 3.5 & -0.7 & 3 & -0.6 \\
        \textbf{Case 2}  & 2.1213 & 3 & 1.4142 & 2 \\
        \textbf{Case 3}  & 0.25 & 0.5 & 0.125 & 0.25 \\
        \hline
    \end{tabular}
    \caption{Coefficient vectors $\boldsymbol{c}=(c_1,\ldots,c_{10})$ for the three test cases. In all cases, only $c_1$ and $c_2$ are nonzero, while $c_3,\ldots,c_{10}$ are set to zero.}

    \label{true_solutions}
\end{table}
\subsection{Algorithm and the Comparison Critiria}\label{sec:algorithm}
We assume that $T_0(x)$ and $T_1(x)$ are typically sparse relative to the given basis functions, and consider the following Basis Pursuit (BP) problem:
\begin{equation}\label{optimization}
    \begin{aligned}
    \mbox{minimize}_{\mathbf{c}\in \mathbb{R}^n} \|\mathbf{c}\|_1,\\
    \mbox{subject to}\ \mathbf{Ac=b}.
    \end{aligned} 
\end{equation}
Same as in \cite{carrillo2025sparse}, we use the PartInv Algorithm proposed in \cite{chen2014guaranteed} to solve \eqref{optimization}. The specific steps of PartInv are summarized in the following Algorithm 1 and one can refer to \cite{chen2014guaranteed} for more details.

\begin{algorithm}\label{algorithm}
    \caption{\textbf{PartInv}: Given $\mathbf{Ac=b}$ where the ground truth is s-sparse, return the best $K$-sparse approximation $\mathbf{\hat{c}}$}
    \LinesNumbered 
    \KwIn{$\mathbf{A}$, $\mathbf{b}$, $K$ (an upper bound on sparsity $s$)}
    $\tilde{\mathbf{c}} \leftarrow \mathbf{A}^*\mathbf{b}$;\quad $I^{(0)}\leftarrow$ indices of the $K$-largest magnitudes of $\tilde{\mathbf{c}}$;\quad $k \leftarrow 0$\\
    \textbf{while} Stopping condition not met \textbf{do}\\
    \qquad $\tilde{\mathbf{c}}_{I^{(k)}} \leftarrow \mathbf{A}_{I^{(k)}}^{+} \mathbf{b}$\\
    \qquad $\mathbf{r} \leftarrow \mathbf{b} - \mathbf{A}_{I^{(k)}} \tilde{\mathbf{c}}_{I^{(k)}}$\\
    \qquad $J^{(k)} \leftarrow \Omega$ \textbackslash $I^{(k)}$\\
    \qquad $\tilde{\mathbf{c}}_{J^{(k)}} \leftarrow \mathbf{A}_{J^{(k)}}^{*} \mathbf{r}$\\
    \qquad $I^{(k+1)}\leftarrow$ indices of the $K$-largest magnitudes of $\tilde{\mathbf{c}}$\\
    \qquad $k \leftarrow k+1$\\
    \textbf{end while}\\
    \textbf{Return} $\hat{\mathbf{c}} = \mathbf{A}_{I^{(k)}}^{+} \mathbf{b}$
\end{algorithm}

In the following part, to check how well the proposed parameter recovery method works and how reliable it is, we use three different comparison criteria. Each one looks at a different aspect of the performance:

\begin{itemize}
    \item[CCI.] \textbf{Direct Coefficient Recovery Accuracy:} The main goal here is to recover the optimization variables — the coefficient vectors $c=(c_1,c_2,\dots,c_{10})$ for both $T_0$ and $T_1$. 
    \item[CCII.]  \textbf{Accuracy of the Reconstructed Function:} Beyond just the coefficients, we also compare the full functions $T_0$ and $T_1$ — built from the recovered coefficients — with the true functions. The algorithm's performance is measured by the relative reconstruction error:
    \begin{equation*}
        E^{i}_{\text{reconst}} = \frac{\|T_i-\hat{T}_i\|_{L^2_x}}{\|T_i\|_{L^2_x}},\quad i=1,2,
    \end{equation*}
    where $\hat{T}_i$ is the approximation of the true solution $T_i$.
    \item[CCIII.] \textbf{Accuracy of the Obtained Forward Solution:} Concerning the stability results from Section \ref{sec:Stability Analysis}, we directly test the recovery by checking the solutions we get. Specifically, the recovered $\hat{T}_0$ and $\hat{T}_1$ (from both PartInv and LS) are put back into the forward system (Eq. \eqref{twoflux_1}) to compute the corresponding density $\hat{\rho}$ and flux $\hat{J}$. We then compare these with the true solutions $\rho$ and $J$ using the mixed error defined as:
    \begin{equation*}
        E^{\rho}_{\text{mixed}} = \min\left\{ \frac{|\rho-\hat{\rho}|}{|\rho|},\ |\rho-\hat{\rho}| \right\}, \quad E^{J}_{\text{mixed}} = \min\left\{ \frac{|J-\hat{J}|}{|J|},\ |J-\hat{J}| \right\}.
    \end{equation*}
\end{itemize}

By looking at multiple aspects — such as direct parameter error, how well the functions are reconstructed, and the stability of the forward solution— we give a complete and careful check of the proposed method's accuracy, reliability, and real-world usefulness for recovering key parameters in the chemotaxis multiscale model.

\subsection{Results}
\subsubsection{Comparison of PartInv and Least Square}
This section compares the sparsity-promoting PartInv algorithm with the standard Least Squares (LS) method. The goal is to recover \(T_0(x)\) and \(T_1(x)\) in three test cases, using noise-free data with \(\epsilon = 1\). The evaluation is based on three main criteria: how well the coefficients are recovered, how accurately the functions are reconstructed, and whether the forward solution remains consistent.

Figures \ref{fig:LS_vs_PI_Case1} to \ref{fig:LS_vs_PI_Case3} show a clear difference between the two methods in estimating the coefficients. LS, which minimizes residual errors without enforcing sparsity, fails to recover the true coefficients in all cases. For example, in Case 1, where the true coefficients for \(T_0\) are \(c_1 = 3.5\) and \(c_2 = -0.7\), LS produces large errors in both coefficients. In contrast, PartInv correctly identifies the exact 2-sparse structure. The same pattern holds for Case 2 and Case 3: LS either overestimates or underestimates the coefficients, while PartInv recovers them with high accuracy.

Beyond coefficient errors, the reconstructed functions \(\hat{T}_0(x)\) and \(\hat{T}_1(x)\) from PartInv match the true profiles very well in all cases (see Figures \ref{fig:LS_vs_PI_Case1} to \ref{fig:LS_vs_PI_Case3}). LS, however, leads to clear distortions. 

From Figures \ref{fig:LS_vs_PI_Case1} to \ref{fig:LS_vs_PI_Case3}, one has observed that the obtained $T_0(x)$ and $T_1(x)$ are the furthest for Case 3, by the two different optimaization method: PartInv and LS. By plugging the recovered coefficients \(\hat{c}\) from both methods back into the forward system (Eq. \eqref{twoflux_1}), we then compare the resulting density \(\hat{\rho}\) and flux \(\hat{J}\) with the true solutions.  Figure \ref{fig:compare_rhoJ_case3} shows this comparison of Case 3, using the mixed error measures \(E_{\text{mixed}}^{\rho}\) and \(E_{\text{mixed}}^{J}\) defined in Section \ref{sec:algorithm}. From Figure \ref{fig:compare_rhoJ_case3}, we see that even though LS performs poorly in recovering the coefficients and functions, its forward solutions \(\hat{\rho}\) and \(\hat{J}\) are still very close to the true ones, with mixed errors as small as \(10^{-3}\). PartInv achieves even smaller errors.

These results show that both methods can produce reliable forward solutions for the kinetic model. However, PartInv is better than LS in two important ways, it accurately recovers the sparse physical parameters \(T_0(x)\) and \(T_1(x)\) and yields forward solutions with smaller errors. 

\begin{figure}[!htbp]  
    \centering

    \begin{subfigure}{0.34\textwidth}
        \centering
        \includegraphics[width=\linewidth]{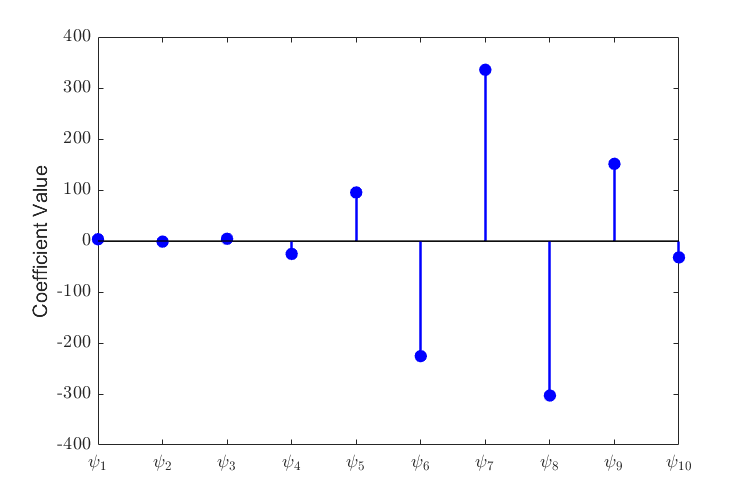} 
        \vspace{-2em} 
        \caption{Coefficient of $T_0$ by LS}
    \end{subfigure}
    \begin{subfigure}{0.34\textwidth}
        \centering
        \includegraphics[width=\linewidth]{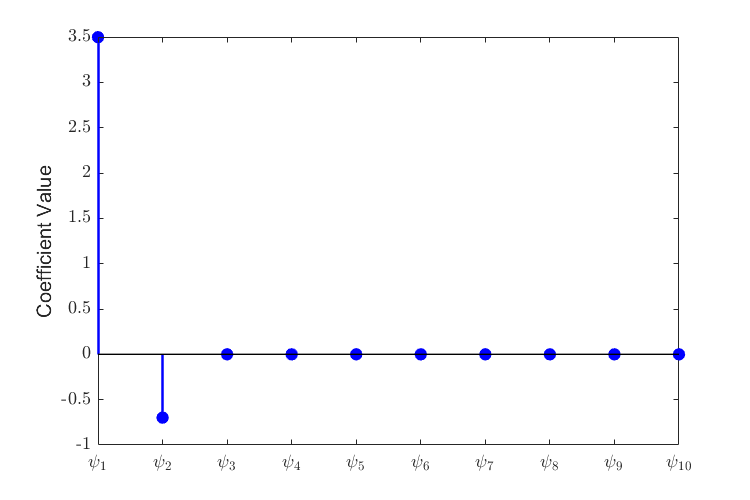}  
        \vspace{-2em}  
        \caption{Coefficient of $T_0$ by PartInv}
    \end{subfigure}
    \begin{subfigure}{0.3\textwidth}
        \centering
        \includegraphics[width=\linewidth]{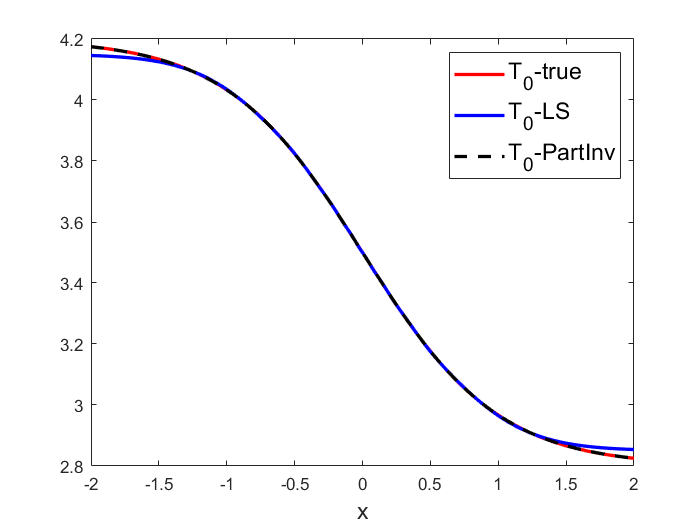}  
        \vspace{-2em}  
        \caption{Comparison of $T_0$}
    \end{subfigure}
    
    \vspace{0.5em}
    
    \begin{subfigure}{0.34\textwidth}
        \centering
        \includegraphics[width=\linewidth]{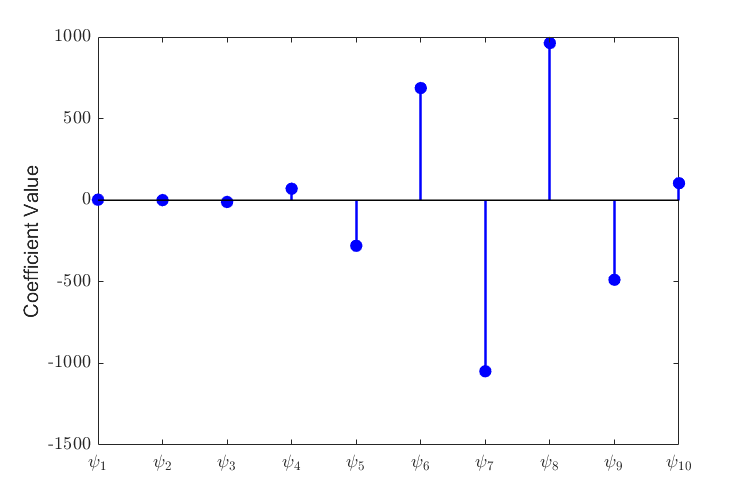}  
        \vspace{-2em}  
        \caption{Coefficient of $T_1$ by LS}
    \end{subfigure}
    \begin{subfigure}{0.34\textwidth}
        \centering
        \includegraphics[width=\linewidth]{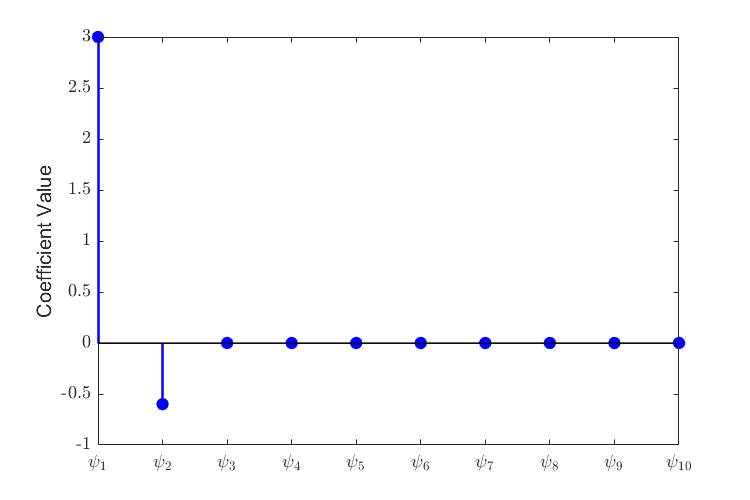} 
        \vspace{-2em}  
        \caption{Coefficient of $T_1$ by PartInv}
    \end{subfigure}
    \begin{subfigure}{0.3\textwidth}
        \centering
        \includegraphics[width=\linewidth]{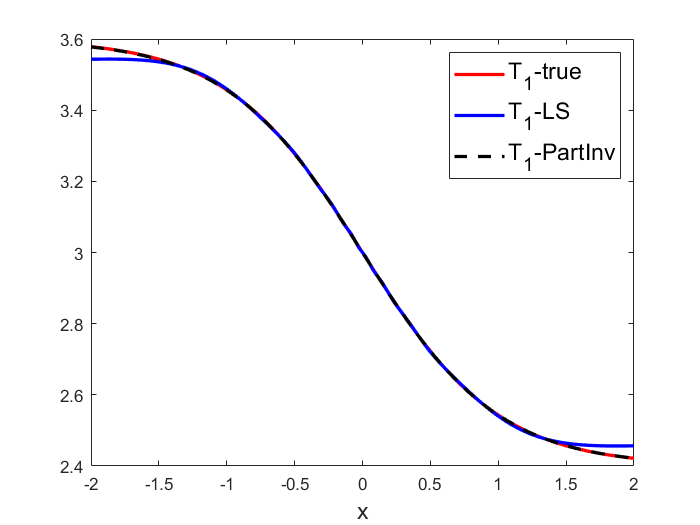}  
        \vspace{-2em}  
        \caption{Comparison of $T_1$}
    \end{subfigure}
    
    \begin{minipage}{\textwidth}
    \centering
    \caption{Recovery results for $T_0(x)$ and $T_1(x)$ in Case 1 using noise-free data with $\epsilon=1$. (a) (d): Coefficients of $T_0$ and $T_1$, respectively, recovered by the LS method. (b) (e): Coefficients of $T_0$ and $T_1$, respectively, recovered by the PartInv method with sparsity parameter $K=2$. (c) (f): Comparisons between the true and reconstructed functions for $T_0$ and $T_1$, respectively. }
    \label{fig:LS_vs_PI_Case1}
    \end{minipage}
    
\end{figure}

\begin{figure}
    \centering
    \begin{subfigure}{0.34\textwidth}
        \centering
        \includegraphics[width=\linewidth]{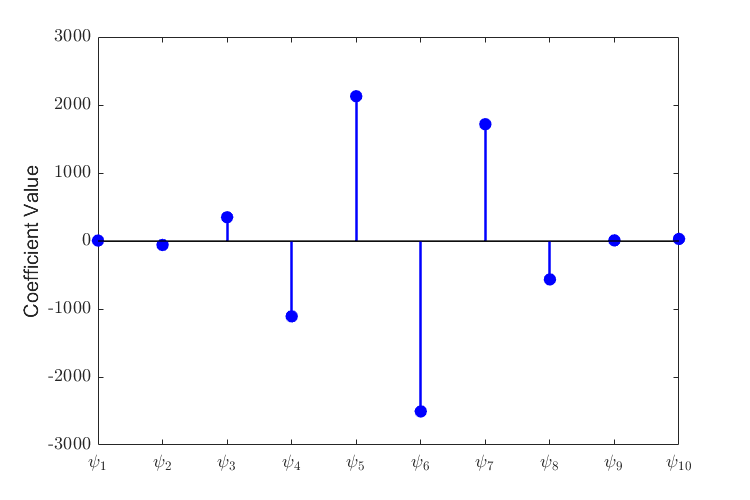} 
        \vspace{-2em}  
        \caption{Coefficient of $T_0$ by LS}
    \end{subfigure}
    \begin{subfigure}{0.34\textwidth}
        \centering
        \includegraphics[width=\linewidth]{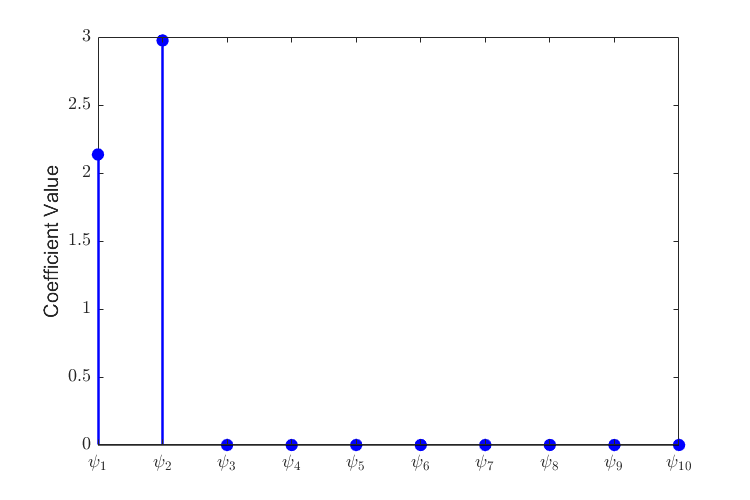}  
        \vspace{-2em} 
        \caption{Coefficient of $T_0$ by PartInv}
    \end{subfigure}
    \begin{subfigure}{0.3\textwidth}
        \centering
        \includegraphics[width=\linewidth]{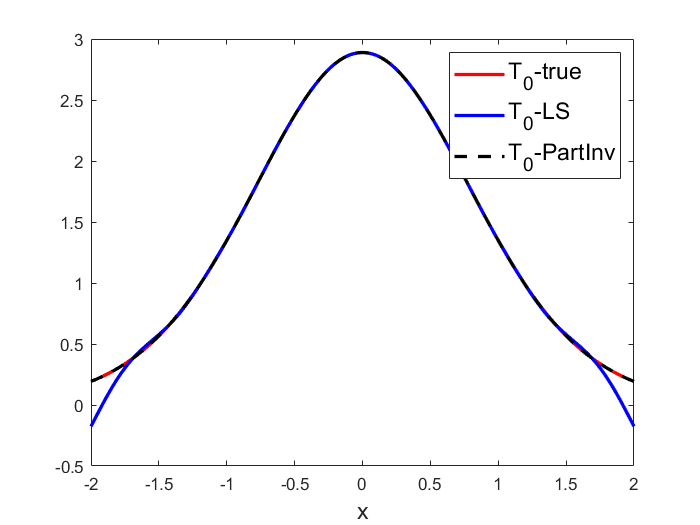}  
        \vspace{-2em}  
        \caption{Comparison of $T_0$}
    \end{subfigure}
    
    \vspace{0.5em}

    \begin{subfigure}{0.34\textwidth}
        \centering
        \includegraphics[width=\linewidth]{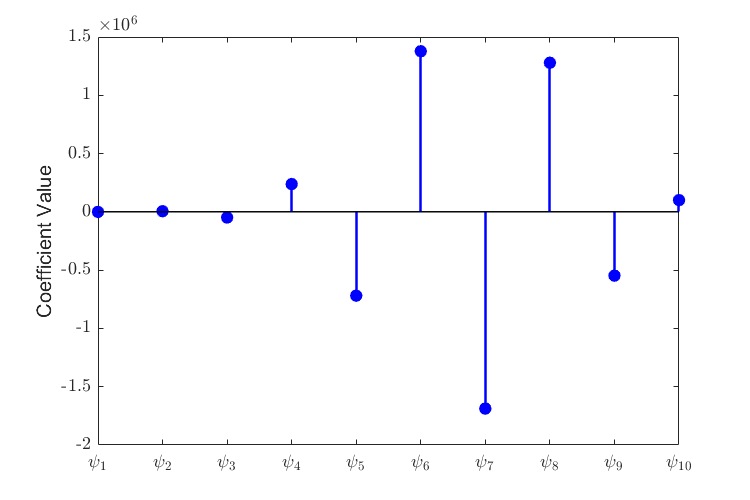}  
        \vspace{-2em}  
        \caption{Coefficient of $T_1$ by LS}
    \end{subfigure}
    \hfill
    \begin{subfigure}{0.34\textwidth}
        \centering
        \includegraphics[width=\linewidth]{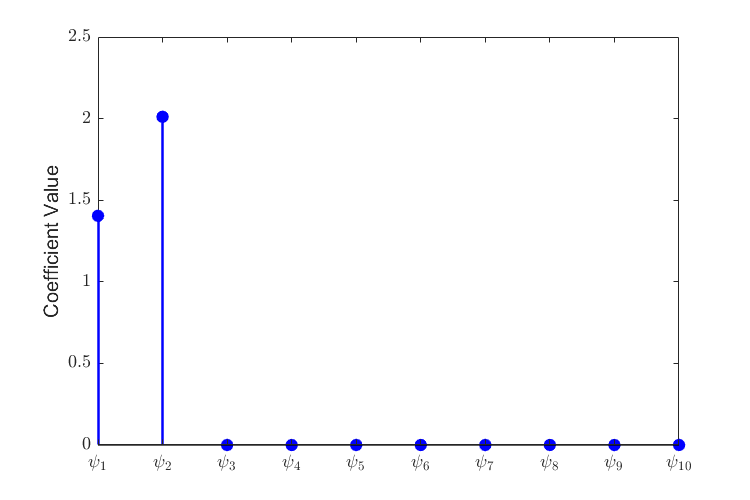}  
        \vspace{-2em}  
        \caption{Coefficient of $T_1$ by PartInv}
    \end{subfigure}
    \hfill
    \begin{subfigure}{0.3\textwidth}
        \centering
        \includegraphics[width=\linewidth]{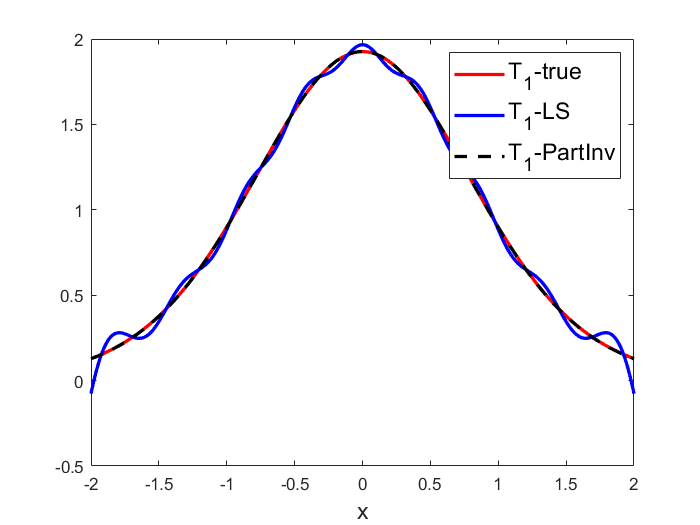}  
        \vspace{-2em}  
        \caption{Comparison of $T_1$}
    \end{subfigure}
    \caption{Recovery results for $T_0(x)$ and $T_1(x)$ in Case 2 using noise-free data with $\epsilon=1$. (a) (d): Coefficients of $T_0$ and $T_1$, respectively, recovered by the LS method. (b) (e): Coefficients of $T_0$ and $T_1$, respectively, recovered by the PartInv method with sparsity parameter $K=2$. (c) (f): Comparisons between the true and reconstructed functions for $T_0$ and $T_1$, respectively. }
    \label{fig:LS_vs_PI_Case2}
\end{figure}

\begin{figure} 
    \centering
    \begin{subfigure}{0.34\textwidth}
        \centering
        \includegraphics[width=\linewidth]{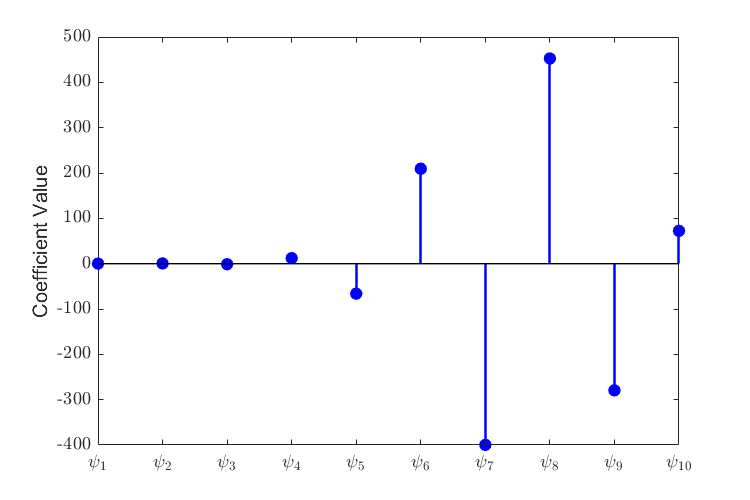}  
        \vspace{-2em} 
        \caption{Coefficient of $T_0$ by LS}
    \end{subfigure}
    \hfill
    \begin{subfigure}{0.34\textwidth}
        \centering
        \includegraphics[width=\linewidth]{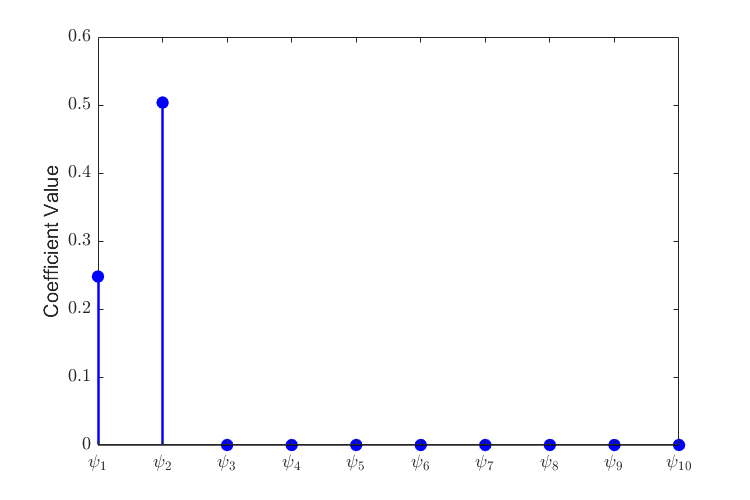}  
        \vspace{-2em}  
        \caption{Coefficient of $T_0$ by PartInv}
    \end{subfigure}
    \begin{subfigure}{0.3\textwidth}
        \centering
        \includegraphics[width=\linewidth]{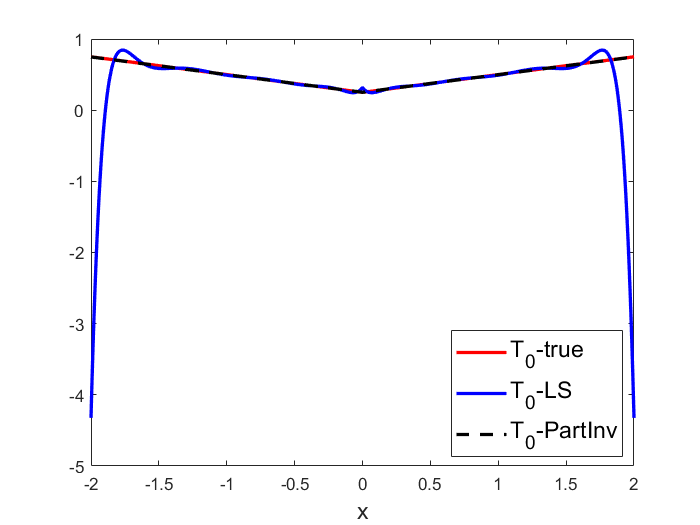} 
        \vspace{-2em}  
        \caption{Comparison of $T_0$}
    \end{subfigure}
    
    \vspace{0.5em}

    \begin{subfigure}{0.34\textwidth}
        \centering
        \includegraphics[width=\linewidth]{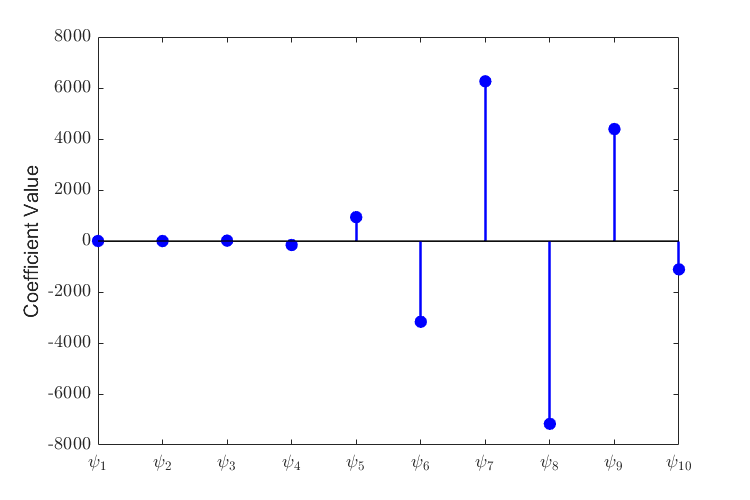}  
        \vspace{-2em} 
        \caption{Coefficient of $T_1$ by LS}
    \end{subfigure}
    \hfill
    \begin{subfigure}{0.34\textwidth}
        \centering
        \includegraphics[width=\linewidth]{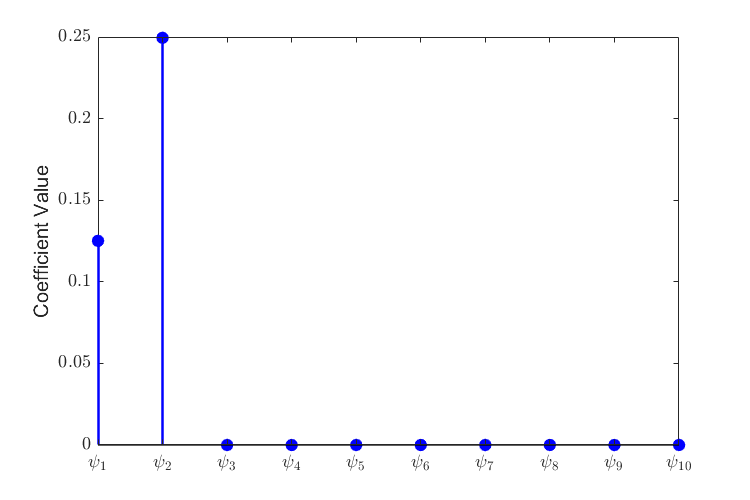}  
        \vspace{-2em}  
        \caption{Coefficient of $T_1$ by PartInv}
    \end{subfigure}
    \hfill
    \begin{subfigure}{0.3\textwidth}
        \centering
        \includegraphics[width=\linewidth]{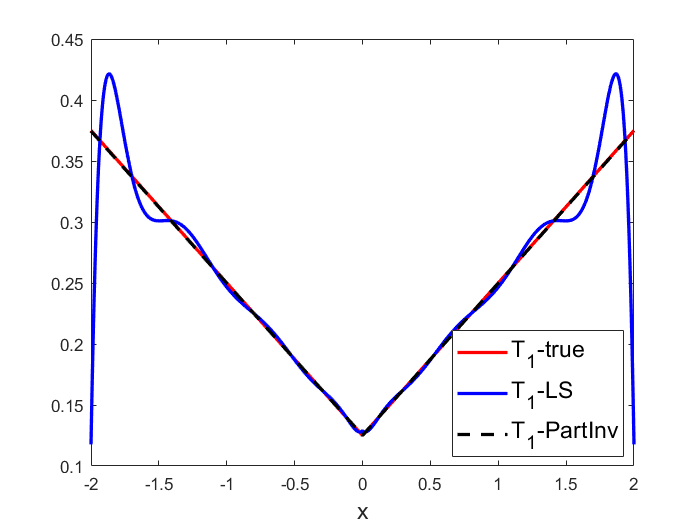}  
        \vspace{-2em}  
        \caption{Comparison of $T_1$}
    \end{subfigure}
    
    \begin{minipage}{\textwidth}
    \centering
    \caption{Recovery results for $T_0(x)$ and $T_1(x)$ in Case 3 using noise-free data with $\epsilon=1$. (a) (d): Coefficients of $T_0$ and $T_1$, respectively, recovered by the LS method. (b) (e): Coefficients of $T_0$ and $T_1$, respectively, recovered by the PartInv method with sparsity parameter $K=2$. (c) (f): Comparisons between the true and reconstructed functions for $T_0$ and $T_1$, respectively. }
    \label{fig:LS_vs_PI_Case3}
    \end{minipage}
    
\end{figure}

\begin{figure}[!htbp]  
    \centering 
    
    \begin{subfigure}{0.28\textwidth}
        \centering
        \includegraphics[width=\linewidth]{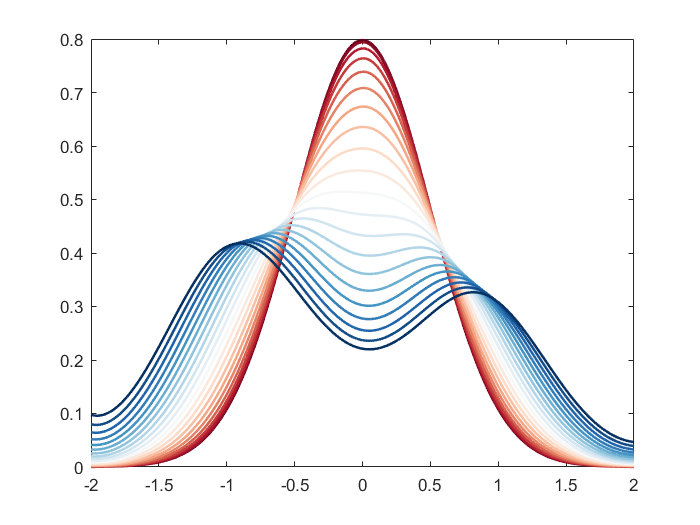}  
        \vspace{-2em} 
        \caption{True solution of $\rho$}
    \end{subfigure}
    \hfill
    \begin{subfigure}{0.28\textwidth}
        \centering
        \includegraphics[width=\linewidth]{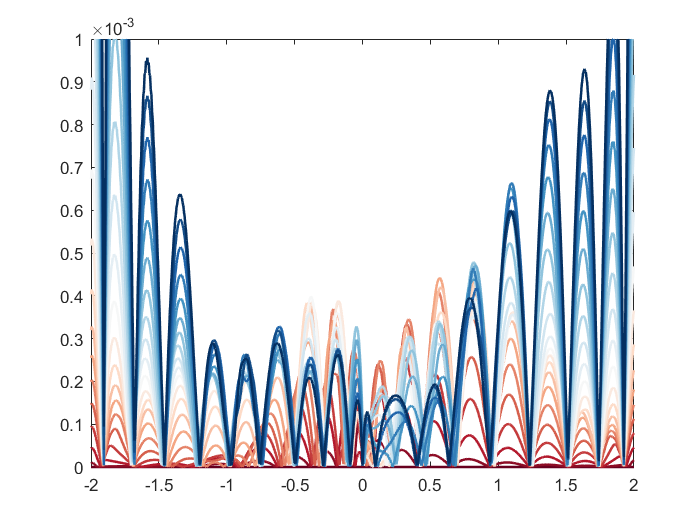}  
        \vspace{-2em} 
        \caption{$E_{mixed}^\rho$ of LS}
    \end{subfigure}
    \hfill
    \begin{subfigure}{0.28\textwidth}
        \centering
        \includegraphics[width=\linewidth]{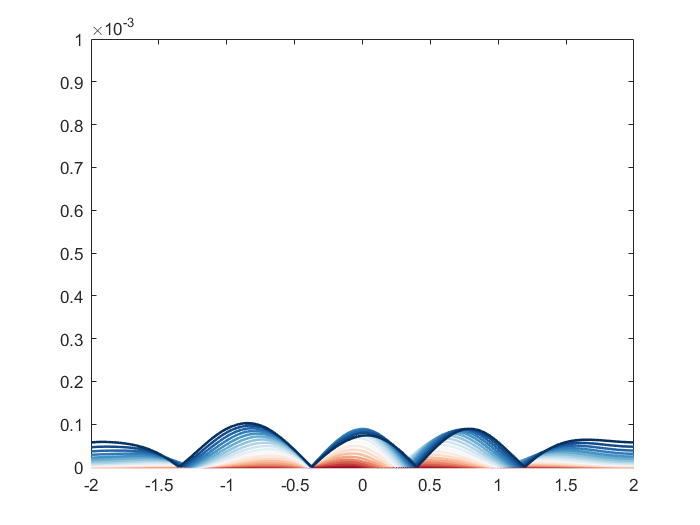}  
        \vspace{-2em}  
        \caption{$E_{mixed}^\rho$ of PartInv}
    \end{subfigure}
    \hfill
    \hspace{-0.7cm}
    \begin{subfigure}{0.06\textwidth}
        \centering
        \includegraphics[width=\linewidth]{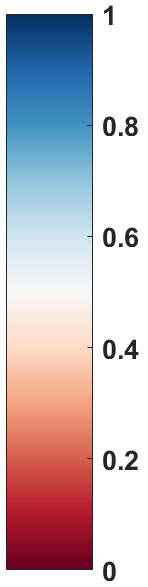} 
        \vspace{-1em}
    \end{subfigure}

    \vspace{0.5em}

    \begin{subfigure}{0.28\textwidth}
        \centering
        \includegraphics[width=\linewidth]{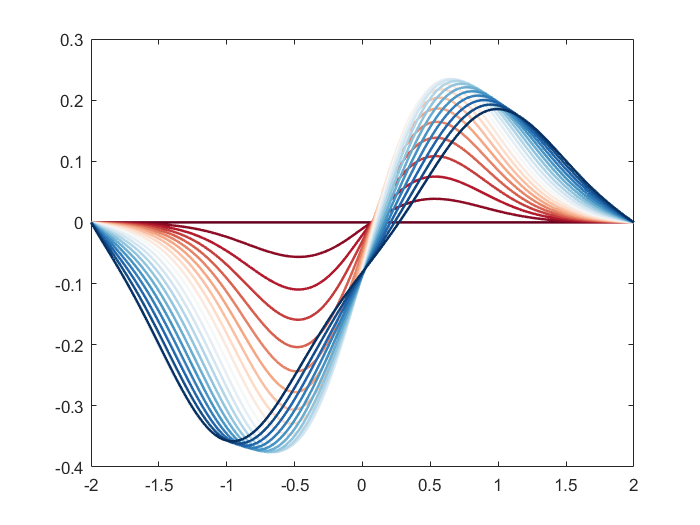}  
        \vspace{-2em}  
        \caption{True solution of $J$}
    \end{subfigure}
    \hfill   
    \begin{subfigure}{0.28\textwidth}
        \centering
        \includegraphics[width=\linewidth]{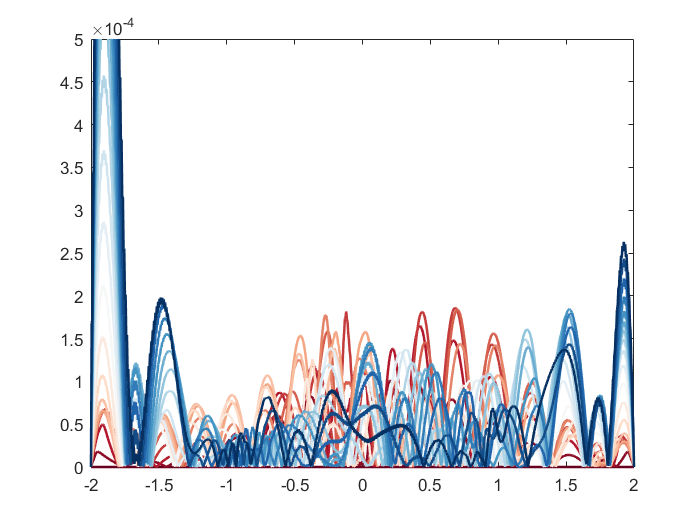}  
        \vspace{-2em}  
        \caption{$E_{mixed}^J$ of LS}
    \end{subfigure}
    \hfill
    \begin{subfigure}{0.28\textwidth}
        \centering
        \includegraphics[width=\linewidth]{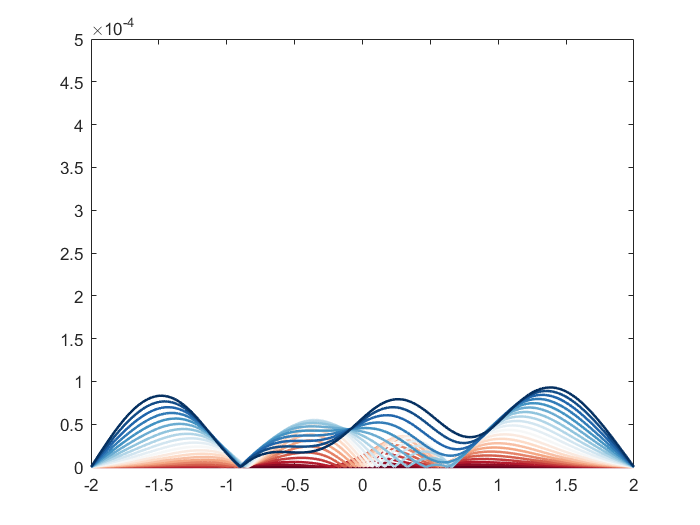}  
        \vspace{-2em}  
        \caption{$E_{mixed}^J$ of PartInv }
    \end{subfigure}
    \hfill
    \hspace{-0.7cm}
    \begin{subfigure}{0.06\textwidth}
        \centering
        \includegraphics[width=\linewidth]{colorbar.png} 
        \vspace{-1em}
    \end{subfigure}

    \begin{minipage}{\textwidth}
    \centering
    \caption{Mixed errors of the forward solutions in Case 3 using noise-free data with $\epsilon=1$. (a) (d): True solutions $\rho$ and $J$, respectively. (b) (e): Mixed errors of $\rho$ and $J$, respectively, obtained using the LS-recovered turning kernels. (c) (f):  Mixed errors obtained using the PartInv-recovered turning kernels. Different color indicates different time.}
    \label{fig:compare_rhoJ_case3}
    \end{minipage}
\end{figure}

\subsubsection{Validation of the Stability Analysis }\label{sec:Stability analysis validation}

This section tests the theoretical stability results in Section \ref{sec:Stability Analysis}. Figure \ref{fig:compare_rhoJ_3cases} shows the forward solution errors for all three test cases with \(\epsilon = 1\), using noise-free data. As seen in Figures \ref{fig:LS_vs_PI_Case1} to \ref{fig:LS_vs_PI_Case3}, PartInv recovers the true \(T_0\) and \(T_1\) very accurately, meaning the loss functionals \(\mathcal{G}[\hat{T}_0]\) and \(\mathcal{E}[\hat{T}_1]\) are extremely small. When these recovered parameters are plugged back into the forward system (Eq. \eqref{twoflux_1}), the resulting density \(\hat{\rho}\) and flux \(\hat{J}\) are very close to the true solutions, with the mixed errors \(E_{\text{mixed}}^\rho\) and \(E_{\text{mixed}}^J\) are on the order of \(10^{-4}\) or smaller. This indicates that \(\|\bar{\rho}(x,T)\|^2_{L^2_x} := \|\rho(x,T) - \hat{\rho}(x,T)\|^2_{L^2_x}\) and \(\|\bar{J}\|^2_{L^2_x} := \|J(x,T) - \hat{J}(x,T)\|^2_{L^2_x}\) are very small.

To show the uniform accuracy with respect to $\epsilon$, Table \ref{Case_123_multiscale} summarizes the recovery performance for different values of \(\epsilon = 1,\ 0.1,\ 0.01\). For all three cases, the loss functionals \(\mathcal{G}[\hat{T}_0]\) and \(\mathcal{E}[\hat{T}_1]\), as well as the forward errors \(\|\bar{\rho}\|^2_{L^2_x}\) and \(\|\bar{J}\|^2_{L^2_x}\), do not change much as \(\epsilon\) decreases from \(1\) to \(0.01\). This shows that the designed loss functions and PartInv work reliably across all regimes — from the kinetic regime (\(\epsilon = O(1)\)) to the challenging diffusion limit (\(\epsilon \to 0\)) — without losing accuracy corroborating the conditional theoretical results of Section 3.

\begin{figure}[!htbp]  
    \centering 
    
    \begin{subfigure}{0.3\textwidth}
        \centering
        \includegraphics[width=\linewidth]{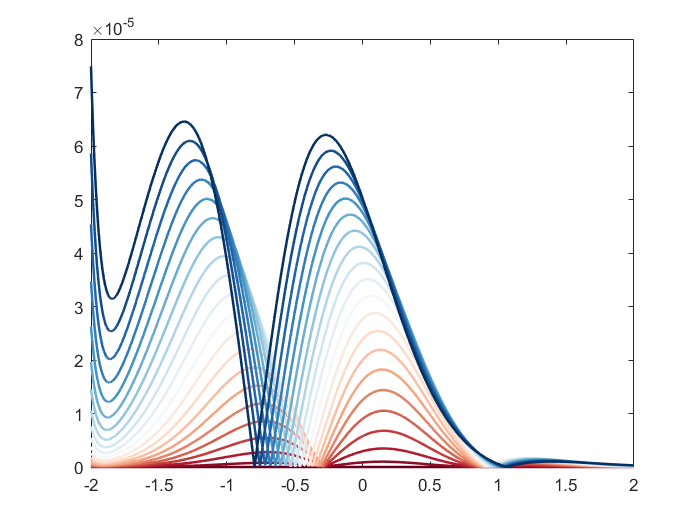} 
        \vspace{-2em}  
        \caption{$E_{mixed}^\rho$ of PartInv in Case 1}
    \end{subfigure}
    \hfill
    \begin{subfigure}{0.3\textwidth}
        \centering
        \includegraphics[width=\linewidth]{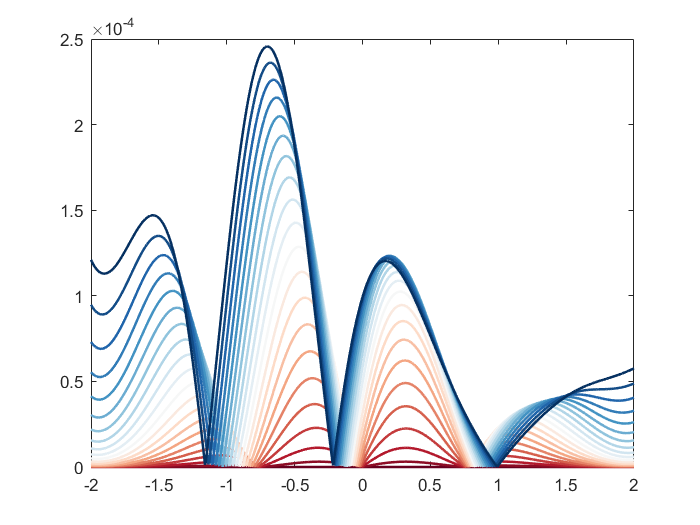}  
        \vspace{-2em}  
        \caption{$E_{mixed}^\rho$ of PartInv in Case 2}
    \end{subfigure}
    \hfill
    \begin{subfigure}{0.3\textwidth}
        \centering
        \includegraphics[width=\linewidth]{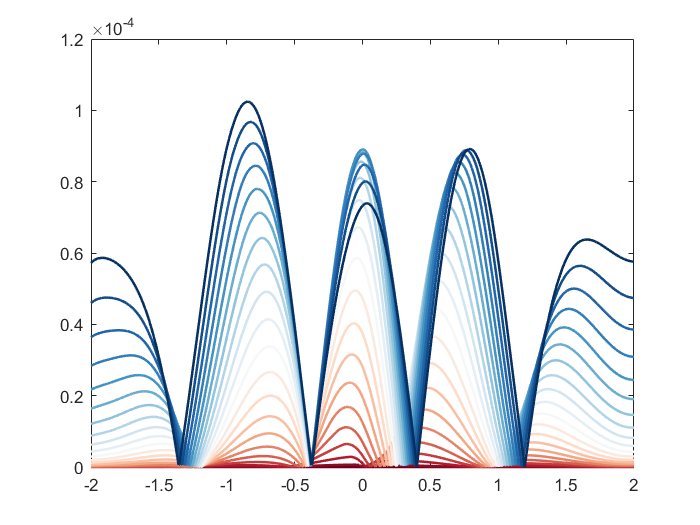}  
        \vspace{-2em}  
        \caption{$E_{mixed}^\rho$ of PartInv in Case 3}
    \end{subfigure}
    \hfill
    \hspace{-0.7cm}
    \begin{subfigure}{0.06\textwidth}
        \centering
        \includegraphics[width=\linewidth]{colorbar.png} 
        \vspace{-0.2em}
    \end{subfigure}

    \vspace{0.5em}

    \begin{subfigure}{0.3\textwidth}
        \centering
        \includegraphics[width=\linewidth]{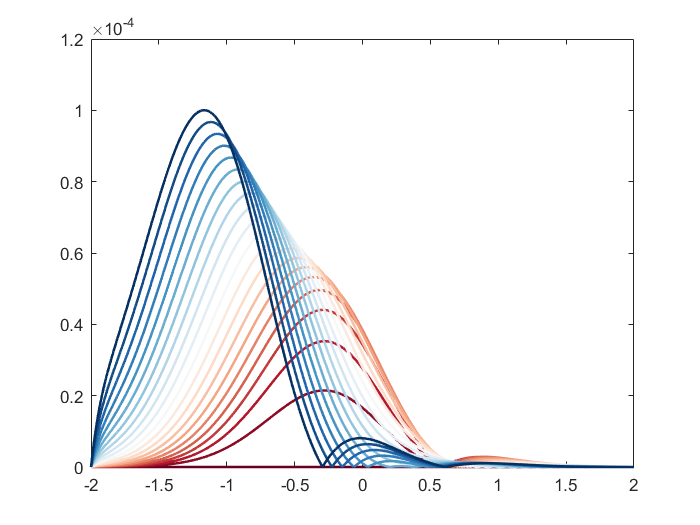} 
        \vspace{-2em} 
        \caption{$E_{mixed}^J$ of PartInv in Case 1}
    \end{subfigure}
    \hfill
    \begin{subfigure}{0.3\textwidth}
        \centering
        \includegraphics[width=\linewidth]{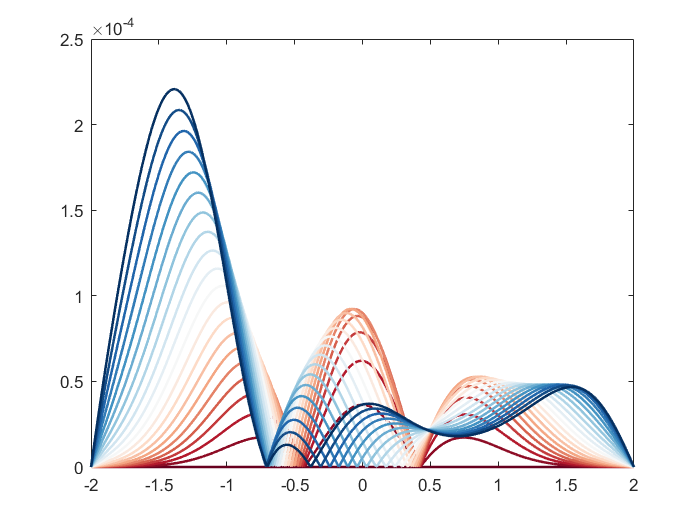}  
        \vspace{-2em}  
        \caption{$E_{mixed}^J$ of PartInv in Case 2}
    \end{subfigure}
    \hfill
    \begin{subfigure}{0.3\textwidth}
        \centering
        \includegraphics[width=\linewidth]{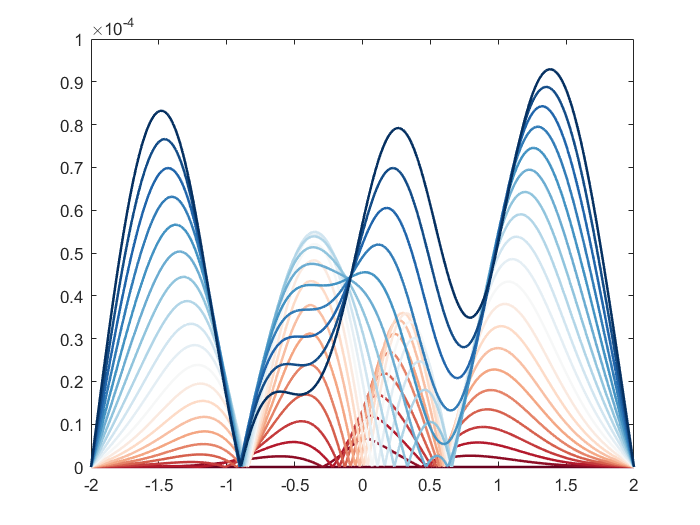}  
        \vspace{-2em}  
        \caption{$E_{mixed}^J$ of PartInv in Case 3}
    \end{subfigure}
    \hfill
    \hspace{-0.7cm}
    \begin{subfigure}{0.06\textwidth}
        \centering
        \includegraphics[width=\linewidth]{colorbar.png} 
        \vspace{-0.2em}
    \end{subfigure}

    \begin{minipage}{\textwidth}
    \centering
    \caption{Mixed errors of the forward solutions reconstructed by PartInv for all three test cases using noise-free data with $\epsilon=1$. The errors remain on the order of $10^{-4}$ or smaller, confirming that the recovered turning kernels reproduce the true forward solutions accurately. (a)--(c): Mixed errors of $\rho$ in Cases 1--3, respectively. (d)--(f): Corresponding mixed errors for $J$. Different color indicates different time.}

    \label{fig:compare_rhoJ_3cases}
    \end{minipage}
\end{figure}

\begin{table}[H]
\small
    \centering
    \begin{tabular}{cccccccccc}
        \hline
        \textbf{Case} & $T=1$ & \textbf{$T_0$: $c_1$} & \textbf{$T_0$: $c_2$} & $\mathcal{G}[\hat{T}_0]$ & $\|\bar{J}\|^2_{L_x^2}$ & \textbf{$T_1$: $c_1$} & \textbf{$T_1$: $c_2$} & $\mathcal{E}[\hat{T}_1]$ & $\|\bar{\rho}\|^2_{L_x^2}$\\
        \hline
        \textbf{Case 1} & \textbf{True solution} & \textbf{3.5} & \textbf{-0.7} &  &  & \textbf{3} & \textbf{-0.6} &  &  \\ 
          & $\epsilon=1$  & 3.5000 & -0.6998 & 2.3E-04 & 1.4E-06 & 2.9998 & -0.5995 & 1.7E-04 & 1.0E-06 \\
          & $\epsilon=0.1$  & 3.5000 & -0.6979 & 2.6E-03 & 6.3E-06 & 2.9992 & -0.5979 & 6.4E-04 & 6.8E-06 \\
          & $\epsilon=0.01$ & 3.5000 & -0.6978 & 2.6E-03 & 6.3E-06 & 2.9992 & -0.5978 & 6.4E-04 & 6.8E-06 \\
        \hline
        \textbf{Case 2} & \textbf{True solution} & \textbf{2.1213} & \textbf{3} &  &  & \textbf{1.4142} & \textbf{2} &  &  \\ 
          & $\epsilon=1$  & 2.1209 & 3.0004 & 2.0E-05 & 6.3E-06 & 1.4043 & 2.0109 & 3.2E-04 & 9.6E-06 \\
          & $\epsilon=0.1$  & 2.1195 & 3.0019 & 6.4E-05 & 1.0E-04 & 1.3482 & 2.0738 & 3.2E-03 & 6.8E-04 \\
          & $\epsilon=0.01$ & 2.1195 & 3.0019 & 6.7E-05 & 1.0E-04 & 1.3487 & 2.0733 & 3.3E-03 & 6.8E-04 \\
        \hline
        \textbf{Case 3} & \textbf{True solution} & \textbf{0.25} & \textbf{0.5} &  &  & \textbf{0.125} & \textbf{0.25} &  &  \\ 
          & $\epsilon=1$  & 0.2511 & 0.4968 & 1.1E-05 & 2.6E-06 & 0.1251 & 0.2495 & 4.2E-05 & 2.9E-06 \\
          & $\epsilon=0.1$  & 0.2574 & 0.4791 & 1.8E-04 & 1.2E-04 & 0.1261 & 0.2457 & 2.7E-04 & 7.9E-05 \\
          & $\epsilon=0.01$ & 0.2573 & 0.4791 & 1.8E-04 & 1.0E-04 & 0.1261 & 0.2456 & 2.7E-04 & 7.7E-05 \\
        \hline
    \end{tabular}
    \caption{Recovery results obtained by PartInv for the three test cases using noise-free data and different values of $\epsilon$. $T_0:c_1$ and $T_0:c_2$ ($T_1:c_1$ and $T_1:c_2$) denote the recovered coefficients $c_1$ and $c_2$ of $T_0$ ($T_1$); $\mathcal{G}[\hat{T}_0]$ ($\mathcal{E}[\hat{T}_1]$) is the value of loss functional for recovering $T_0$ ($T_1$); and $\|\bar{J}\|_{L_x^2}^2$ ($T_1$) is the squared error between the true and reconstructed fluxes (densities) at $T=1$.}
    \label{Case_123_multiscale}
\end{table}

\subsubsection{Robustness results}\label{sec:Robustness results}

To evaluate how well the proposed framework works in practice, this section tests the robustness of the PartInv algorithm against noise in the observations. We carry out two sets of tests: (i) a quantitative assessment using the true data with an additive $5\%$ noise level, and (ii) a statistical analysis of how errors change as the noise level varies.

Table \ref{Case_123_multiscale_noise} summarizes the recovery results for all three test cases under a fixed additive Gaussian noise level of $5\%$. Compared to the noise-free results in Table \ref{Case_123_multiscale}, adding noise naturally leads to a small increase in the loss functionals $\mathcal{G}[\hat{T}_0]$ and $\mathcal{E}[\hat{T}_1]$. Since the noise is on the order of $10^{-1}$, the forward solution errors $\|\bar{J}\|^2_{L_x^2}$ and $\|\bar{\rho}\|^2_{L_x^2}$ also stay around $O(10^{-1})$. However, several key observations confirm that the method is still robust:

\begin{itemize}
    \item \textbf{Accurate Parameter Recovery:} Even with noise, PartInv successfully recovers the true coefficients $c$ with good accuracy. For example, in Case 2 with $\epsilon=0.01$, the estimated coefficients for $T_0$ are $\hat{c}_1 = 2.1398$ and $\hat{c}_2 = 2.9629$, which are close to the true values $c_1 = 2.1213$ and $c_2 = 3$. This shows that the sparsity constraint helps regularize the inverse problem and reduces the impact of measurement noise.
    \item \textbf{Preserved Error-Loss Consistency:} For all three cases, the loss functionals $\mathcal{G}[\hat{T}_0]$ and $\mathcal{E}[\hat{T}_1]$, as well as the forward errors $\|\bar{\rho}\|^2_{L^2_x}$ and $\|\bar{J}\|^2_{L^2_x}$, do not change much as $\epsilon$ decreases from $1$ to $0.01$. This implies that even with noisy data, the forward solution error is still bounded by the optimization loss, confirming the theoretical results in Section \ref{sec:Stability Analysis}.
\end{itemize}

Moreover, Figure \ref{fig:robust_result_case123} shows the relative reconstruction error $E^i_{\text{reconst}} = \frac{\|T_i-\hat{T}_i\|_{L^2_x}}{\|T_i\|_{L^2_x}},\ i=1,2$, as the noise level increases from $0\%$ to $10\%$. Each data point shows the mean and standard deviation over 100 independent trials. The results reveal two robust features: (1) Even at a high noise level of $10\%$, the relative reconstruction error stays below $7\%$ for all cases. This confirms that the PartInv algorithm does not suffer from error blow-up due to noise. (2) The average relative errors for all cases are almost independent of the multiscale parameter $\epsilon$. Whether $\epsilon=1$ (kinetic regime) or $\epsilon=0.01$ (diffusion limit), the average recovery accuracy remains almost the same.

In summary, the robustness tests show that the proposed loss function and PartInv framework is well-suited for real-world applications where measurements are not perfect. It maintains accurate parameter recovery and reliable forward solutions across different scales and noise levels.

\begin{table}[H]
\small
    \centering
    \begin{tabular}{cccccccccc}
        \hline
        \textbf{Case} & $T=1$ & \textbf{$T_0$: $c_1$} & \textbf{$T_0$: $c_2$} & $\mathcal{G}[\hat{T}_0]$ & $\|\bar{J}\|^2_{L_x^2}$ & \textbf{$T_1$: $c_1$} & \textbf{$T_1$: $c_2$} & $\mathcal{E}[\hat{T}_1]$ & $\|\bar{\rho}\|^2_{L_x^2}$\\
        \hline
        \textbf{Case 1} & \textbf{True solution} & \textbf{3.5} & \textbf{-0.7} &  &  & \textbf{3} & \textbf{-0.6} &  &  \\
          & $\epsilon=1$  & 3.5000 & -0.6998 & 1.6E-02 & 1.5E-01 & 2.9998 & -0.5995 & 3.0E-04 & 2.3E-01 \\
          & $\epsilon=0.1$  & 3.5000 & -0.6979 & 2.5E-02 & 1.8E-01 & 2.9992 & -0.5979 & 6.8E-04 & 2.2E-01 \\
          & $\epsilon=0.01$ & 3.4762 & -0.7310 & 2.6E-02 & 1.6E-01 & 2.9992 & -0.5979 & 6.1E-04 & 2.0E-01 \\
        \hline
        \textbf{Case 2} & \textbf{True solution} & \textbf{2.1213} & \textbf{3} & & & \textbf{1.4142} & \textbf{2} & & \\ 
          & $\epsilon=1$  & 2.1248 & 3.0103 & 6.6E-04 & 8.4E-02 & 1.4909 & 1.9152 & 1.3E-03 & 2.1E-01 \\
          & $\epsilon=0.1$  & 2.1272 & 2.9874 & 4.2E-03 & 1.2E-01 & 1.3485 & 2.0735 & 3.2E-03 & 1.8E-01 \\
          & $\epsilon=0.01$ & 2.1398 & 2.9629 & 1.0E-02 & 1.2E-01 & 1.3465 & 2.0758 & 3.5E-03 & 1.7E-01 \\
        \hline
        \textbf{Case 3} & \textbf{True solution} & \textbf{0.25} & \textbf{0.5} & &  & \textbf{0.125} & \textbf{0.25} & & \\ 
          & $\epsilon=1$  & 0.2437 & 0.5238 & 4.3E-04 & 4.9E-02 &  0.1263 & 0.2411 & 1.5E-03 & 1.3E-01\\
          & $\epsilon=0.1$  & 0.2578 & 0.4672 & 8.5E-04 & 1.1E-01 & 0.1260 & 0.2459 & 2.9E-04 & 1.3E-01 \\
          & $\epsilon=0.01$ & 0.2522 & 0.4915 & 1.2E-04 & 1.2E-01 & 0.1262 & 0.2454 & 2.8E-04 & 1.3E-01 \\
        \hline
    \end{tabular}
    \caption{Recovery results obtained by PartInv for the three test cases using $5\%$ noisy data and different values of $\epsilon$. $T_0:c_1$ and $T_0:c_2$ ($T_1:c_1$ and $T_1:c_2$) denote the recovered coefficients $c_1$ and $c_2$ of $T_0$ ($T_1$); $\mathcal{G}[\hat{T}_0]$ ($\mathcal{E}[\hat{T}_1]$) is the value of loss functional for recovering $T_0$ ($T_1$); and $\|\bar{J}\|_{L_x^2}^2$ ($T_1$) is the squared error between the true and reconstructed fluxes (densities) at $T=1$.}

    \label{Case_123_multiscale_noise}
\end{table}

\begin{figure}[!htbp]  
    \centering

    \begin{subfigure}{0.32\textwidth}
        \centering
        \includegraphics[width=\linewidth]{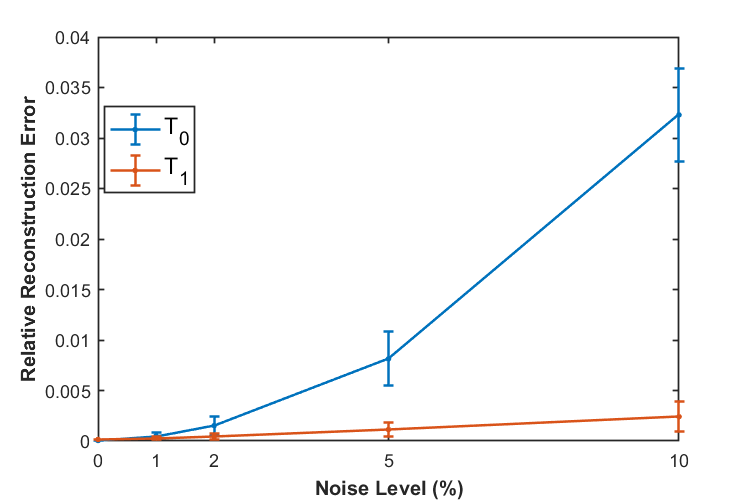}
        \vspace{-2em}  
        \caption{Case 1, $\epsilon = 1$}
    \end{subfigure}
    \hfill
    \begin{subfigure}{0.32\textwidth}
        \centering
        \includegraphics[width=\linewidth]{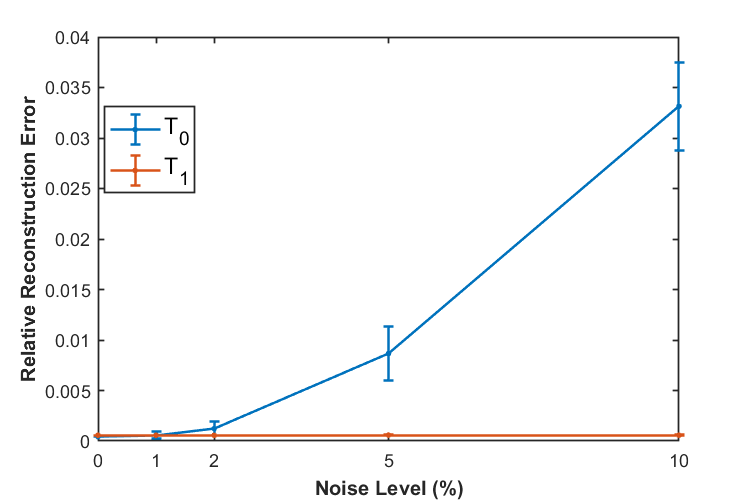}  
        \vspace{-2em}  
        \caption{Case 1, $\epsilon = 0.1$}
    \end{subfigure}
    \hfill
    \begin{subfigure}{0.32\textwidth}
        \centering
        \includegraphics[width=\linewidth]{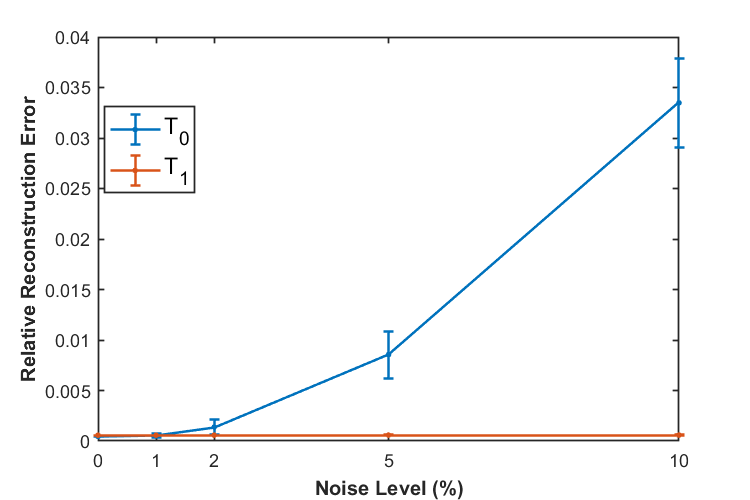}  
        \vspace{-2em}  
        \caption{Case 1, $\epsilon = 0.01$}
    \end{subfigure}

    \vspace{0.5em}

    \begin{subfigure}{0.32\textwidth}
        \centering
        \includegraphics[width=\linewidth]{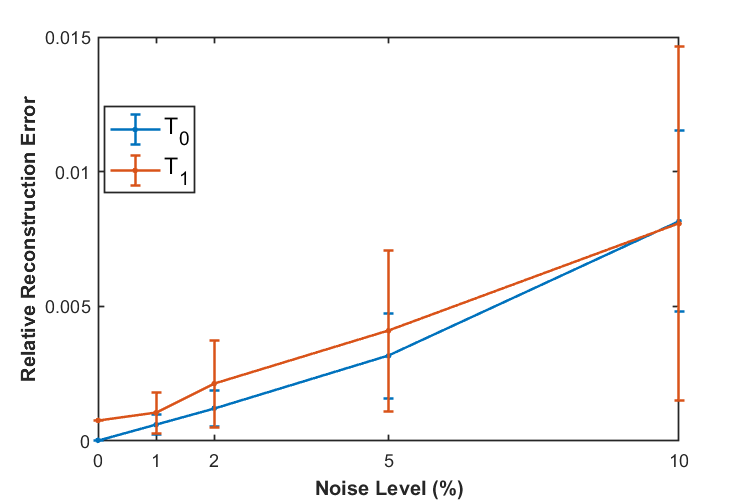}  
        \vspace{-2em}  
        \caption{Case 2, $\epsilon = 1$}
    \end{subfigure}
    \hfill
    \begin{subfigure}{0.32\textwidth}
        \centering
        \includegraphics[width=\linewidth]{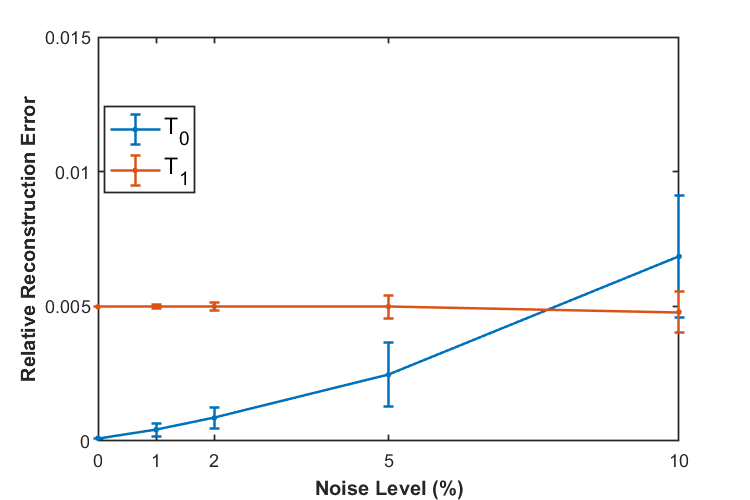}  
        \vspace{-2em}  
        \caption{Case 2, $\epsilon = 0.1$}
    \end{subfigure}
    \hfill
    \begin{subfigure}{0.32\textwidth}
        \centering
        \includegraphics[width=\linewidth]{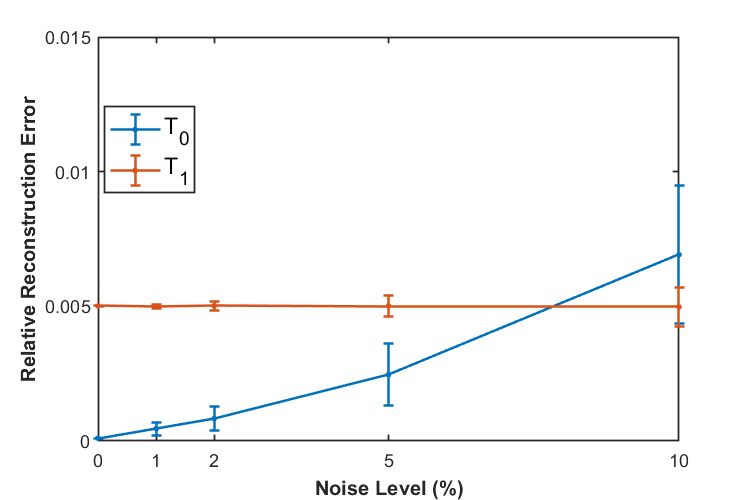}  
        \vspace{-2em} 
        \caption{Case 2, $\epsilon = 0.01$}
    \end{subfigure}

    \vspace{0.5em}

    \begin{subfigure}{0.32\textwidth}
        \centering
        \includegraphics[width=\linewidth]{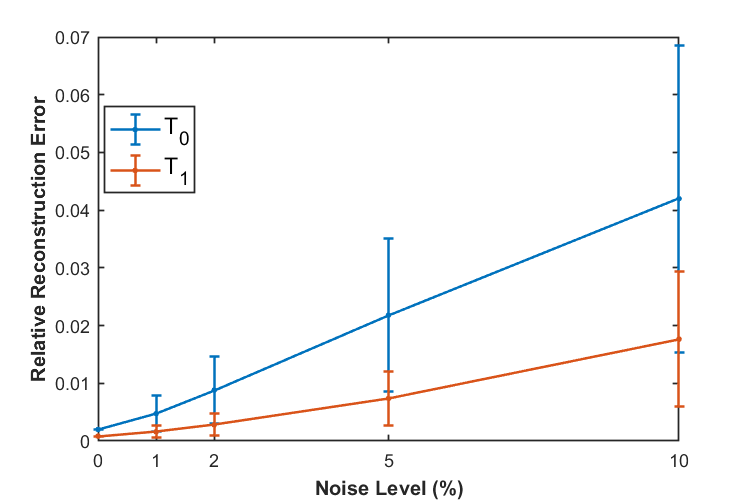}  
        \vspace{-2em}  
        \caption{Case 3, $\epsilon = 1$}
    \end{subfigure}
    \hfill
    \begin{subfigure}{0.32\textwidth}
        \centering
        \includegraphics[width=\linewidth]{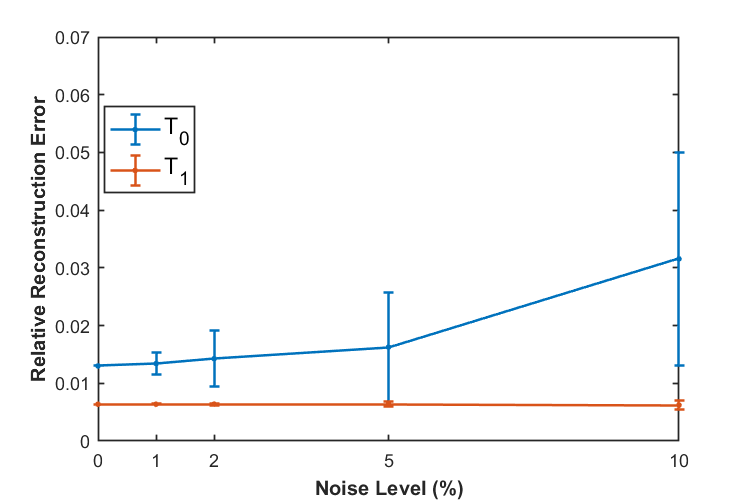}  
        \vspace{-2em}  
        \caption{Case 3, $\epsilon = 0.1$}
    \end{subfigure}
    \hfill
    \begin{subfigure}{0.32\textwidth}
        \centering
        \includegraphics[width=\linewidth]{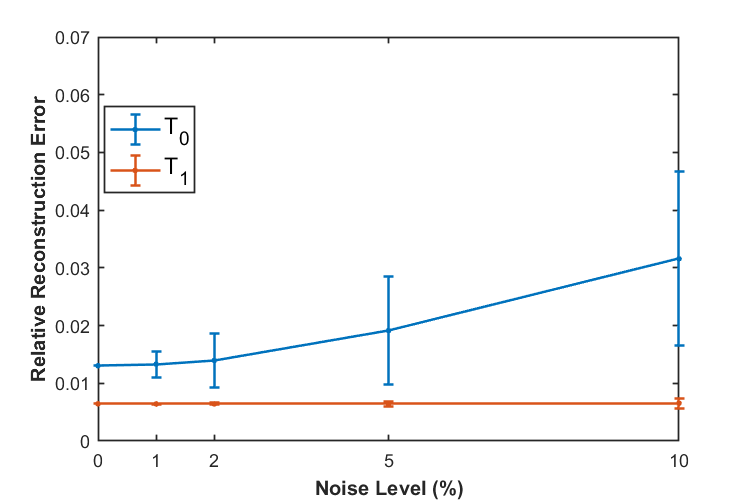}  
        \vspace{-2em} 
        \caption{Case 3, $\epsilon = 0.01$}
    \end{subfigure}
    
    \begin{minipage}{\textwidth}
    \centering
    \caption{Relative reconstruction errors of $T_0$ and $T_1$ obtained by PartInv under different noise levels. Each point and error bar represent the mean and standard deviation over 100 independent trials, respectively. The reconstruction errors remain below $7\%$ and show no significant deterioration as $\epsilon$ decreases. (a)--(c): Case 1 with $\epsilon=1$, $0.1$, and $0.01$, respectively. (d)--(f): Case 2 with $\epsilon=1$, $0.1$, and $0.01$, respectively. (g)--(i): Case 3 with $\epsilon=1$, $0.1$, and $0.01$, respectively.}
    \label{fig:robust_result_case123}
    \end{minipage}

\end{figure}

\subsubsection{Mixed case recovery}

While Sections \ref{sec:Stability analysis validation} and \ref{sec:Robustness results} have shown that the method is stable with respect to the multiscale parameter $\epsilon$ and data noise, real biological environments often have spatial variation in the environments, thus $T_0(x)$ and $T_1(x)$ may vary several orders of magnitude in different spatial regions. To mimic this physical scenario, we design Case 4, where the true $T_0(x)$ and $T_1(x)$ range from $O(10^{-9})$ to $O(1)$:

\noindent \textbf{Case 4:}
\begin{equation*}
        \begin{cases}
            T_0(x) = \frac{2}{\sqrt{0.2\pi}} \exp(-\frac{x^2}{0.2})+\frac{2}{\sqrt{0.16\pi}} \exp(-\frac{x^2}{0.16}),\\
            T_1(x) = \frac{1}{\sqrt{0.2\pi}} \exp(-\frac{x^2}{0.2})+\frac{1}{\sqrt{0.16\pi}} \exp(-\frac{x^2}{0.16}),
        \end{cases}
        \ \text{with }\
        \Psi_k(x) = \frac{\exp(-50 x^2/2k)}{\sqrt{2\pi}},\ k=1, \dots, 10.
    \end{equation*}

The true $T_0(x)$, $T_1(x)$, and the basis functions are shown in Figure \ref{fig:T0T1_case4}. The true coefficient vector $\mathbf{c} = (c_1, c_2, c_3, \dots)^\intercal$ for Case 4 is listed in Table \ref{true_solution_case4}.

\begin{figure}[!htbp]  
    \centering 
    
    \begin{subfigure}{0.48\textwidth}
        \centering
        \includegraphics[width=\linewidth]{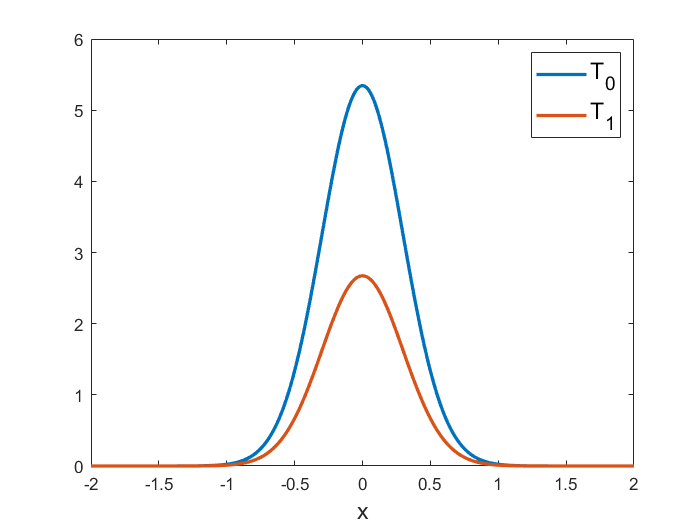}  
        \vspace{-2em}  
        \caption{$T_0\ \&\ T_1$  }
    \end{subfigure}
    \hfill
    \begin{subfigure}{0.48\textwidth}
        \centering
        \includegraphics[width=\linewidth]{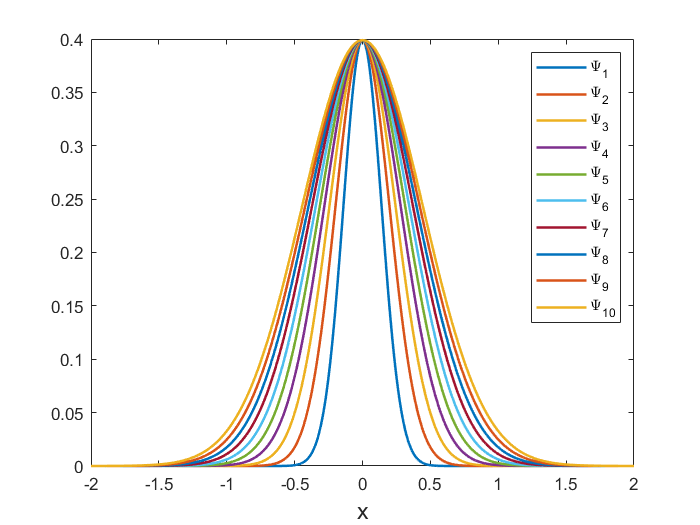}  
        \vspace{-2em}  
        \caption{Basis functions}
    \end{subfigure}
    
    \begin{minipage}{\textwidth}
    \centering
    \caption{True turning-kernel and basis functions used in Case 4, where the functional values in the kernels vary from $O(10^{-9})$ to $O(1)$ across the spatial domain. (a) True $T_0(x)$ and $T_1(x)$, each formed by a linear combination of two Gaussian profiles with different widths. (b) Ten symmetric Gaussian basis functions with different widths used to represent the turning kernels.}
    \label{fig:T0T1_case4}
    \end{minipage}
    
\end{figure}

\begin{table}[H]
    \centering
    \begin{tabular}{c|cc|cc}
        \hline
         & \multicolumn{2}{c|}{$T_0$} & \multicolumn{2}{c}{$T_1$}\\
         \hline
         & $c_4$ & $c_5$ & $c_4$ & $c_5$  \\
         \hline
        \textbf{Case 4}  & 6.3246 & 7.0711 & 3.1623 & 3.5355 \\
        \hline
    \end{tabular}
    \caption{Coefficient vectors $\boldsymbol{c}=(c_1,\ldots,c_{10})$ for $T_0$ and $T_1$ in Case 4. For both turning-kernel components, only $c_4$ and $c_5$ are nonzero, while the remaining coefficients are set to zero.}
    \label{true_solution_case4}
\end{table}
The recovery results are summarized in Table \ref{Case_4_multiscale_noise} and Figure \ref{fig:T0T1_compare_case4}. More specifically, Table \ref{Case_4_multiscale_noise} shows the recovery performance under $5\%$ additive Gaussian noise for $\epsilon = 1,\ 0.1,\ 0.01$. Even with noise and the large variation in the magnitudes of $T_0(x)$ and $T_1(x)$, PartInv still recovers the true coefficients $c$ for both $T_0(x)$ and $T_1(x)$ accurately. Figure 12 shows the reconstructed $T_0(x)$ and $T_1(x)$ for $\epsilon = 0.01$, where the recovered profiles are very close to the true functions. The results for $\epsilon = 1$ and $\epsilon = 0.1$ are similar and we omit the details.

Although $\epsilon$ is fixed, spatial variation in $T_0(x)$ induces a strong multiscale effect. The turning rates $\lambda_{\pm} = (T_0 \pm \epsilon T_1)/\epsilon^2$ yield mean run times $\tau_{\pm} = \epsilon^2/(T_0 \pm \epsilon T_1)$. For $\epsilon=0.01$ and $\epsilon |T_1| \ll T_0$, varying $T_0$ from $\mathcal{O}(1)$ to $\mathcal{O}(10^{-9})$ changes $\tau_{\pm}$ from $\mathcal{O}(10^{-4})$ to $\mathcal{O}(10^{5})$ (and the turning frequency from $\mathcal{O}(10^{4})$ to $\mathcal{O}(10^{-5})$). Hence, this fixed-$\epsilon$ kernel alone creates a heterogeneous environment with both rapid-tumbling and ballistic regions, without requiring a spatially varying $\epsilon(x)$.

\begin{table}[H]
\small
    \centering
    \begin{tabular}{cccccccccc}
        \hline
        \textbf{Case} & $T=1$ & \textbf{$T_0$: $c_4$} & \textbf{$T_0$: $c_5$} & $\mathcal{G}[\hat{T}_0]$ & $\|\bar{J}\|^2_{L_x^2}$ & \textbf{$T_1$: $c_4$} & \textbf{$T_1$: $c_5$} & $\mathcal{E}[\hat{T}_1]$ & $\|\bar{\rho}\|^2_{L_x^2}$\\
        \hline
        \textbf{Case 4} & \textbf{True solution} & \textbf{6.3246} & \textbf{7.0711} &  &  & \textbf{3.1623} & \textbf{3.5355} &  &  \\
          & $\epsilon=1$  & 6.2725 & 7.1063 & 3.5E-03 & 7.5E-02 & 3.1915 & 3.5074 & 4.0E-03 & 1.5E-01 \\
          & $\epsilon=0.1$  & 6.0064 & 7.2711 & 1.3E-02 & 2.5E-01 & 3.1604 & 3.5370 & 1.6E-04 & 1.5E-01 \\
          & $\epsilon=0.01$ & 6.2603 & 7.0208 & 1.3E-02 & 3.7E-01 & 3.1596 & 3.5378 & 1.4E-04 & 1.5E-01 \\
        \hline
    \end{tabular}
    \caption{Recovery results obtained by PartInv for Case 4 using $5\%$ noisy data and different values of $\epsilon$. $T_0$: $c_4$ and $T_0$: $c_5$ ($T_1$: $c_4$ and $T_1$: $c_5$) denote the recovered coefficients $c_4$ and $c_5$ of $T_0$ ($T_1$), $\mathcal{G}[\hat{T}_0]$ ($\mathcal{E}[\hat{T}_1]$) is the value of loss functional for recovering $T_0$ ($T_1$), and $\|\bar{J}\|_{L_x^2}^2$ ($\|\bar{\rho}\|_{L_x^2}^2$) is the squared error between the true and reconstructed fluxes (densities) at $T=1$.}
    \label{Case_4_multiscale_noise}
\end{table}

\begin{figure}[!htbp]  
    \centering 
    
    \begin{subfigure}{0.48\textwidth}
        \centering
        \includegraphics[width=\linewidth]{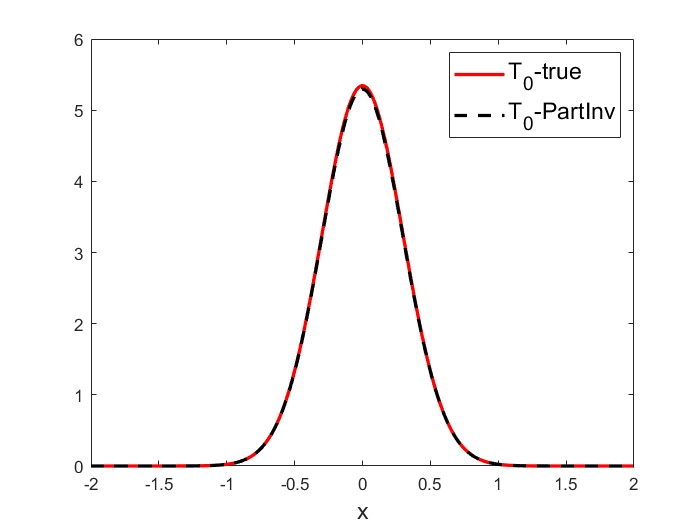}  
        \vspace{-2em}  
        \caption{Comparison of $T_0$  }
    \end{subfigure}
    \hfill
    \begin{subfigure}{0.48\textwidth}
        \centering
        \includegraphics[width=\linewidth]{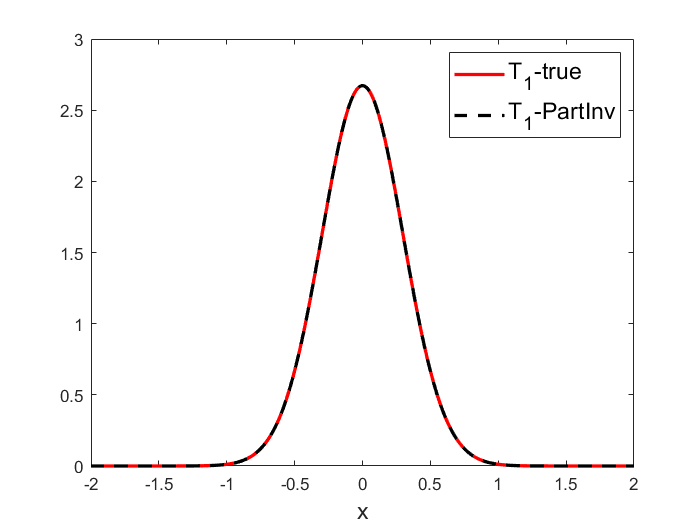}  
        \vspace{-2em}  
        \caption{Comparison of $T_1$}
    \end{subfigure}
    
    \begin{minipage}{\textwidth}
    \centering
    \caption{Recovery results for the turning-kernel components in Case 4 using PartInv with $5\%$ noise and $\epsilon=0.01$. The reconstructed profiles closely match the true functions despite the large spatial variation in their magnitudes. (a): Comparison between the true and reconstructed $T_0(x)$. (b): Comparison between the true and reconstructed $T_1(x)$.}
    \label{fig:T0T1_compare_case4}
    \end{minipage}
    
\end{figure}

\section{Conclusion and Discussion}
\label{sec:Discussion}
This work addresses the inverse problem of identifying parameters in multiscale chemotaxis models. Focusing on a one-dimensional two-flux kinetic model, we develop a variational framework to recover the turning kernel parameters \(T_0(x)\) and \(T_1(x)\) from the kinetic solution. The main contributions are: 1) A novel variational formulation for recovering \(T_0\) and \(T_1\) from solution data; 2)  A formal or conditional uniform stability argument in the whole spatial domain $\mathbb{R}$  showing that the error in density and flux is uniformly controlled by the loss functionals; 3) An effective sparsity-promoting algorithm (PartInv) that is accurate, robust, and efficient, even under noise.

The main theoretical contribution is a conditional scale-uniform loss-to-solution stability argument: under the stated regularity and data-informativeness assumptions, kernel estimates with small variational losses generate density and flux evolutions close to those of the true model across the kinetic-to-diffusive transition. This advantage is crucial when working with real experimental data, because the run-and-tumble process of  bacteria in different spatial domains can have different mean free paths and therefore different values of \(\epsilon\), depending on the external signal \(S(x)\). At the same time, experimental data is always noisy. However, how noisy data and the multiscale parameter affect the recovery of chemotaxis model parameters has rarely been studied.

In our current work, the loss function is designed to recover one parameter when the other is known: specifically, we can recover \(T_1\) given \(T_0\), or recover \(T_0\) given \(T_1\). If one needs to recover \(T_0\) and \(T_1\) simultaneously, a natural approach is to use alternating optimization — that is, iteratively fixing one parameter while optimizing the other. However, this strategy requires further investigation, as it may converge slowly or become trapped in local minima.  Moreover, the identifiability of $T_0$ and $T_1$ is inherently weighted by the observed solution fields. In regions where $J$ is close to zero, the loss functional provides little information about $T_0$, while $T_1$ is weakly constrained where $\rho$ is small. Therefore, reliable reconstruction may require sufficiently informative initial data, multiple experiments, or additional regularization, while we provide here only the loss functions that can handle the multiscale property of the model, and the experimental design is outside the scope of the current work. Moreover,  only the two-flux model is considered here. Extending this framework to higher dimensions would be an interesting direction for future work.

\subsubsection*{Acknowledgments}
All authors were partitially supported by a bilateral project of the Royal Society with the National Chinese Science Foundation NSFC 12411530067. 
  JAC was supported by the Advanced Grant Nonlocal-CPD (Nonlocal PDEs for Complex Particle Dynamics: Phase Transitions, Patterns and Synchronization) of the European Research Council Executive Agency (ERC) under the European Union's Horizon 2020 research and innovation programme (grant agreement No. 883363) and the EPSRC grant number EP/V051121/1.

\section*{Appendix}
\appendix
\section{Lemmas}\label{Appendix_A}

Throughout, let $J_0$ be the flux density corresponding to the limiting system \eqref{limiteqtion} and assume that all quantities below are well-defined. Recall that
\[
\mathcal{G}_0[\hat{T}_0]
=
\frac{1}{T}
\int_0^T\int_{\mathbb{R}}
|\hat{T}_0(x)-T_0(x)|^2J_0^2(x,t)\,dxdt.
\]

\begin{assumption}[Uniform boundedness and positivity]\label{ass:T_0}
There exist constants $0<T_{0,\min}\leq T_{0,\max}<\infty$ such that
\[
T_{0,\min}
\leq T_0(x),\hat{T}_0(x)
\leq T_{0,\max}
\qquad
\text{for almost every }x\in\mathbb{R}.
\]
Consequently,
\[
\left\|\frac{1}{T_0}\right\|_{L^\infty(\mathbb{R})},
\quad
\left\|\frac{1}{\hat{T}_0}\right\|_{L^\infty(\mathbb{R})}
\leq
\frac{1}{T_{0,\min}}.
\]
\end{assumption}

\begin{assumption}[Finite-dimensional admissible space]\label{ass:basis}
$T_0(x)$ and $\hat{T}_0$ are represented using the same prescribed basis functions:
\[
T_0(x)=\sum_{i=1}^{n}c_i\Psi_i(x),
\qquad
\hat{T}_0(x)=\sum_{i=1}^{n}\hat{c}_i\Psi_i(x),
\]
where $\Psi_i\in L^\infty(\mathbb{R})$. Consequently,
\[
T_0(x)-\hat{T}_0(x)
=
\sum_{i=1}^{n}(c_i-\hat{c}_i)\Psi_i(x).
\]
\end{assumption}

\begin{assumption}[Boundedness of the derivative-weighted Gram matrix]\label{ass:derivative_gram}
The limiting flux satisfies
\[
J_0,\partial_tJ_0
\in
L^2\bigl(\mathbb{R}\times(0,T)\bigr),
\]
and
\[
M_J
:=
\int_0^T\int_{\mathbb{R}}
\left(\sum_{i=1}^{n}|\Psi_i(x)|^2\right)
|\partial_tJ_0(x,t)|^2\,dxdt
<\infty.
\]
\end{assumption}
To interpret this condition, define the derivative-weighted Gram matrix
\[
(A_J)_{ij}
=
\int_0^T\int_{\mathbb{R}}
\Psi_i(x)\Psi_j(x)
|\partial_tJ_0(x,t)|^2\,dxdt.
\]
The matrix $A_J$ is symmetric and positive semidefinite, and
\[
\operatorname{trace}(A_J)
=
\sum_{i=1}^{n}(A_J)_{ii}
=
M_J.
\]
Consequently,
\[
\lambda_{\max}(A_J)\leq M_J.
\]
Thus, $M_J$ provides an upper bound for the derivative-weighted Gram matrix and measures the overall magnitude of the temporal variation of $J_0$ observed through the prescribed basis functions.

\begin{assumption}[Positive definiteness of the data-weighted Gram matrix]\label{ass:data_gram}
There exists a constant $\kappa>0$ such that, for every collection of real numbers $d_1,\ldots,d_n$,
\[
\int_0^T\int_{\mathbb{R}}
\left|
\sum_{i=1}^{n}d_i\Psi_i(x)
\right|^2
J_0^2(x,t)\,dxdt
\geq
\kappa\sum_{i=1}^{n}|d_i|^2.
\]
\end{assumption}

Indeed, define the data-weighted Gram matrix
\[
(B_J)_{ij}
=
\int_0^T\int_{\mathbb{R}}
\Psi_i(x)\Psi_j(x)J_0^2(x,t)\,dxdt.
\]
Then
\[
\sum_{i,j=1}^{n}d_i(B_J)_{ij}d_j
=
\int_0^T\int_{\mathbb{R}}
\left|
\sum_{i=1}^{n}d_i\Psi_i(x)
\right|^2
J_0^2(x,t)\,dxdt.
\]
Therefore, this assumption is equivalent to requiring that $B_J$ be positive definite, with
\[
\lambda_{\min}(B_J)\geq\kappa>0.
\]
It ensures that the observed flux $J_0$ contains sufficient information to distinguish every direction in the prescribed finite-dimensional admissible space.

\begin{rmk}
Assumption~\ref{ass:derivative_gram} gives an upper bound for the Gram matrix weighted by $|\partial_tJ_0|^2$, whereas Assumption~\ref{ass:data_gram} gives a positive lower bound for the Gram matrix weighted by $J_0^2$. Their combination allows the weighted quantity involving $\partial_tJ_0$ to be controlled by the loss functional involving $J_0$.
\end{rmk}

\begin{lem}\label{lem:gram_upper}
Under Assumptions~\ref{ass:basis} and \ref{ass:derivative_gram},
\[
\begin{aligned}
\int_0^T\int_{\mathbb{R}}
|T_0(x)-\hat{T}_0(x)|^2
|\partial_tJ_0(x,t)|^2\,dxdt
\leq
M_J\sum_{i=1}^{n}|c_i-\hat{c}_i|^2.
\end{aligned}
\]
\end{lem}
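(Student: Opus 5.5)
The plan is to expand the difference pointwise in the basis and apply the Cauchy--Schwarz inequality in $\mathbb{R}^n$ before integrating. By Assumption~\ref{ass:basis}, for almost every $x$ we have $T_0(x)-\hat{T}_0(x)=\sum_{i=1}^n d_i\Psi_i(x)$ with $d_i:=c_i-\hat{c}_i$. Hence, for each fixed $x$,
\[
|T_0(x)-\hat{T}_0(x)|^2=\Bigl|\sum_{i=1}^n d_i\Psi_i(x)\Bigr|^2\leq \Bigl(\sum_{i=1}^n|d_i|^2\Bigr)\Bigl(\sum_{i=1}^n|\Psi_i(x)|^2\Bigr).
\]
The crucial point is that $\sum_i|d_i|^2$ does not depend on $(x,t)$.

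Next, multiply this pointwise bound by the nonnegative weight $|\partial_tJ_0(x,t)|^2$ and integrate over $\mathbb{R}\times(0,T)$. The constant factor $\sum_i|d_i|^2$ comes out of the integral, and what remains is
\[
\int_0^T\int_{\mathbb{R}}\Bigl(\sum_{i=1}^n|\Psi_i(x)|^2\Bigr)|\partial_tJ_0(x,t)|^2\,dxdt,
\]
which is exactly $M_J$. Assumption~\ref{ass:derivative_gram} guarantees $M_J<\infty$, so the right-hand side is finite and the claimed inequality follows.

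Measurability and integrability need only a brief check. All integrands are nonnegative and measurable: $\Psi_i\in L^\infty$ and $\partial_tJ_0\in L^2$. Monotonicity of the integral therefore applies (Tonelli), with no further justification needed.

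An alternative route, equivalent and matching the remark after Assumption~\ref{ass:derivative_gram}, is the quadratic-form view. Expanding the square gives the left-hand side as $d^\top A_Jd$. Since $A_J$ is positive semidefinite, $d^\top A_Jd\le\lambda_{\max}(A_J)|d|^2\le \operatorname{trace}(A_J)|d|^2=M_J|d|^2$.

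If we take this route, the one step that needs care is the justification that the entries $(A_J)_{ij}$ are finite, so that the expansion of the square and the interchange of the finite sum with the integral are legitimate. This follows from $|\Psi_i\Psi_j|\le\frac12(|\Psi_i|^2+|\Psi_j|^2)$ together with $M_J<\infty$. The first route avoids this issue entirely, so I would present it as the main argument.
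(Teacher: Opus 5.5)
Your main argument is correct and is essentially the paper's proof: you apply Cauchy--Schwarz pointwise to $\sum_i (c_i-\hat{c}_i)\Psi_i(x)$, multiply by $|\partial_tJ_0|^2$, and integrate, which produces exactly $M_J$. Your quadratic-form alternative via $\lambda_{\max}(A_J)\le\operatorname{trace}(A_J)=M_J$ is also valid; it simply repackages the remark the paper makes after Assumption~\ref{ass:derivative_gram}.
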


\begin{proof}
By Assumption~\ref{ass:basis},
\[
T_0(x)-\hat{T}_0(x)
=
\sum_{i=1}^{n}(c_i-\hat{c}_i)\Psi_i(x).
\]
The Cauchy--Schwarz inequality gives
\[
\begin{aligned}
|T_0(x)-\hat{T}_0(x)|^2
=
\left|
\sum_{i=1}^{n}(c_i-\hat{c}_i)\Psi_i(x)
\right|^2
\leq
\left(\sum_{i=1}^{n}|c_i-\hat{c}_i|^2\right)
\left(\sum_{i=1}^{n}|\Psi_i(x)|^2\right).
\end{aligned}
\]
Multiplying both sides by $|\partial_tJ_0(x,t)|^2$ and integrating over $\mathbb{R}\times(0,T)$, we obtain
\[
\begin{aligned}
\int_0^T\int_{\mathbb{R}}
|T_0-\hat{T}_0|^2|\partial_tJ_0|^2\,dxdt
\leq
\left(\sum_{i=1}^{n}|c_i-\hat{c}_i|^2\right)
\int_0^T\int_{\mathbb{R}}
\left(\sum_{i=1}^{n}|\Psi_i|^2\right)
|\partial_tJ_0|^2\,dxdt.
\end{aligned}
\]
The conclusion follows from the definition of $M_J$.
\end{proof}

\begin{lem}\label{main_lemma}
Under Assumptions~\ref{ass:basis}--\ref{ass:data_gram},
\begin{equation}\label{eq:main}
\begin{aligned}
\int_0^T\int_{\mathbb{R}}
|T_0(x)-\hat{T}_0(x)|^2
|\partial_tJ_0(x,t)|^2\,dxdt
\leq
\frac{M_J}{\kappa}
\int_0^T\int_{\mathbb{R}}
|T_0(x)-\hat{T}_0(x)|^2
J_0^2(x,t)\,dxdt.
\end{aligned}
\end{equation}
Equivalently,
\[
\int_0^T
\left\|
(T_0-\hat{T}_0)\partial_tJ_0(\cdot,t)
\right\|_{L_x^2}^2\,dt
\leq
\frac{M_J}{\kappa}
\int_0^T
\left\|
(T_0-\hat{T}_0)J_0(\cdot,t)
\right\|_{L_x^2}^2\,dt.
\]
\end{lem}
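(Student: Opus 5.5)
The plan is to chain Lemma~\ref{lem:gram_upper} with the coercivity supplied by Assumption~\ref{ass:data_gram}. Both sides of \eqref{eq:main} are quadratic forms in the same coefficient difference vector $d=c-\hat c$. The derivative-weighted form is bounded above by $M_J|d|^2$, and the data-weighted form is bounded below by $\kappa|d|^2$. The inequality then follows by comparing the two through $|d|^2$.

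\textbf{Step 1.} By Assumption~\ref{ass:basis}, write $T_0-\hat T_0=\sum_{i=1}^n d_i\Psi_i$ with $d_i=c_i-\hat c_i$. Lemma~\ref{lem:gram_upper}, which uses Assumptions~\ref{ass:basis} and \ref{ass:derivative_gram}, immediately gives
\[
\int_0^T\int_{\mathbb{R}}|T_0-\hat T_0|^2|\partial_tJ_0|^2\,dxdt\le M_J\sum_{i=1}^n|d_i|^2 .
\]

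\textbf{Step 2.} Apply Assumption~\ref{ass:data_gram} with this particular choice $d_i=c_i-\hat c_i$. This yields
\[
\kappa\sum_{i=1}^n|d_i|^2\le\int_0^T\int_{\mathbb{R}}\Big|\sum_i d_i\Psi_i\Big|^2J_0^2\,dxdt=\int_0^T\int_{\mathbb{R}}|T_0-\hat T_0|^2J_0^2\,dxdt .
\]
Since $\kappa>0$, divide by $\kappa$ to bound $|d|^2$ by the data-weighted integral over $\kappa$.

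\textbf{Step 3.} Substitute the bound from Step 2 into Step 1 to get \eqref{eq:main} with constant $M_J/\kappa$. The equivalent formulation follows by Fubini's theorem, writing the double integrals as time integrals of $\|\cdot\|_{L^2_x}^2$. The integrands are nonnegative and the integrals are finite by Assumption~\ref{ass:derivative_gram}, with $\Psi_i\in L^\infty$ and $J_0\in L^2$.

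There is no genuine obstacle here. The only point requiring care is that Assumption~\ref{ass:data_gram} is stated for arbitrary real coefficients, so it applies to the specific difference vector. Note also that Assumption~\ref{ass:T_0} is not needed for this particular inequality; it is included only for consistency with its later use in bounding $\bar J_0$.
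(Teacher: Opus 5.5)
Your proposal is correct and matches the paper's proof: the paper applies Assumption~\ref{ass:data_gram} to the specific vector $d_i=c_i-\hat c_i$ to get $\sum_i|c_i-\hat c_i|^2\le \kappa^{-1}\int_0^T\int_{\mathbb{R}}|T_0-\hat T_0|^2J_0^2\,dxdt$, and then combines this with Lemma~\ref{lem:gram_upper}. Your remark that Assumption~\ref{ass:T_0} plays no role is also consistent with the lemma's hypotheses, which invoke only Assumptions~\ref{ass:basis}--\ref{ass:data_gram}.
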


\begin{proof}
Applying Assumption~\ref{ass:data_gram} with
\[
d_i=c_i-\hat{c}_i,
\qquad i=1,\ldots,n,
\]
gives
\[
\begin{aligned}
\int_0^T\int_{\mathbb{R}}
|T_0(x)-\hat{T}_0(x)|^2J_0^2(x,t)\,dxdt
\geq
\kappa\sum_{i=1}^{n}|c_i-\hat{c}_i|^2.
\end{aligned}
\]
Therefore,
\[
\sum_{i=1}^{n}|c_i-\hat{c}_i|^2
\leq
\frac{1}{\kappa}
\int_0^T\int_{\mathbb{R}}
|T_0-\hat{T}_0|^2J_0^2\,dxdt.
\]
Combining this estimate with Lemma~\ref{lem:gram_upper}, we obtain
\[
\begin{aligned}
\int_0^T\int_{\mathbb{R}}
|T_0-\hat{T}_0|^2|\partial_tJ_0|^2\,dxdt
\leq
\frac{M_J}{\kappa}
\int_0^T\int_{\mathbb{R}}
|T_0-\hat{T}_0|^2J_0^2\,dxdt,
\end{aligned}
\]
which proves \eqref{eq:main}.
\end{proof}

\begin{lem}\label{lemma_forJ}
Under the assumptions of \textbf{Theorem 2 (\romannumeral2)} and Assumptions~\ref{ass:T_0}--\ref{ass:data_gram}, if $\hat{T}_1=T_1$, then
\[
\begin{aligned}
\|J_0(x,T)-\hat{J}_0(x,T)\|_{L_x^2}^2
\leq
C_4^0
\|J_0(x,0)-\hat{J}_0(x,0)\|_{L_x^2}^2
+
C_6^0\mathcal{G}_0[\hat{T}_0],
\end{aligned}
\]
where $C_4^0$ and $C_6^0$ are independent of $\epsilon$.
\end{lem}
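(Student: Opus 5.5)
The plan is to start from Theorem 2 (ii) with $\hat T_1=T_1$. The $\partial_xJ_0$ term then becomes
\[
\Big(\tfrac{vT_1}{T_0}-\tfrac{vT_1}{\hat T_0}\Big)\partial_xJ_0 = vT_1\,\frac{\hat T_0-T_0}{T_0\hat T_0}\,\partial_xJ_0 ,
\]
and the forcing in the $L^2$ energy estimate is
\[
D=(T_0-\hat T_0)\,\frac{1}{T_0\hat T_0}\Big(-vT_1\partial_xJ_0+\tfrac{v^2}{2}\partial_x^2J_0\Big).
\]
The aim is to trade the spatial derivatives of $J_0$ for a time derivative using the limiting equation \eqref{equation_J}, which reads
\[
-v^2\partial_x^2J_0=-2T_0\partial_tJ_0+2vT_1\partial_xJ_0 .
\]
Rearranging gives
\[
\tfrac{v^2}{2}\partial_x^2J_0 - vT_1\partial_xJ_0 = T_0\,\partial_tJ_0 .
\]
Hence, exactly,
\[
D = (T_0-\hat T_0)\,\frac{T_0}{T_0\hat T_0}\,\partial_tJ_0=\frac{T_0-\hat T_0}{\hat T_0}\,\partial_tJ_0 .
\]
This identity is the key step, and it is where the hypothesis $\hat T_1=T_1$ is used. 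I will verify the signs carefully against \eqref{equation _of_difference_of_J}, since the cancellation is exact only with the correct signs.

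Next, by Assumption \ref{ass:T_0}, $\|1/\hat T_0\|_{L^\infty}\le 1/T_{0,\min}$, so
\[
\int_0^T\!\!\int_{\mathbb R}D^2\,dx\,dt\le T_{0,\min}^{-2}\int_0^T\!\!\int_{\mathbb R}|T_0-\hat T_0|^2|\partial_tJ_0|^2\,dx\,dt .
\]
Then apply Lemma \ref{main_lemma}, whose hypotheses are Assumptions \ref{ass:basis}--\ref{ass:data_gram}. It bounds the right-hand side by
\[
\frac{M_J}{\kappa}\int_0^T\!\!\int_{\mathbb R}|T_0-\hat T_0|^2J_0^2\,dx\,dt=\frac{M_JT}{\kappa}\,\mathcal G_0[\hat T_0].
\]
The first paragraph already established that the integrand in Theorem 2 (ii) equals $D$, so $D\in L^2$ follows from Assumption \ref{ass:derivative_gram}. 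This means Theorem 2 (ii) applies.

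Substituting into its conclusion gives
\[
\|\bar J_0(T)\|^2\le C_4^0\|\bar J_0(0)\|^2+C_5^0\,\frac{M_JT}{\kappa\,T_{0,\min}^2}\,\mathcal G_0[\hat T_0].
\]
So I set $C_6^0=C_5^0M_JT/(\kappa T_{0,\min}^2)$. None of these constants involves $\epsilon$: $C_4^0$ and $C_5^0$ depend only on $T$ and the $L^\infty$ bounds on $\hat T_0$ (Theorem 2 (ii)), and $M_J,\kappa,T_{0,\min}$ are data for the limiting problem.

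The main obstacle is the algebraic identity, which needs $J_0$ regular enough to satisfy \eqref{equation_J} pointwise or in $L^2$. If a stray term survives the cancellation, for example from the ordering of the coefficient differences, I would bound it by the same mechanism. Each such term has the form $(T_0-\hat T_0)\times$(a bounded factor)$\times$(a derivative of $J_0$), and handling it would require an analogous Gram-matrix upper bound for that derivative.
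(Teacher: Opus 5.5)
Your argument is the paper's own proof. With $\hat T_1=T_1$, both coefficient differences share the factor $\frac{1}{T_0}-\frac{1}{\hat T_0}$, and the limiting equation \eqref{equation_J} turns the spatial-derivative combination into $T_0\partial_tJ_0$. Then $1/\hat T_0\le 1/T_{0,\min}$, Lemma~\ref{main_lemma} and Theorem~2~(ii) give exactly $C_6^0=C_5^0TM_J/(\kappa T_{0,\min}^2)$.

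However, the key identity as you wrote it contains two sign slips that happen to cancel.

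\begin{itemize}
\item Since $\frac{v^2}{2T_0}-\frac{v^2}{2\hat T_0}=\frac{v^2}{2}\frac{\hat T_0-T_0}{T_0\hat T_0}$, your bracket in $D$ (with your prefactor $\frac{T_0-\hat T_0}{T_0\hat T_0}$) should be $-vT_1\partial_xJ_0-\frac{v^2}{2}\partial_x^2J_0$, not $-vT_1\partial_xJ_0+\frac{v^2}{2}\partial_x^2J_0$.
\item Dividing \eqref{equation_J} by $-2$ gives $\frac{v^2}{2}\partial_x^2J_0+vT_1\partial_xJ_0=T_0\partial_tJ_0$, not the version with $-vT_1\partial_xJ_0$.
\end{itemize}

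The correct computation is
\[
D=\frac{\hat T_0-T_0}{T_0\hat T_0}\Bigl(vT_1\partial_xJ_0+\frac{v^2}{2}\partial_x^2J_0\Bigr)=\frac{\hat T_0-T_0}{\hat T_0}\,\partial_tJ_0 .
\]
This differs from your final formula only by an overall sign, which is irrelevant because only $D^2$ enters. So the cancellation is exact, and the fallback in your last paragraph (extra Gram-type bounds for other derivatives of $J_0$) is not needed.
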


\begin{proof}
Taking the derivative with respect to $t$ on both sides of Eq.~\eqref{limiteqtion_2}, we obtain
\[
v\partial_t\partial_x\rho_0
=
-2T_0(x)\partial_tJ_0
-2T_1(x)\partial_t\rho_0.
\]
By Eq.~\eqref{limiteqtion_1}, we have
\[
\partial_t\rho_0=-v\partial_xJ_0,
\]
then,
\[
-v^2\partial_x^2J_0
=
-2T_0(x)\partial_tJ_0
+
2vT_1(x)\partial_xJ_0.
\]
Equivalently,
\[
T_0(x)\partial_tJ_0
=
vT_1(x)\partial_xJ_0
+
\frac{v^2}{2}\partial_x^2J_0.
\]

Since $\hat{T}_1=T_1$, it follows that
\begin{align}
&
\int_0^T\int_{\mathbb{R}}
\left(
\left(
\frac{vT_1}{T_0}
-
\frac{v\hat{T}_1}{\hat{T}_0}
\right)\partial_xJ_0
+
\left(
\frac{v^2}{2T_0}
-
\frac{v^2}{2\hat{T}_0}
\right)\partial_x^2J_0
\right)^2
dxdt
\nonumber\\
=&
\int_0^T\int_{\mathbb{R}}
\left(
\left(
\frac{1}{T_0}
-
\frac{1}{\hat{T}_0}
\right)
\left(
vT_1\partial_xJ_0
+
\frac{v^2}{2}\partial_x^2J_0
\right)
\right)^2
dxdt
\nonumber\\
=&
\int_0^T\int_{\mathbb{R}}
\left(
\frac{\hat{T}_0-T_0}{\hat{T}_0}
\partial_tJ_0
\right)^2
dxdt
\nonumber\\
\leq&
\frac{1}{T_{0,\min}^2}
\int_0^T\int_{\mathbb{R}}
|\hat{T}_0-T_0|^2
|\partial_tJ_0|^2\,dxdt.
\label{eq:forcing_estimate}
\end{align}

Then, by \textup{\textbf{Theorem 2 (\romannumeral2)}} and \eqref{eq:forcing_estimate},
\begin{align}
\|J_0(x,T)-\hat{J}_0(x,T)\|_{L_x^2}^2
\leq{}&
C_4^0
\|J_0(x,0)-\hat{J}_0(x,0)\|_{L_x^2}^2 
+
\frac{C_5^0}{T_{0,\min}^2}
\int_0^T\int_{\mathbb{R}}
|\hat{T}_0-T_0|^2
|\partial_tJ_0|^2\,dxdt.
\label{eq:J0_before_gram}
\end{align}

Applying Lemma~\ref{main_lemma} to the last term gives
\[
\begin{aligned}
\int_0^T\int_{\mathbb{R}}
|\hat{T}_0-T_0|^2|\partial_tJ_0|^2\,dxdt
\leq
\frac{M_J}{\kappa}
\int_0^T\int_{\mathbb{R}}
|\hat{T}_0-T_0|^2J_0^2\,dxdt
=
\frac{TM_J}{\kappa}\mathcal{G}_0[\hat{T}_0].
\end{aligned}
\]
Substituting this estimate into \eqref{eq:J0_before_gram}, we obtain
\[
\begin{aligned}
\|J_0(x,T)-\hat{J}_0(x,T)\|_{L_x^2}^2
\leq
C_4^0
\|J_0(x,0)-\hat{J}_0(x,0)\|_{L_x^2}^2
+
C_6^0\mathcal{G}_0[\hat{T}_0],
\end{aligned}
\]
where
\[
C_6^0
=
\frac{C_5^0TM_J}{\kappa T_{0,\min}^2}.
\]
The constants $M_J$ and $\kappa$ depend only on the limiting flux $J_0$ and the prescribed basis functions. Therefore, they are independent of $\epsilon$, and hence $C_6^0$ is also independent of $\epsilon$.
\end{proof}

\section{AP scheme}\label{Appendix_B}
In this section, we provide the details of the numerical discretization for \eqref{twoflux_1},  which is based on an operator splitting method. Following \cite{carrillo2013asymptotic}, the system in \eqref{twoflux_1} is solved by first applying an implicit Euler discretization to the source part for one time step:
\begin{equation}\label{source_part}
    \begin{gathered}
       \partial_t\rho = 0,\\
      \partial_tJ = -\frac{2}{\epsilon^2}(T_0J+T_1 \rho) + v\left(1 - \frac{1}{\epsilon^2} \right)\partial_x\rho,
    \end{gathered}
\end{equation}
and then solving the transport part for one time step:
\begin{equation}\label{transport_part}
    \begin{gathered}
        \partial_t\rho + v\partial_x J = 0,\\
        \partial_t J + v\partial_x \rho = 0,
    \end{gathered}
\end{equation}
using an explicit upwind scheme. 

More specifically, let $\Delta t$ and $\Delta x$ be respectively the temporal and spatial mesh sizes; $\rho_m^l\approx\rho(x_m,t_l)$; $J_m^l\approx J(x_m,t_l)$.  
For the $l$th time step, the full discretization takes the form:

One first solves
\begin{equation*}
    \begin{gathered}
        \frac{\rho_m^* - \rho_m^l}{\Delta t} = 0,\nonumber\\
        \frac{J_m^* - J_m^l}{\Delta t} = -\frac{2}{\epsilon^2}(T_{0,m}J_m^*+T_{1,m} \rho_m^*) + v\left(1 - \frac{1}{\epsilon^2} \right)\frac{\rho_{m+1}^* - \rho_{m-1}^*}{2\Delta x}.\nonumber
    \end{gathered}
\end{equation*}
Let $\partial_x^{(c)}\rho_m^* := \dfrac{\rho_{m+1}^* - \rho_{m-1}^*}{2\Delta x}$. Then $\rho^*$ and $J^*$ can be computed explicitly from the known values at time step $l$ as
\begin{equation*}
    \begin{gathered}
        \rho_m^* = \rho_m^l,\nonumber\\
        J_m^* = \frac{\epsilon^2J_m^l-2\Delta t\, T_{1,m}\rho_m^l + v(\epsilon^2 - 1)\Delta t\partial_x^{(c)}\rho_m^l}{\epsilon^2+2\Delta tT_{0,m}}.\nonumber
    \end{gathered}
\end{equation*}
Then, we apply a first-order upwind scheme to Eq. \eqref{transport_part} to obtain $\rho_m^{l+1}$ and $J_m^{l+1}$:
\begin{equation}
    \begin{gathered}
        \frac{\rho_m^{l+1}-\rho_m^*}{\Delta t} + \frac{v}{2\Delta x} (J_{m+1}^* - J_{m-1}^*)=\frac{v}{2\Delta x} (\rho_{m+1}^* - 2\rho_{m}^* + \rho_{m-1}^*),\nonumber\\
        \frac{J_m^{l+1}-J_m^*}{\Delta t} + \frac{v}{2\Delta x} (\rho_{m+1}^* - \rho_{m-1}^*)=\frac{v}{2\Delta x} (J_{m+1}^* - 2J_{m}^* + J_{m-1}^*).
    \end{gathered}
\end{equation}
It has been shown in \cite{carrillo2013asymptotic} that this scheme is uniformly accurate and stable in $\epsilon$.

\bibliographystyle{abbrv} 
\bibliography{ref}

@article{willard2006signaling,
  title={Signaling pathways mediating chemotaxis in the social amoeba, Dictyostelium discoideum},
  author={Willard, Stacey S and Devreotes, Peter N},
  journal={European journal of cell biology},
  volume={85},
  number={9-10},
  pages={897--904},
  year={2006},
  publisher={Elsevier}
}

@article{de2016neutrophil,
  title={Neutrophil migration in infection and wound repair: going forward in reverse},
  author={De Oliveira, Sofia and Rosowski, Emily E and Huttenlocher, Anna},
  journal={Nature Reviews Immunology},
  volume={16},
  number={6},
  pages={378--391},
  year={2016},
  publisher={Nature Publishing Group UK London}
}

@article{keller1970initiation,
  title={Initiation of slime mold aggregation viewed as an instability},
  author={Keller, Evelyn F and Segel, Lee A},
  journal={Journal of theoretical biology},
  volume={26},
  number={3},
  pages={399--415},
  year={1970},
  publisher={Elsevier}
}

@article{stroock1974some,
  title={Some stochastic processes which arise from a model of the motion of a bacterium},
  author={Stroock, Daniel W},
  journal={Zeitschrift f{\"u}r Wahrscheinlichkeitstheorie und verwandte Gebiete},
  volume={28},
  number={4},
  pages={305--315},
  year={1974},
  publisher={Springer}
}

@article{alt1980biased,
  title={Biased random walk models for chemotaxis and related diffusion approximations},
  author={Alt, Wolgang},
  journal={Journal of mathematical biology},
  volume={9},
  number={2},
  pages={147--177},
  year={1980},
  publisher={Springer}
}

@article{othmer1988models,
  title={Models of dispersal in biological systems},
  author={Othmer, Hans G and Dunbar, Steven R and Alt, Wolfgang},
  journal={Journal of mathematical biology},
  volume={26},
  number={3},
  pages={263--298},
  year={1988},
  publisher={Springer}
}

@article{othmer2000diffusion,
  title={The diffusion limit of transport equations derived from velocity-jump processes},
  author={Othmer, Hans G and Hillen, Thomas},
  journal={SIAM Journal on Applied Mathematics},
  volume={61},
  number={3},
  pages={751--775},
  year={2000},
  publisher={SIAM}
}

@article{othmer2002diffusion,
  title={The diffusion limit of transport equations II: Chemotaxis equations},
  author={Othmer, Hans G and Hillen, Thomas},
  journal={SIAM Journal on Applied Mathematics},
  volume={62},
  number={4},
  pages={1222--1250},
  year={2002},
  publisher={SIAM}
}

@article{hellmuth2021multiscale,
  title={Multiscale convergence of the inverse problem for chemotaxis in the Bayesian setting},
  author={Hellmuth, Kathrin and Klingenberg, Christian and Li, Qin and Tang, Min},
  journal={Computation},
  volume={9},
  number={11},
  pages={119},
  year={2021},
  publisher={MDPI}
}

@article{saragosti2011directional,
  title={Directional persistence of chemotactic bacteria in a traveling concentration wave},
  author={Saragosti, J. and Calvez, V. and Bournaveas, N. and Perthame, B. and Buguin, A. and Silberzan, P.},
  journal={Proc. Natl. Acad. Sci. U.S.A.},
  volume={108},
  number={39},
  pages={16235--16240},
  year={2011},
  publisher={National Academy of Sciences}
}

@article{fister2008identification,
  title={Identification of a chemotactic sensitivity in a coupled system},
  author={Fister, K Renee and McCarthy, Maeve L},
  journal={Mathematical medicine and biology: a journal of the IMA},
  volume={25},
  number={3},
  pages={215--232},
  year={2008},
  publisher={OUP}
}

@article{carrillo2025sparse,
    AUTHOR = {Carrillo, Jos\'e{} A. and Estrada-Rodriguez, Gissell and
              Mikol\'as, L\'aszl\'o{} and Tang, Sui},
     TITLE = {Sparse identification of nonlocal interaction kernels in
              nonlinear gradient flow equations via partial inversion},
   JOURNAL = {Math. Models Methods Appl. Sci.},
  FJOURNAL = {Mathematical Models and Methods in Applied Sciences},
    VOLUME = {35},
      YEAR = {2025},
    NUMBER = {5},
     PAGES = {1073--1131},
      ISSN = {0218-2025,1793-6314},
   MRCLASS = {65M32 (35Q70 35R30 65F22 70-08 70F17)},
  MRNUMBER = {4896516},
       DOI = {10.1142/S0218202525500137},
       URL = {https://doi.org/10.1142/S0218202525500137},
}

@article{lang2023identifiability,
  title={Identifiability of interaction kernels in mean-field equations of interacting particles},
  author={Lang, Quanjun and Lu, Fei},
  journal={Foundations of Data Science},
  volume={5},
  number={4},
  pages={480--502},
  year={2023},
  publisher={Foundations of Data Science}
}

@article{lu2021learning,
  title={Learning interaction kernels in heterogeneous systems of agents from multiple trajectories},
  author={Lu, Fei and Maggioni, Mauro and Tang, Sui},
  journal={Journal of Machine Learning Research},
  volume={22},
  number={32},
  pages={1--67},
  year={2021}
}

@article{li2021identifiability,
  title={On the identifiability of interaction functions in systems of interacting particles},
  author={Li, Zhongyang and Lu, Fei and Maggioni, Mauro and Tang, Sui and Zhang, Cheng},
  journal={Stochastic Processes and their Applications},
  volume={132},
  pages={135--163},
  year={2021},
  publisher={Elsevier}
}

@article{tang2024identifiability,
  title={On the Identifiability of Nonlocal Interaction Kernels in First-Order Systems of Interacting Particles on Riemannian Manifolds},
  author={Tang, Sui and Tuerkoen, Malik and Zhou, Hanming},
  journal={SIAM Journal on Applied Mathematics},
  volume={84},
  number={5},
  pages={2067--2086},
  year={2024},
  publisher={SIAM}
}

@article{CTPortoErcole,
    AUTHOR = {Carrillo, J. A. and Toscani, G.},
     TITLE = {Contractive probability metrics and asymptotic behavior of
              dissipative kinetic equations},
   JOURNAL = {Riv. Mat. Univ. Parma (7)},
  FJOURNAL = {Rivista di Matematica della Universit\`a{} di Parma. Serie 7},
    VOLUME = {6},
      YEAR = {2007},
     PAGES = {75--198},
      ISSN = {0035-6298},
   MRCLASS = {82C40 (35B40 35F20 60B10 82-01)},
  MRNUMBER = {2355628},
MRREVIEWER = {Carlo\ Cercignani},
}

@article{CGT99,
    AUTHOR = {Carlen, E. A. and Gabetta, E. and Toscani, G.},
     TITLE = {Propagation of smoothness and the rate of exponential
              convergence to equilibrium for a spatially homogeneous
              {M}axwellian gas},
   JOURNAL = {Comm. Math. Phys.},
  FJOURNAL = {Communications in Mathematical Physics},
    VOLUME = {199},
      YEAR = {1999},
    NUMBER = {3},
     PAGES = {521--546},
      ISSN = {0010-3616,1432-0916},
   MRCLASS = {82C40 (76P05)},
  MRNUMBER = {1669689},
MRREVIEWER = {Carlo\ Cercignani},
       DOI = {10.1007/s002200050511},
       URL = {https://doi.org/10.1007/s002200050511},
}

@article{keller1971model,
  title={Model for chemotaxis},
  author={Keller, Evelyn F and Segel, Lee A},
  journal={Journal of theoretical biology},
  volume={30},
  number={2},
  pages={225--234},
  year={1971},
  publisher={Elsevier}
}

@article{keller1971traveling,
  title={Traveling bands of chemotactic bacteria: a theoretical analysis},
  author={Keller, Evelyn F and Segel, Lee A},
  journal={Journal of theoretical biology},
  volume={30},
  number={2},
  pages={235--248},
  year={1971},
  publisher={Elsevier}
}

@article{chalub2004kinetic,
  title={Kinetic models for chemotaxis and their drift-diffusion limits},
  author={Chalub, Fabio ACC and Markowich, Peter A and Perthame, Beno{\^\i}t and Schmeiser, Christian},
  journal={Monatshefte f{\"u}r Mathematik},
  volume={142},
  number={1},
  pages={123--141},
  year={2004},
  publisher={Springer}
}

@article{giometto2015generalized,
  title={Generalized receptor law governs phototaxis in the phytoplankton Euglena gracilis},
  author={Giometto, Andrea and Altermatt, Florian and Maritan, Amos and Stocker, Roman and Rinaldo, Andrea},
  journal={Proceedings of the National Academy of Sciences},
  volume={112},
  number={22},
  pages={7045--7050},
  year={2015},
  publisher={National Academy of Sciences}
}

@article{karalashvili2011identification,
  title={Identification of transport coefficient models in convection-diffusion equations},
  author={Karalashvili, Maka and Gro{\ss}, Sven and Marquardt, Wolfgang and Mhamdi, Adel and Reusken, Arnold},
  journal={SIAM Journal on Scientific Computing},
  volume={33},
  number={1},
  pages={303--327},
  year={2011},
  publisher={SIAM}
}

@article{lai2019inverse,
  title={Inverse problems for the stationary transport equation in the diffusion scaling},
  author={Lai, Ru-Yu and Li, Qin and Uhlmann, Gunther},
  journal={SIAM Journal on Applied Mathematics},
  volume={79},
  number={6},
  pages={2340--2358},
  year={2019},
  publisher={SIAM}
}

@article{bal2008inverse,
  title={Inverse transport with isotropic sources and angularly averaged measurements},
  author={Bal, Guillaume and Langmore, Ian and Monard, Fran{\c{c}}ois},
  journal={Inverse Probl. Imaging},
  volume={2},
  number={1},
  pages={23--42},
  year={2008}
}

@article{chen2018stability,
  title={Stability of inverse transport equation in diffusion scaling and Fokker--Planck limit},
  author={Chen, Ke and Li, Qin and Wang, Li},
  journal={SIAM Journal on Applied Mathematics},
  volume={78},
  number={5},
  pages={2626--2647},
  year={2018},
  publisher={SIAM}
}

@article{hellmuth2025reconstructing,
  title={Reconstructing the kinetic chemotaxis kernel using macroscopic data: well-posedness and ill-posedness},
  author={Hellmuth, Kathrin and Klingenberg, Christian and Li, Qin and Tang, Min},
  journal={SIAM Journal on Applied Mathematics},
  volume={85},
  number={2},
  pages={613--635},
  year={2025},
  publisher={SIAM}
}

@article{ahmed2008experimental,
  title={Experimental verification of the behavioral foundation of bacterial transport parameters using microfluidics},
  author={Ahmed, Tanvir and Stocker, Roman},
  journal={Biophysical journal},
  volume={95},
  number={9},
  pages={4481--4493},
  year={2008},
  publisher={Elsevier}
}

@article{perez2022microfluidic,
  title={Microfluidic devices for studying bacterial taxis, drug testing and biofilm formation},
  author={P{\'e}rez-Rodr{\'\i}guez, Sandra and Garc{\'\i}a-Aznar, Jos{\'e} Manuel and Gonzalo-Asensio, Jes{\'u}s},
  journal={Microbial Biotechnology},
  volume={15},
  number={2},
  pages={395--414},
  year={2022},
  publisher={Wiley Online Library}
}

@article{carrillo2013asymptotic,
  title={An asymptotic preserving scheme for the diffusive limit of kinetic systems for chemotaxis},
  author={Carrillo, Jos{\'e} A and Yan, Bokai},
  journal={Multiscale Modeling \& Simulation},
  volume={11},
  number={1},
  pages={336--361},
  year={2013},
  publisher={SIAM}
}

@article{kalwarczyk2012biologistics,
  title={Biologistics—diffusion coefficients for complete proteome of Escherichia coli},
  author={Kalwarczyk, Tomasz and Tabaka, Marcin and Holyst, Robert},
  journal={Bioinformatics},
  volume={28},
  number={22},
  pages={2971--2978},
  year={2012},
  publisher={Oxford University Press}
}

@book{ikeda2014stochastic,
  title={Stochastic differential equations and diffusion processes},
  author={Ikeda, Nobuyuki and Watanabe, Shinzo},
  volume={24},
  year={2014},
  publisher={Elsevier}
}

@book{carmona2016lectures,
  title={Lectures on BSDEs, stochastic control, and stochastic differential games with financial applications},
  author={Carmona, Ren{\'e}},
  year={2016},
  publisher={SIAM}
}

@article{berg1972chemotaxis,
  title={Chemotaxis in Escherichia coli analysed by three-dimensional tracking},
  author={Berg, Howard C and Brown, Douglas A},
  journal={nature},
  volume={239},
  number={5374},
  pages={500--504},
  year={1972},
  publisher={Nature Publishing Group UK London}
}

@article{bal2010inverse,
  title={Inverse diffusion theory of photoacoustics},
  author={Bal, Guillaume and Uhlmann, Gunther},
  journal={Inverse Problems},
  volume={26},
  number={8},
  pages={085010},
  year={2010}
}

@article{lai2022inverse,
  title={Inverse transport and diffusion problems in photoacoustic imaging with nonlinear absorption},
  author={Lai, Ru-Yu and Ren, Kui and Zhou, Ting},
  journal={SIAM Journal on Applied Mathematics},
  volume={82},
  number={2},
  pages={602--624},
  year={2022},
  publisher={SIAM}
}

@article{calvez2015confinement,
  title={Confinement by biased velocity jumps: aggregation of Escheria coli},
  author={Calvez, Vincent and Raoul, Gael and Schmeiser, Christian},
  journal={Kinetic and Related Models},
  volume={8},
  number={4},
  pages={651--666},
  year={2015},
  publisher={Kinetic and Related Models}
}

@article{si2014pathway,
  title={A pathway-based mean-field model for E. coli chemotaxis: Mathematical derivation and its hyperbolic and parabolic limits},
  author={Si, Guangwei and Tang, Min and Yang, Xu},
  journal={Multiscale Modeling \& Simulation},
  volume={12},
  number={2},
  pages={907--926},
  year={2014},
  publisher={SIAM}
}

@article{sun2017macroscopic,
  title={Macroscopic limits of pathway-based kinetic models for E. coli chemotaxis in large gradient environments},
  author={Sun, Weiran and Tang, Min},
  journal={Multiscale Modeling \& Simulation},
  volume={15},
  number={2},
  pages={797--826},
  year={2017},
  publisher={SIAM}
}

@article{chen2014guaranteed,
  title={Guaranteed sparse signal recovery with highly coherent sensing matrices},
  author={Chen, Guangliang and Divekar, Atul and Needell, Deanna},
  journal={Sampling Theory in Signal and Image Processing},
  volume={13},
  number={1},
  pages={91--109},
  year={2014},
  publisher={Springer}
}

@article{xue2021individual,
  title={Individual based models exhibiting l{\'e}vy-flight type movement induced by intracellular noise},
  author={Xue, Xiaoru and Tang, Min},
  journal={Journal of Mathematical Biology},
  volume={83},
  number={3},
  pages={27},
  year={2021},
  publisher={Springer}
}

@article{xue2025crossover,
  title={Crossover from ballistic transport to normal diffusion: a kinetic view},
  author={Xue, Zhe and Sun, Weiran and Zhou, Zhennan and Tang, Min},
  journal={arXiv preprint arXiv:2501.02240},
  year={2025}
}

@article{patteson2015running,
  title={Running and tumbling with E. coli in polymeric solutions},
  author={Patteson, Alison E and Gopinath, Arvind and Goulian, Mark and Arratia, Paulo E},
  journal={Scientific reports},
  volume={5},
  number={1},
  pages={15761},
  year={2015},
  publisher={Nature Publishing Group UK London}
}

\end{document}